\theoremstyle{plain}
\newtheorem{theorem}{Theorem}[section]
\newtheorem{corollary}[theorem]{Corollary}
\newtheorem{proposition}[theorem]{Proposition}
\newtheorem{lemma}[theorem]{Lemma}
\newtheorem{claim}{Claim}
\theoremstyle{definition}
\newtheorem{definition}[theorem]{Definition}
\theoremstyle{remark}
\newtheorem{remark}[theorem]{Remark}
\numberwithin{equation}{section}
\newcommand{\R}{\mathbb{R}}
\newcommand{\N}{\mathbb{N}}
\newcommand{\E}{\mathbb{E}}
\newcommand{\W}{\mathbb{W}}
\newcommand{\F}{\mathscr{F}}
\newcommand{\B}{\mathscr{B}}
\renewcommand{\P}{\mathbb{P}}
\newcommand{\norm}[1]{\|#1\|}
\newcommand{\bignorm}[1]{\big\|#1\big\|}
\newcommand{\biggnorm}[1]{\bigg\|#1\bigg\|}
\newcommand{\indicator}{\mathbbm{1}}
\newcommand{\BV}{BV}
\newcommand{\meas}{\mathscr{M}}
\newcommand{\loc}{\mathrm{loc}}
\newcommand{\radon}{\mathscr{M}}
\newcommand{\initial}{\mathrm{in}}
\newcommand{\Pas}{\P\text{-a.s}}
\newcommand{\gronwconst}{E}
\newcommand{\setsep}{;\,}
\newcommand{\semimartingale}{semi\-martin\-gale\xspace}
\newcommand{\semimartingales}{semi\-martin\-gales\xspace}
\newcommand{\diffeomorphism}{diffeo\-morph\-ism\xspace}
\newcommand{\diffeomorphisms}{diffeo\-morph\-isms\xspace}
\newcommand{\componentwise}{component\-wise\xspace}
\DeclareMathOperator{\dvrg}{div}
\title[Stochastic flows for H\"older drifts]{Stochastic flows for H\"older drifts and transport/continuity equations with noise}
\email{magnusco@math.uio.no}
\date{\today}
\author{Magnus C. {\O}rke}
\begin{document}

\begin{abstract}
    We prove existence of a stochastic flow of \diffeomorphisms generated by SDEs with drift in $L^q_t C^{0, \alpha}_x$ for any $q \in [2, \infty)$ and $\alpha \in (0, 1)$. This result is achieved using a Zvonkin-type transformation for the SDE. As a key intermediate step, well-posedness and optimal regularity for a class of parabolic PDEs related to the transformation is established. Using the existence of a differentiable stochastic flow, we prove well-posedness of $\BV_\loc$-solutions of stochastic transport equations and weak solutions of stochastic continuity equations with so-called transport noise and velocity fields in $L^q_t C^{0, \alpha}_x$. For both equations, solutions may fail to be unique in the deterministic setting.
\end{abstract}

\maketitle

\section{Introduction}

Let ${(\Omega, \F, \P)}$ be a probability space and $(W_t)_{t \geq 0}$ a $d$-dimensional Brownian motion with respect to a given complete and right-continuous filtration $(\F_t)_{t \geq 0}$. For a bounded time interval $[0, T]$, we assume that $b\colon \R^d \times [0, T] \to \R^d$ is a function which satisfies the hypothesis
\begin{equation} \label{eq:h1}
    \tag{H1}
    b \in L^q\bigl((0, T); C^{0, \alpha}(\R^d; \R^d)\bigr) \qquad \text{for}\ q \in [2, \infty),\ \alpha \in (0, 1).
\end{equation}
Under these conditions, we shall be concerned with the well-posedness of the following three related equations: First, the ordinary stochastic differential equation (SDE)
\begin{equation} \label{eq:sde}
    \begin{cases}
        dX_t = b_t(X_t)\, dt + dW_t & \text{for}\ t > s,\\
        X_s = x,
    \end{cases}
\end{equation}
with $x \in \R^d$. Here, the question is not only whether the SDE is well-posed, but also if it generates a unique stochastic flow of diffeomorphisms on $\R^d$. Second, the stochastic transport equation (STE)
\begin{equation} \label{eq:stoch_transport_strong}
    d u + b \cdot \nabla u\, dt + \nabla u \circ dW_t = 0, \qquad u_{t = 0} = u_{\initial},
\end{equation}
where $u_\initial \colon \R^d \to \R$ is a function of locally bounded variation. We use the notation $b \cdot \nabla u = \sum_{i = 1}^d b^i \partial_i u$ and $\nabla u \circ dW = \sum_{i = 1}^d \partial_i u \circ dW^i_t$, where $\partial_i u \circ dW^i_t$ denotes Stratonovich integration. Thirdly, the stochastic continuity equation (SCE)
\begin{equation} \label{eq:stoch_continuity_strong}
    d \mu + \nabla \cdot (b \mu)\, dt + \nabla \mu \circ dW_t = 0, \qquad \mu_{t = 0} = \mu_\initial,
\end{equation}
where $\mu_\initial$ is a locally finite (signed) measure on $\R^d$, and $\nabla \cdot (b \mu) = \sum_{i = 1}^d \partial_i (b^i \mu)$. The ways in which we define solutions for these equations will be explained below, but let us briefly note that we seek probabilistically strong solutions, in the usual integral sense with respect to time, with weak (measure-valued) derivatives in space for the STE, and distributional derivatives in space for the SCE.

\subsection{Main results} Our main result is a well-posedness theory for the three equations \eqref{eq:sde}--\eqref{eq:stoch_continuity_strong} under hypothesis \eqref{eq:h1}. The principal tool for studying the STE and the STC in this setting is the stochastic flow generated by the SDE, provided by the following theorem.

\begin{theorem} \label{thm:main_flow}
    Assume that $b$ satisfies \eqref{eq:h1}. Then the SDE~\eqref{eq:sde} generates a stochastic flow of $C^{1, \beta}$-\diffeomorphisms $X = X_{s, t}(x, \omega)$, defined for $s, t \in [0, T]$, on~$\R^d$, for all $\beta < \alpha$. More precisely, we have the following:
    \begin{enumerate}
        \item[\textit{(i)}] For any $x \in \R^d$ and $s \in [0, T]$, the SDE \eqref{eq:sde} has a unique (strong) solution $(X_t^{s, x})_{t \in [s, T]}$. The family of solutions ${(X_t^{s, x}\setsep 0 \leq s \leq t \leq T,\, x \in \R^d)}$ has a modification, denoted by $X = X_{s, t}(x, \omega)$, which for all $\beta < \alpha$ is a forward stochastic flow of $C^{1, \beta}$-\diffeomorphisms on $\R^d$. Let $(b^{n})_{n \in \N}$ be a convergent sequence of functions such that
        \begin{equation} \label{eq:convergent_coeff}
            (b^{n}) \subset L^q_t C^{0, \alpha}_x; \qquad \lim_{n \to \infty} b^n = b\quad \text{in}\quad L^q_t C^{0, \alpha'}_x
        \end{equation}
        for some $\alpha' > 0$, with corresponding forward flows $(X^{n})_{n \in \N}$. Then for any $p > 0$,
        \begin{align}
            & \lim_{n \to \infty} \sup_{x \in \R^d} \sup_{s \in (0, T)}\E \biggl[\sup_{t \in (s, T)} |X_{s, t}^n(x) - X_{s, t}(x)|^p\biggr] = 0, \label{eq:forward_stability} \\
            & \lim_{n \to \infty} \sup_{x \in \R^d} \sup_{s \in (0, T)} \E \biggl[\sup_{t \in (s, T)} \norm{\nabla X_{s, t}^n(x) - \nabla X_{s, t}(x)}^p\biggr] = 0. \label{eq:forward_gradient_stability}
        \end{align}

        \item[\textit{(ii)}] For any $x \in \R^d$ and $t \in [0, T]$, the backward SDE
        \begin{equation} \label{eq:backward_sde}
            X_s^{x, t} = x - \int_s^t b_r(X_r^{x, t})\, dr - (W_t - W_s)
        \end{equation}
        has a unique (strong) solution $(X_s^{x, t})_{s \in [0, t]}$, and the family of solutions ${(X_s^{x, t}(x)\setsep 0 \leq s \leq t \leq T,\, x \in \R^d)}$ has a modification, denoted by ${X^{-1} = X_{s, t}^{-1}(x, \omega)}$, which is a backward stochastic flow of $C^{1, \beta}$-\diffeomorphisms on~$\R^d$ for all $\beta < \alpha$. It is $\P$-a.s.~the inverse of the forward flow $X$. If $\bigl((X^{n})^{-1}\bigr)_{n \in \N}$ are backward flows corresponding to a convergent sequence of coefficients $(b_n)_{n \in \N}$ as in \eqref{eq:convergent_coeff}, then
        \begin{align*}
            & \lim_{n \to \infty} \sup_{x \in \R^d} \sup_{t \in (0, T)} \E \biggl[\sup_{s \in (0, t)} |(X_{s, t}^{n})^{-1}(x) - X_{s, t}^{-1}(x)|^p\biggr] = 0, \\ %\label{eq:backward_stability}
            & \lim_{n \to \infty} \sup_{x \in \R^d} \sup_{t \in (0, T)} \E \biggl[\sup_{s \in (0, t)} \norm{\nabla (X_{s, t}^{n})^{-1}(x) - \nabla X_{s, t}^{-1}(x)}^p\biggr] = 0. %\label{eq:backward_gradient_stability}
        \end{align*}
        for any $p > 0$.
    \end{enumerate}
\end{theorem}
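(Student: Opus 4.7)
My plan is to follow the Zvonkin transformation approach. For a parameter $\lambda > 0$ to be chosen large, let $u\colon [0, T] \times \R^d \to \R^d$ be the \componentwise solution of the parabolic backward problem
\begin{equation*}
    \partial_t u + \tfrac{1}{2} \Delta u + b \cdot \nabla u - \lambda u = -b, \qquad u(T, \cdot) = 0.
\end{equation*}
Under \eqref{eq:h1}, the PDE theory announced in the abstract should supply a solution $u$ with $u \in L^\infty\bigparanth{(0, T) \times \R^d}$, $\nabla u \in L^\infty_t C^{0, \beta}_x$ for every $\beta < \alpha$, and crucially with $\norm{\nabla u}_{L^\infty_{t, x}}$ made arbitrarily small by taking $\lambda$ sufficiently large. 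Setting $\phi_t(x) := x + u_t(x)$, the map $\phi_t$ is therefore a global $C^{1, \beta}$-\diffeomorphism of $\R^d$ for every $t$, with inverse $\psi_t := \phi_t^{-1}$ of the same regularity.

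Next, I would apply a generalized Itô formula (justified since $u$ has two spatial weak derivatives in a suitable integrability class) to $Y_t := \phi_t(X_t)$, converting \eqref{eq:sde} into
\begin{equation*}
    dY_t = \lambda u_t(\psi_t(Y_t))\, dt + \bigparanth{I + \nabla u_t}(\psi_t(Y_t))\, dW_t, \qquad Y_s = \phi_s(x).
\end{equation*}
The drift and diffusion here are bounded and Lipschitz in space, uniformly in time, so Kunita's classical theory produces a unique strong solution and a stochastic flow $Y_{s, t}$ of $C^{1, \beta}$-\diffeomorphisms. Setting $X_{s, t}(x) := \psi_t\bigparanth{Y_{s, t}(\phi_s(x))}$ yields the desired forward flow, which is a $C^{1, \beta}$-\diffeomorphism as a composition of three such maps. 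Pathwise uniqueness for \eqref{eq:sde} follows because any strong solution yields, via the same Itô formula, a solution of the $Y$-SDE, for which pathwise uniqueness is classical.

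For the stability statements \eqref{eq:forward_stability}--\eqref{eq:forward_gradient_stability}, I plan to apply the PDE estimate to the difference $u - u^n$, which satisfies the linear parabolic equation with right-hand side controlled by $b - b^n$ together with the lower-order perturbation $(b - b^n) \cdot \nabla u^n$. Convergence $b^n \to b$ in $L^q_t C^{0, \alpha'}_x$ then yields $\phi^n \to \phi$ and $\nabla \phi^n \to \nabla \phi$ uniformly on $[0, T] \times \R^d$, with analogous convergence for the inverses $\psi^n$. Standard stability for SDEs with uniformly bounded Lipschitz coefficients (Burkholder--Davis--Gundy plus Gronwall) gives convergence of $Y^n$ and $\nabla Y^n$ in every $L^p(\Omega)$, uniformly in $(s, t, x)$. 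The decomposition $X^{n}_{s, t}(x) = \psi_t^n(Y_{s, t}^n(\phi_s^n(x)))$, together with the chain rule for $\nabla X^n$, then transfers these bounds to the original flow.

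The backward statement \textit{(ii)} is handled by the analogous forward-in-time PDE with reversed roles of initial and terminal data and appropriate sign adjustments; the identification $X_{s, t}^{-1} = \psi_t \circ Y_{s, t}^{-1} \circ \phi_s$ yields the $\P$-a.s.~inversion property from the corresponding property of the $Y$-flow. The main obstacle, as signaled in the abstract, lies in the PDE input: establishing the optimal Schauder-type estimate giving $u \in L^\infty_t C^{1, \beta}_x$ with $\norm{\nabla u}_\infty$ arbitrarily small for large $\lambda$, when the drift is only in $L^q_t C^{0, \alpha}_x$. Once that linchpin estimate is in hand, the probabilistic steps---generalized Itô for the transformation, passage to the flow of \diffeomorphisms, and stability under approximation---follow by technical but essentially standard arguments.
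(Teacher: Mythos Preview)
Your approach matches the paper's, but there is a genuine gap in the regularity you claim for the transformed diffusion. You assert that the coefficients of the $Y$-SDE are ``bounded and Lipschitz in space, uniformly in time,'' yet the regularity you state for $u$, namely $\nabla u \in L^\infty_t C^{0,\beta}_x$, yields only a H\"older diffusion $(I+\nabla u)\circ\psi$, which is \emph{not} enough for Kunita's $C^{1,\beta}$-flow theorem. What is actually required---and what the paper's PDE analysis delivers---is the second-order estimate $u \in L^q_t C^{2,\alpha}_x$, so that $\tilde\sigma = (I+\nabla u)\circ\psi \in L^q_t C^{1,\alpha}_x$. This is Lipschitz in space, but with a Lipschitz constant that is only $L^q$-integrable in time, not uniformly bounded. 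Kunita's theory still applies under this hypothesis, but your stability step (BDG plus Gronwall) then needs an additional time-change argument to reduce to the uniformly bounded case, which the paper carries out explicitly.

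On the backward flow: you correctly anticipate a separate forward-in-time PDE, but your identification formula $X_{s,t}^{-1} = \psi_t \circ Y_{s,t}^{-1} \circ \phi_s$ reverts to the \emph{forward} transformation maps and the forward $Y$-flow (and has the subscripts reversed; inverting $\psi_t \circ Y_{s,t} \circ \phi_s$ gives $\psi_s \circ Y_{s,t}^{-1} \circ \phi_t$). The paper instead builds an independent backward flow $\hat X_{s,t} = \hat g_s^{-1} \circ \hat Y_{s,t} \circ \hat g_t$ from the backward Zvonkin transformation, and then identifies $\hat X$ with $X^{-1}$ by checking directly that $X_{s,t}^{-1}$ satisfies \eqref{eq:backward_sde} and invoking the pathwise uniqueness just established for that equation.
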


This theorem will be proved in Sec.~\ref{sec:proof_main_flow}. Relevant definitions and terminology can be found in Sec.~\ref{sec:prelims}.

The next theorem shows that the STE \eqref{eq:stoch_transport_strong} is well-posed (in the sense of Definition~\ref{def:stoch_transport_weak}), and that the solution is given by the same representation formula that one has in the smooth setting. Note that for simplicity, we write $X_t$ for $X_{0, t}$ and $X_{t}^{-1}$ for $X_{0, t}^{-1}$. 

\begin{theorem} \label{thm:main_transport}
    Assume that $b$ satisfies \eqref{eq:h1}, and let $u_\initial \in \BV_\loc(\R^d)$. Then there exists a unique $\BV_\loc$-solution of the STE \eqref{eq:stoch_transport_strong} which is given by
    \begin{equation} \label{eq:transport_formula}
        u_t(x) = u_\initial(X_t^{-1}(x)), \qquad \text{a.e.}\ x \in \R^d, \ t \in [0, T] 
    \end{equation}
    $\Pas$, where $X$ is the stochastic flow generated by \eqref{eq:sde}.
\end{theorem}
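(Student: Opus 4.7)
The approach is to prove existence by approximation plus flow representation, and uniqueness via an Itô--Wentzell argument along the flow combined with a DiPerna--Lions-type commutator estimate.

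\emph{Existence.} In the smooth setting the representation is immediate: the Stratonovich chain rule applied to $u_t = u_\initial \circ X^{-1}_t$ (equivalently $u_t \circ X_t = u_\initial$) shows that this $u$ satisfies the STE classically, since the Stratonovich calculus obeys the usual chain rule and the drift and noise contributions cancel exactly against those of $X$. Taking approximations $(b^n)$ as in \eqref{eq:convergent_coeff} and mollified $u^n_\initial \to u_\initial$ in $\BV_\loc$ produces smooth solutions $u^n_t = u^n_\initial \circ (X^n_t)^{-1}$. Theorem~\ref{thm:main_flow}(ii) delivers strong convergence of $(X^n_t)^{-1}$ and $\nabla (X^n_t)^{-1}$, and a standard change-of-variables bound for composition of BV functions with $C^1$-diffeomorphisms yields uniform $\BV_\loc$-estimates for $(u^n_t)$. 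This is enough to pass to the limit in the weak formulation of the STE (after rewriting the Stratonovich noise in Itô form) and conclude that $u_t(x) := u_\initial(X^{-1}_t(x))$ is a $\BV_\loc$-solution.

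\emph{Uniqueness.} Let $u$ be any $\BV_\loc$-solution. Mollifying in space, $u^\epsilon_t := u_t * \rho_\epsilon$ satisfies
\[
    du^\epsilon_t + b_t \cdot \nabla u^\epsilon_t\, dt + \nabla u^\epsilon_t \circ dW_t = R^\epsilon_t\, dt,
\]
where $R^\epsilon_t(x) = \int \rho_\epsilon(x-y)\, (b_t(x) - b_t(y)) \cdot d(D u_t)(y)$ is the familiar DiPerna--Lions commutator (the spatially constant noise commutes with the convolution). After converting to Itô form and applying the Itô--Wentzell formula to $u^\epsilon_t(X_t(y))$, a direct calculation shows that the drift, stochastic integral and second-order correction terms cancel, leaving
\[
    u^\epsilon_t(X_t(y)) - u^\epsilon_\initial(y) = \int_0^t R^\epsilon_s(X_s(y))\, ds.
\]
The Hölder bound $\norm{R^\epsilon_s}_{L^1(K)} \leq C\, [b_s]_{C^{0,\alpha}}\, \epsilon^\alpha\, |D u_s|(K + B_\epsilon)$, combined with the bounded Jacobian of $X_s$ and $\int_0^T \norm{b_s}_{C^{0,\alpha}}\, ds < \infty$ (from \eqref{eq:h1} and $q \geq 2 > 1$), yields $\int_0^t R^\epsilon_s(X_s(\cdot))\, ds \to 0$ in $L^1_\loc(\R^d)$. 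Sending $\epsilon \to 0$ thus gives $u_t(X_t(y)) = u_\initial(y)$ $\P$-a.s.\ for a.e.\ $y$, i.e.\ $u_t = u_\initial \circ X^{-1}_t$, so $u$ coincides with the representation formula.

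The principal obstacle is the rigorous application of the Itô--Wentzell formula in this regime: one must verify exactly that the three $\tfrac12 \Delta u^\epsilon$-type contributions (the Stratonovich-to-Itô correction in $d u^\epsilon$, the $\tfrac12 \sum_i \partial_{ii} u^\epsilon\, d\langle X^i\rangle$ term, and the cross-variation $d\langle d u^\epsilon, X\rangle$) cancel, and that the stochastic integrals $\int_0^t \nabla u^\epsilon_s(X_s) \cdot dW_s$ vanish in the limit $\epsilon \to 0$. Here the bounded flow-gradient from Theorem~\ref{thm:main_flow} is essential, both for the change of variables underpinning the commutator estimate and for the moment bounds required to exchange limits and expectations.
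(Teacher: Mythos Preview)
Your existence argument is essentially the paper's: mollify the initial data and the drift, invoke the classical representation for smooth data, and pass to the limit using the flow stability from Theorem~\ref{thm:main_flow} together with the change-of-variables identities \eqref{eq:transport_substitution}--\eqref{eq:transport_derivative_substitution}.

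Your uniqueness argument, however, is genuinely different. The paper does not use a commutator estimate or the It\^o--Wentzell formula at all; instead it proves a duality lemma (Lemma~\ref{lemma:duality}) between $\BV_\loc$-solutions of the STE and weak solutions of the SCE, and then tests an arbitrary $\BV_\loc$-solution $u$ against the particular SCE solution $(X_t)_\# \vartheta$ for $\vartheta \in C_c(\R^d)$, which lies in $L^1_t C^0_c$ thanks to the $C^{1,\beta}$-regularity of the flow. The duality immediately gives $\int \vartheta\, u_t(X_t(x))\,dx = \int \vartheta\, u_\initial(x)\,dx$, hence the representation. Your route---mollify $u$, derive the commutator $R^\epsilon$, apply It\^o--Wentzell to $u^\epsilon_t(X_t(y))$, and use the H\"older bound $\|R^\epsilon_s\|_{L^1(K)} \lesssim [b_s]_{C^{0,\alpha}} \epsilon^\alpha |D u_s|(K+B_\epsilon)$---is correct and self-contained (the H\"older commutator is indeed this clean, and the $L^\infty_t$ bound \eqref{eq:bounded_bv} on $|Du_t|$ is exactly what makes the time integral finite). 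The trade-off: your approach avoids building the SCE theory and the duality lemma, but requires justifying It\^o--Wentzell for the mollified random field, which as you note is the main technical burden. The paper's approach needs only the It\^o product rule for real-valued semimartingales (inside the proof of Lemma~\ref{lemma:duality}), which is lighter, and moreover the same duality lemma is reused verbatim for uniqueness of the SCE in Theorem~\ref{thm:main_continuity}, so the investment pays off twice.
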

The proof of this theorem can be found in Sec.~\ref{sec:main_proofs}. We also prove an analogous result for the SCE (see Definition~\ref{def:stoch_continuity_weak}
for the definition of weak solutions):

\begin{theorem} \label{thm:main_continuity}
    Assume that $b$ satisfies \eqref{eq:h1}, and let $\mu_\initial \in \meas_\loc(\R^d)$. Then there exists a unique weak solution of the SCE \eqref{eq:stoch_continuity_strong} which is given by
    \begin{equation} \label{eq:continuity_formula}
        \mu_t = (X_t)_\# \mu_\initial,\quad \text{i.e.} \quad \int_{\R^d} \vartheta(x)\, \mu_t(dx) = \int_{\R^d} \vartheta(X_t(x))\, \mu_\initial(dx), \quad t \in [0, T]
    \end{equation}
    $\Pas$ for all $\vartheta \in C_c(\R^d)$, where $X$ is the stochastic flow generated by \eqref{eq:sde}.
\end{theorem}

\subsection{Background and approach} \label{sec:background_approach}

In the classical setting (i.e.~Lipschitz or more regular coefficients), the standard references on stochastic flows are H.~Kunita's monographs \cite{kunita, kunita_1990}, wherein linear SPDE's (stochastic partial differential equations) including the STE \eqref{eq:stoch_transport_strong} and the SCE \eqref{eq:stoch_continuity_strong} with smooth initial data are also treated.

Starting with the work of A.~K.~Zvonkin \cite{zvonkin_1974}, the conditions for strong well-posedness of nondegererate SDE's were significantly weakened by introducing a transformation of the equation which improves the regularity of its coefficients. This was later generalized to the multidimensional case by A.~J.~Veretennikov in~\cite{veretennikov_1981}. We will use a version of this approach, which can be outlined as follows. Let $v\colon \R^d \times [0, T] \to \R^d$ be a deterministic, twice continuously differentiable function. If $X_t$ is a solution of the SDE \eqref{eq:sde}, then by Itô's formula the composition $v_t(X_t)$ satisfies the system of equations
\begin{equation} \label{eq:zvonkin_ito_formula}
    dv_t(X_t) = \bigl(\partial_t v_t + b_t\cdot \nabla v_t + \frac{1}{2}\Delta v_t \bigr)(X_t)\, dt + \nabla v_t(X_t)\cdot dW_t
\end{equation}
(understood \componentwise, see \eqref{eq:forward_ito_transformation_componentwise}). If $v$ itself is a solution of the system of parabolic partial differential equations (PDEs)
\begin{equation} \label{eq:zvonkin_transform}
    \begin{cases}
    \partial_t v + b \cdot \nabla v + \frac{1}{2} \Delta v = \lambda v - b & \text{in}\ \R^d \times (0, T) \\
    v_{t = T} \equiv 0,
    \end{cases}
\end{equation}
for some $\lambda > 0$, then \eqref{eq:zvonkin_ito_formula} is equivalent to
\begin{equation*}
    dv_t(X_t) = \bigl(\lambda v_t - b_t\bigr)(X_t)\, dt + \nabla v_t(X_t)\cdot dW_t.
\end{equation*}
Let $g_t(x) \coloneqq x + v_t(x)$, and assume that $g_t(\cdot)$ is invertible on $\R^d$. Then the process $Y_t \coloneqq g_t(X_t)$ satisfies
\begin{equation} \label{eq:transformed_system}
    dY_t = \lambda v_t(g_t^{-1}(Y_t))\, dt + \bigl(I + \nabla v_t(g_t^{-1}(Y_t))\bigr)dW_t,
\end{equation}
where $I$ is the identity matrix. The upshot here is that the coefficients in \eqref{eq:transformed_system} turn out to be more regular than the coefficients in the original SDE, provided that the solution $v$ of \eqref{eq:zvonkin_transform} is regular enough.

In \cite{flandoli_gubinelli_priola}, F.~Flandoli, M.~Gubinelli and E.~Priola used the transformation sketched above (hereafter referred to as a Zvonkin-type transformation) to show that SDEs with drift $b \in L^\infty((0, T); C^{0, \alpha}(\R^d; \R^d))$ and constant diffusion generate unique forward stochastic flows of $C^{1, \beta}$-\diffeomorphisms, for all $\beta < \alpha$. Based on the existence and regularity of such a flow, they proved that the STE with initial data $u_\initial \in L^\infty(\R^d)$ has a unique weak solution given by the representation formula \eqref{eq:transport_formula}, provided additionally that the drift satisfies $\dvrg b \in L^p(\R^d \times [0, T])$ for some $p \in (2, \infty)$, or alternatively $\dvrg b \in L^1_\loc(\R^d \times [0, T])$ and $\alpha > 1/2$. Moreover, they proved that if $u_\initial \in \BV_\loc$, then $\BV_\loc$-solutions exist and are unique without the condition on the divergence of~$b$. It is clear that the results and methods from \cite{flandoli_gubinelli_priola} have served as an inspiration for the present study.

Similar ideas have been applied elsewhere. In \cite{wei_duan_gao_lv_2021}, J.~Wei, J.~Duan, H.~Gao and G.~Lv proved that there exists a forward stochastic stochastic flow of \diffeomorphisms generated by the SDE \eqref{eq:sde} if the drift $b$ satisfies hypothesis \eqref{eq:h1} with $\alpha > 2/q$. Although this is similar to Theorem~\ref{thm:main_flow}, note that the additional requirement $\alpha > 2/q$---which here will be removed---is quite restrictive. Moreover, we will have to deal with both the forward and the backward flow in order to prove uniqueness for both the STE and the SCE. E.~Fedrizzi and F.~Flandoli showed in \cite{fedrizzi_flandoli_2013} that if
\begin{equation}\label{eq:kr_condition}
    b \in L^q((0, T); L^p(\R^d; \R^d)) \qquad \text{for}\ \frac{2}{q} + \frac{d}{p} < 1, \quad p \geq 2,\ q > 2,
\end{equation}
then the SDE \eqref{eq:sde} generates a forward stochastic flow of homeomorphisms. Although there is a flaw in their proof (see the comment in Appendix \ref{appendix:gronwall}), the result seems to be correct. We also mention the paper \cite{krylov_rockner_2005} by N.~V.~Krylov and M.~R\"ockner, where strong local existence and uniqueness of the SDE \eqref{eq:sde} is proved under condition~\eqref{eq:kr_condition}. This result clearly covers \eqref{eq:sde} given \eqref{eq:h1}, but does not provide a differentiable stochastic flow.

Other related works on stochastic flows include \cite{beck_flandoli_gubinelli_maurelli_2019, figalli_2008,zhang_2010}, which develop the theory of stochastic Lagrangian flows in the vein of the deterministic DiPerna--Lions theory of regular Lagrangian flows, and \cite{attanasio_2010}, which addresses a one-dimensional autonomous drift of bounded variation with bounded distributional derivative.

As outlined above, the analysis of the SDE \eqref{eq:sde} proceeds via the PDE \eqref{eq:zvonkin_transform}, for which we require results on existence and regularity. Parabolic equations in $L^q_t C^{2, \alpha}_x$-spaces, albeit without lower-order terms, have been studied by N.~V.~Krylov in  \cite{krylov_2002}, where a priori estimates were established. In our setting, in addition to existence and regularity, it is necessary to show that $\norm{\nabla v^i}_{L^\infty(\R^d)}$ can be made arbitrarily small at the expense of the parameter $\lambda$, ensuring that the function $x \mapsto x + v_t(x)$ is invertible. To this end, we will provide a self-contained proof tailored to our requirements.

We will consider $\BV_\loc$-solutions to the STE and weak (measure-valued) solutions to the SCE. These classes of solutions are quite natural in this context, as the two equations are in some sense dual (cf.~Lemma \ref{lemma:duality}), and under hypothesis \eqref{eq:h1}, well-posedness is new. The closest result is, as already mentioned, the existence and uniqueness of $\BV_\loc$-solutions for the STE when $b \in L^\infty_t C^{0, \alpha}_x$, proved in \cite{flandoli_gubinelli_priola}. We have not been able to find comparable results for the SCE in the existing literature.

Since we do not consider distributional solutions to the STE (which would require taking the divergence of $b$ in the weak formulation), we will here not depend on conditions on $\dvrg b$. In \cite{wei_duan_gao_lv_2021}, existence and uniqueness of distributional solutions was proved given that $\dvrg b \in L^1_tL^\infty_x$ and Sobolev initial data, in addition to \eqref{eq:h1} with ${\alpha > 2/q}$. A similar result has been derived by W.~Neves and C.~Olivera in \cite{neves_olivera_2015} under condition \eqref{eq:kr_condition} and zero divergence on the drift. In the extensive paper \cite{beck_flandoli_gubinelli_maurelli_2019}, L.~Beck, F.~Flandoli, M.~Gubinelli and M.~Maurelli proved well-posedness results for the STE and SCE with drifts satisfying Ladyzhenskaya--Prodi--Serrin-type conditions (similar to \eqref{eq:kr_condition}) with Sobolev initial data. We also mention the paper \cite{gess_smith_2019} by B.~Gess and S.~Smith, dealing with a stochastic continuity equation with a nonlinear multiplicative noise.

\subsection{Outline}

The paper is outlined as follows. Sec.~\ref{sec:prelims} collects relevant preliminaries on function spaces and the stochastic formalism. In Sec.~\ref{sec:pde}, we focus entirely on the analysis of the deterministic PDE \ref{eq:zvonkin_transform}, proving existence, uniqueness and regularity of solutions. This is then used in Sec.~\ref{sec:sde} to analyze the SDE \eqref{eq:sde}, culminating in the proof of Theorem~\ref{thm:main_flow} in Sec.~\ref{sec:proof_main_flow}. Finally, in Sec.~\ref{sec:main_proofs}, we leverage the existence and regularity of the stochastic flow to prove well-posedness for the SPDEs \eqref{eq:stoch_transport_strong}--\eqref{eq:stoch_continuity_strong}.

\subsection{Notation}

Throughout, $|\cdot|$ is the Euclidian norm on $\R^d$, and $\norm{\cdot}$ is the Hilbert-Schmidt matrix norm. Temporal evaluation will often be written in subscript (e.g.~$v_t$, $W_t$), while spatial indices will be written in superscript (e.g.~$x^i$, $v^i$). An exception is $\partial_i f$, by which we denote the partial derivative of $f$ with respect to $x^i$. A standard mollifier $\rho^\varepsilon$ on $\R^d$ is a function ${\rho^\varepsilon(x) = \varepsilon^{-d} \rho(x/\varepsilon)}$ for $0 \leq \rho \in C^\infty_c(\R^d)$ with $\int_{\R^d} \rho\, dx = 1$. We denote by ${a \vee b}$ the maximum of $a$ and $b$. When dealing with a constant $C$, the expression ${C = C(a)}$ means that the constant depends only on the quantity $a$. We also use the notation~$\lesssim_a$, meaning that an inequality holds up to a constant $C(a)$.

\section{Preliminaries} \label{sec:prelims}

\subsection{Function spaces}

The space $C^{0, \alpha}(\R^d)$, for $\alpha \in (0, 1)$, will be the collection of bounded functions $f\colon\R^d \to \R$ with norm
\begin{equation*}
    \norm{f}_{C^{0, \alpha}(\R^d)} = \norm{f}_{C^0(\R^d)} + [f]_{C^{0, \alpha}(\R^d)} = \sup_{x \in \R^d} |f(x)| + \sup_{\substack{x, y \in \R^d \\ x \neq y}} \frac{|f(x) - f(y)|}{|x-y|^\alpha} < \infty.
\end{equation*}
(If $f$ is a matrix-valued function, replace the Euclidian norm by the matrix norm.) For $k \in \N_0 \coloneqq \{0, 1, 2, ...\}$, the space $C^{k, \alpha}(\R^d)$ comprises functions with finite norm
\begin{equation*}
    \norm{f}_{C^{k, \alpha}(\R^d)} = \norm{f}_{C^k(\R^d)} + [f]_{C^{k, \alpha}(\R^d)} = \sum_{j = 0}^k \max_{|\nu| = j} \norm{\partial_\nu f}_{C^0(\R^d)} + \max_{|\nu| = k} [\partial_\nu f]_{C^{0, \alpha}(\R^d)},
\end{equation*}
where $\nu$ is any multi-index $\nu = (\nu_1, ..., \nu_d)$. Let $m \in \N_0$ such that $m \leq k$. Then an equivalent norm for $C^{k, \alpha}(\R^d)$ is, abusing notation,
\begin{equation} \label{eq:zygmund_norm}
    \norm{f}_{C^{k, \alpha}(\R^d)} = \norm{f}_{C^m(\R^d)} + \max_{|\nu| = m}\ \sup_{\substack{h \in \R^d \\ |h| \leq 1}} \frac{\bignorm{\Delta_h^{k+1-m}[\partial_\nu f]}_{C^0(\R^d)}}{|h|^{k+\alpha-m}},
\end{equation}
where $\Delta_h^l[\cdot]$ denotes the $l$-th order iterated difference operator ${\Delta_{h}[f]}$ given by ${\Delta_{h}[f](x) = f(x+h) - f(x)}$ (see \cite[Sec.~1.2.2]{triebel_1992}). Let $L^q((0, T); C^{k, \alpha}(\R^d))$ denote the closure of $C^\infty_c((0, T); C^{k, \alpha}(\R^d))$ in the norm
\begin{equation*}
    \norm{f}_{L^q((0, T); C^{k, \alpha}(\R^d))} = \biggl(\int_0^T \norm{f_t}_{C^{k, \alpha}(\R^d)}^q\, dt\biggr)^{\frac{1}{q}}.
\end{equation*}
Note that while $C^{k, \alpha}(\R^d)$ is not a separable space, strong measurability is implicitly assumed in the above construction of $L^q((0, T); C^{k, \alpha}(\R^d))$ (see \cite{krylov_cz_2002} for more information about these spaces). For simplicity, $L^q(0, T)$ will be abbreviated as $L^q_t$ and similarly $L^q((0, T), C^{k, \alpha}(\R^d))$ as $L^q_t C^{k, \alpha}_x$.

Let $\meas_\loc(\R^d)$ be the space of locally finite measures on $\R^d$, meaning that ${\mu \in \meas_\loc(\R^d)}$ if $|\mu|(K) < \infty$ for all compact $K \subset \R^d$, where $|\mu|$ denotes the total variation measure of $\mu$. A function ${f \in L^1_{\loc}(\R^d)}$ belongs to $\BV_\loc(\R^d)$ if its distributional derivatives can be represented as measures $\partial_i f \in \radon_{\loc}(\R^d)$, for $i = 1, ..., d$. We refer to \cite{ambrosio_fusco_pallara_2000} for details on the spaces of $\BV_\loc$-functions and locally finite measures.

\subsection{Forward and backward Itô integral}

Recall that $(\Omega, \F, (\F_t)_t, \P)$ is a given stochastic basis with a $d$-dimensional Brownian motion $(W_t)_t$. Here, there will be no loss in generality in assuming that $(\F_t)_t$ is the filtration generated by $(W_t)_t$. A two-parameter filtration $(\F_{s, t})_{0 \leq s \leq t \leq T}$ is a complete family of sub $\sigma$-algebras of $\F$ which satisfies $\F_{s, t} \subset \F_{s', t'}$ for all $s' \leq s \leq t \leq t'$ and moreover ${\cap_{\varepsilon > 0} \F_{s, t+\varepsilon} = \F_{s, t}}$ and ${\cap_{\varepsilon > 0} \F_{s-\varepsilon, t} = \F_{s, t}}$. We will take $(\F_{s, t})_{0 \leq s \leq t \leq T}$ to be the completed $\sigma$-algebra generated by $(W_u - W_r)_{s \leq r \leq u \leq t}$. Let $M_t$ be a continuous stochastic process on~$[0, T]$. It is a forward martingale adapted to $\F_{s, t}$ if it is integrable, the process $M_t - M_s$ is $\F_{s, t}$-measurable and satisfies $\E[M_r - M_s| \F_{s, t}] = M_t - M_s$ for all $s \leq t \leq r$. Similarly, it is a backward martingale if $\E[M_r - M_t| \F_{s, t}] = M_s - M_t$ for all $r \leq s \leq t$. Let $X_t$ be a continuous stochastic process which is $\F_{s, t}$-adapted. It is a forward/backward \semimartingale if it is a sum of a forward/backward $\F_{s, t}$-martingale and a continuous $\F_{s, t}$-adapted process of bounded variation.

Let $X_t$ be a backward \semimartingale with respect to $\F_{s, t}$. Then the backward Itô integral is defined as
\begin{equation*}
    \int_s^t f_r\, \hat{d}X_r \coloneqq \lim_{n \to \infty} \sum_{k = 0}^{n-1} f_{t_{k+1} \vee s}(X_{t_{k+1} \vee s} - X_{t_{k} \vee s}),
\end{equation*}
whenever the right-hand side converges in probability for any sequence of partitions ${0 = t_0 \leq t_1 \leq ... \leq t_n = t}$ of $[0, t]$ with vanishing mesh size. The backward stochastic calculus in this setting is completely parallell to the forward calculus; see~\cite{kunita_1990}.

Let $X_t(x)$ be a (forward) stochastic process with a parameter $x \in \R^d$. For $k \in \N_0$, it is called a $C^{k, \beta}$-valued process if for all $t \in [0, T]$ and $\P$-a.e.~$\omega \in \Omega$, the map $x \mapsto X_t(x, \omega)$ belongs to $C^{k, \beta}(\R^d)$. If in addition $\partial_\nu X_t(x)$ is a family of continuous \semimartingales for all $|\nu| \leq k$, then is is called a (forward) \mbox{$C^{k, \beta}$-\semimartingale}.

\subsection{Stochastic flows}

Following H.~Kunita, we begin by defining stochastic flows without reference to SDEs.

\begin{definition}[Kunita \cite{kunita_1990}] \label{def:stoch_flow}
    Let $X = X_{s, t}(x, \omega)$, defined for $s, t \in [0, T]$ and $x \in \R^d$, be a continuous $\R^d$-valued random field on $(\Omega, \F, \P)$. It is a stochastic flow of homeomorphisms if for $\P$-a.e.~$\omega \in \Omega$, the family $(X_{s, t}(\omega))_{s, t \in [0, T]}$ is a flow of homeomorphisms, i.e.
    \begin{itemize}
        \item[(i)] $X_{s, t}(\omega) = X_{r, t}(\omega) \circ X_{s, r}(\omega)$ for all $s,r, t \in [0, T]$,
        \item[(ii)] $X_{s, s}(\omega, x) = x$ for all $s \in [0, T]$ and $x \in \R^d$,
        \item[(iii)] $X_{s, t}(\omega)\colon \R^d \to \R^d$ is a homeomorphism on $\R^d$ for all $s, t \in [0, T]$.
    \end{itemize}
    We say that $X$ is a stochastic flow of $C^k$-\diffeomorphisms if
    \begin{itemize}
        \item[(iv)] $X_{s, t}(\omega)$ is $k$ times differentiable with respect to $x$, and the derivatives are continuous in $(x, s, t)$.
    \end{itemize}
    If in addition to (iv), the derivatives are H\"older continuous with exponent $\beta$ with respect to $x$, we say that $X$ is a stochastic flow of $C^{k, \beta}$-\diffeomorphisms.
\end{definition}

The restriction of the flow to the forward temporal variables $(X_{s, t})_{0 \leq s \leq t \leq T}$ will be called the forward flow. Similarly, the backward flow is the restriction to the backward variables. It will be denoted by $(X_{s, t}^{-1})_{0 \leq s \leq t \leq T}$, in view of the relation $X_{s, t}^{-1} = X_{t, s}$ for $0 \leq s \leq t \leq T$.

Stochastic flows of homeomorphisms can be related to solutions of stochastic differential equations as follows. Let $(X_t^{x, s}\setsep 0 \leq s \leq t \leq T,\, x \in \R^d)$ be a family of solutions to the SDE \eqref{eq:sde}. We shall say that a forward stochastic flow of homeomorphisms is generated by the SDE \eqref{eq:sde} if it is a modification of the system $(X_t^{x, s}\setsep 0 \leq s \leq t \leq T,\, x \in \R^d)$ of solutions. Of course, this also implies that for any $x \in \R^d$ and $s \in [0, T]$, the process $(X_{s, t}(x))_{t \in [s, T]}$ (or a version of it) is a solution of \eqref{eq:sde}. Equivalently, a backward flow of homeomorphisms $X_{s, t}^{-1}(x)$ is said to be generated by \eqref{eq:backward_sde} if it is a modification of the system of backward solutions $(X_s^{x, t}\colon 0 \leq s \leq t \leq T,\, x \in \R^d)$.

\section{Parabolic PDEs with coefficients in \texorpdfstring{$L^q_t C^{0, \alpha}_x$}{TEXT}} \label{sec:pde}

In this section, we consider the PDE
\begin{equation} \label{eq:model_eq_classical}
    \begin{cases}
        \partial_t v + b \cdot \nabla v + \lambda v = \Delta v + f & \text{in}\ \R^d \times (0, T), \\
        v_{t = 0} \equiv 0,
    \end{cases}
\end{equation}
where $v\colon \R^d \times [0, T] \to \R$. This is a model equation for the components of \eqref{eq:zvonkin_transform} (one can easily convert to a terminal value problem by a change of variables). Given the formal calculations in Sec.~\ref{sec:background_approach}, it makes sense assume that $b$ satisfies \eqref{eq:h1} and in addition
\begin{equation} \label{eq:h2}
    \tag{H2}
    f \in L^q((0, T); C^{0, \alpha}(\R^d)) \qquad \text{for}\ q \in [2, \infty),\ \alpha \in (0, 1).
\end{equation}
Our aim is to find solutions of \eqref{eq:model_eq_classical} in the class
\begin{equation*}
    \W^{1, q}_{2, \alpha}(T) \coloneqq \bigl\{v \in L^{q}_t C^{2,\alpha}_{x} \cap L^\infty_t C^{1, \alpha}_{x},\ \partial_t v \in L^{q}_t C^{0, \alpha}_{x}\bigr\},
\end{equation*}
with bounded norm $\norm{v}_{\W^{1, q}_{2, \alpha}(T)} \coloneqq \norm{v}_{L^{q}_t C^{2,\alpha}_{x}} + \norm{v}_{L^\infty_t C^{1, \alpha}_{x}} + \norm{\partial_t v}_{L^{q}_t C^{0, \alpha}_{x}}$, according to the following definition.

\begin{definition} \label{def:model_eq_weak}
    A solution of \eqref{eq:model_eq_classical} is a function $v \in \W^{1, q}_{2, \alpha}(T)$ which satisfies
    \begin{equation} \label{eq:model_eq_weak}
        \int_0^T \int_{\R^d} v \partial_t \varphi\, dx dt = \int_0^T \int_{\R^d} \bigl(b \cdot \nabla v + \lambda v - \Delta v - f\bigr)\varphi\, dx dt
    \end{equation}
    for all $\varphi \in C^\infty_c(\R^d \times [0, T))$.
\end{definition}

We begin with proving the familiar mild form/Duhamel representation formula for solutions, from which we derive uniqueness and a priori regularity.

\subsection{Uniqueness and a priori regularity}

The calculations in this section will depend heavily on the explicit formula for the fundamental solution of the \mbox{$d$-dimensional} heat equation, which we denote by $K_t(x) \coloneqq (4 \pi t)^{-\nicefrac{d}{2}}\, \exp(-\nicefrac{|x|^2}{4 t})$.

\begin{lemma}
    Assume that $b$ and $f$ satisfy \eqref{eq:h1}--\eqref{eq:h2}. If $v$ is a solution of \eqref{eq:model_eq_classical}, then it satisfies
    \begin{equation} \label{eq:zvonkin_duhamel}
        v_t(x) = \int_0^t e^{-\lambda(t-s)} \bigl(K_{t-s} * (f_s - b_s\cdot\nabla v_s)\bigr)(x)\, ds
    \end{equation}
    for all $x \in \R^d$ and a.e.~$t \in (0, T)$.
\end{lemma}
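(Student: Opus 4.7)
My plan is to promote the weak formulation \eqref{eq:model_eq_weak} to a pointwise PDE holding almost everywhere, absorb $b \cdot \nabla v$ into the source term, and then apply the classical heat kernel representation for the inhomogeneous heat equation at the level of a spatial mollification to avoid regularity gymnastics.

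First, since $\partial_t v \in L^q_t C^{0,\alpha}_x$ with $q \geq 2$, $v$ admits a continuous-in-time representative with values in $C^{0,\alpha}_x$, so integration by parts in time in \eqref{eq:model_eq_weak} against a test function $\varphi \in C^\infty_c(\R^d \times [0, T))$ is licit and yields
\begin{equation*}
    \int_{\R^d} v_0\, \varphi(\cdot, 0)\, dx + \int_0^T \int_{\R^d}\bigl(\partial_t v + b \cdot \nabla v + \lambda v - \Delta v - f\bigr)\varphi\, dx\, dt = 0.
\end{equation*}
Restricting to $\varphi$ with $\varphi(\cdot, 0) \equiv 0$ and using density gives the pointwise PDE a.e.\ in $\R^d \times (0, T)$, and then varying $\varphi(\cdot, 0)$ forces $v_{t = 0} \equiv 0$ (so the initial condition is automatic from Def.~\ref{def:model_eq_weak}). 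Setting $g := f - b \cdot \nabla v$, which lies in $L^q_t C^{0, \alpha}_x$ by the algebra structure of $C^{0, \alpha}_x$ combined with $\nabla v \in L^\infty_t C^0_x$, the equation reduces to the inhomogeneous heat equation $\partial_t v + \lambda v - \Delta v = g$ with zero initial datum.

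Next I would mollify in space: $v^\varepsilon := v *_x \rho^\varepsilon$ and $g^\varepsilon := g *_x \rho^\varepsilon$ satisfy the same PDE with $v^\varepsilon \in L^\infty_t C^k_x$ for every $k$. Putting $w^\varepsilon_s := e^{\lambda s} v^\varepsilon_s$, so that $\partial_s w^\varepsilon - \Delta w^\varepsilon = e^{\lambda s} g^\varepsilon$ pointwise with $w^\varepsilon_0 \equiv 0$, the identity $\partial_s K_{t-s} = -\Delta K_{t-s}$ together with integration by parts (justified by the exponential decay of $K_{t-s}$ and the spatial smoothness of $w^\varepsilon$) gives $\partial_s [K_{t-s} * w^\varepsilon_s] = K_{t-s} * (e^{\lambda s} g^\varepsilon_s)$ classically on $[0, t)$. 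Integrating in $s$ over $[0, t - \delta]$ and sending $\delta \to 0$ (valid since $w^\varepsilon \in C([0, T]; C^0_x)$, a consequence of $\partial_t v^\varepsilon \in L^q_t C^0_x$ with $q \geq 2$) yields
\begin{equation*}
    v^\varepsilon_t(x) = \int_0^t e^{-\lambda(t-s)}(K_{t-s} * g^\varepsilon_s)(x)\, ds.
\end{equation*}
Finally I let $\varepsilon \to 0$: the left-hand side converges to $v_t(x)$ uniformly in $x$, and on the right the contraction bound $\norm{K_{t-s} * h}_{C^0_x} \leq \norm{h}_{C^0_x}$ combined with $g^\varepsilon_s \to g_s$ in $C^0_x$ for a.e.~$s$ and the domination $\norm{K_{t-s} * g^\varepsilon_s}_{C^0_x} \leq \norm{g_s}_{C^0_x} \in L^q_s(0, T) \subset L^1_s(0, T)$ produce convergence by dominated convergence, yielding \eqref{eq:zvonkin_duhamel}. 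No step presents a genuine obstacle; the most delicate point is the $\delta \to 0$ limit in the heat kernel identity, and mollification reduces this to routine verification of temporal continuity of $w^\varepsilon$.
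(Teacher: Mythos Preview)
Your argument is correct. You first promote the weak formulation to a pointwise PDE using $\partial_t v \in L^q_t C^{0,\alpha}_x$, then reduce to the standard heat equation with source $g = f - b\cdot\nabla v$, mollify in space, apply the classical Duhamel identity via $\partial_s[K_{t-s}*w^\varepsilon_s] = K_{t-s}*(e^{\lambda s}g^\varepsilon_s)$, and pass to the limit. Each step is sound; in particular the absolute continuity of $s\mapsto K_{t-s}*w^\varepsilon_s$ on $[0,t-\delta]$ follows from $\partial_t v^\varepsilon \in L^q_t C^0_x$ and the smoothness of $K_{t-s}$ for $t-s\geq\delta$, so the fundamental theorem of calculus is legitimate.

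The paper takes a genuinely different route: it works entirely at the weak level via a duality argument. For arbitrary $\psi\in C^\infty_c(\R^d\times[0,T])$ it builds the test function $\varphi(x,t) = -\int_t^T e^{-\lambda(s-t)}(K_{s-t}*\psi_s)(x)\,ds$, which solves the adjoint equation $\partial_t\varphi + \Delta\varphi = \lambda\varphi + \psi$ with $\varphi_T\equiv 0$, plugs this into \eqref{eq:model_eq_weak}, and applies Fubini to identify $\int v\psi$ with the integral of $\psi$ against the right-hand side of \eqref{eq:zvonkin_duhamel}. This avoids both the promotion to a pointwise PDE and the mollification step. Your approach is more elementary and transparent about the underlying heat kernel mechanism, while the paper's duality argument is shorter and never leaves the weak formulation, which is convenient when one has only distributional information on $v$.
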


\begin{remark}
    With nonzero initial data $v_{t = 0} = v_\initial$, the solution would be given by the implicit formula
    \begin{equation*} \label{eq:zvonkin_duhamel_nonzero_initial}
        v_t(x) = e^{-\lambda t} (K_t * v_\initial)(x) + \int_0^t e^{-\lambda(t-s)} \bigl(K_{t-s} * (f_s - b_s\cdot\nabla v_s)\bigr)(x)\, ds.
    \end{equation*}
    A natural class of initial data in this setting would be $C^{0, \alpha}(\R^d)$, for which solutions in $\W^{2, \alpha}_{1, q}(T)$ can be obtained.
\end{remark}

\begin{proof}
    For arbitrary $\psi \in C^\infty_c(\R^d \times [0, T])$, define the function
    \begin{equation*}
        \varphi(x, t) \coloneqq -\int_t^T e^{-\lambda(s-t)} \bigl(K_{s-t}(\cdot) * \psi(\cdot, s)\bigr)(x)\, ds.
    \end{equation*}
    Then $\varphi$ satisfies the equation $\partial_t \varphi + \Delta \varphi = \lambda \varphi + \psi$ on $\R^d \times (0, T)$, with terminal condition $\varphi(x, T) \equiv 0$. Moreover, due to the exponential decay of the heat kernel, it is easy to see by an approximation argument that $\varphi$ can be used as a test function for $v$ in \eqref{eq:model_eq_weak}. This gives
    \begin{equation*}
        \begin{aligned}
            \int_0^T \int_{\R^d} v \psi\, dx dt 
            & = \int_0^T \int_{\R^d} v(\partial_t \varphi + \Delta \varphi - \lambda \varphi)\, dx dt = \int_0^T \int_{\R^d} (b \cdot \nabla v - f) \varphi\, dx dt \\
            & = -\int_0^T \int_{\R^d} \int_t^T (b \cdot \nabla v - f) e^{-\lambda(s-t)} \bigl(K_{s-t}(\cdot) * \psi(\cdot, s)\bigr)(x)\, ds dx dt \\
            & = \int_0^T \int_{\R^d} \psi \biggl(\int_0^t e^{-\lambda (t-s)} (K_{t-s}*(f_s - b_s \cdot \nabla v_s)) \, ds\biggr)\, dx dt
        \end{aligned}
    \end{equation*}
    where we have changed the order of integration in the last equality by Fubini's theorem. Since $\psi$ was arbitrary, this proves \eqref{eq:zvonkin_duhamel}.
\end{proof}

The next lemma, which is well known (e.g.~in the theory of Schauder estimates), will be used repeatedly below.

\begin{lemma} \label{lemma:heat_kernel_diff}
    Let $f \in C^{0, \alpha}(\R^d)$ for some $\alpha \in (0, 1)$. Then there exists a constant $C = C(\alpha, d) > 0$ such that
    \begin{equation} \label{eq:convolution_estimate}
        \norm{\partial_\nu (K_t * f)}_{L^\infty_x} \leq C t^{\frac{\alpha - |\nu|}{2}} [f]_{C^{0, \alpha}_x}
    \end{equation}
    for all $t > 0$ and $|\nu| \in \{1, 2, 3\}$.
\end{lemma}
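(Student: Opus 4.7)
The plan is to prove this by moving all derivatives onto the heat kernel and exploiting the mean-zero property of derivatives of $K_t$ together with the Hölder assumption on $f$, finishing by a standard parabolic scaling.

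First, since $K_t \in \mathscr{S}(\R^d)$ for each $t > 0$ and $f \in L^\infty$, differentiation under the integral gives
\begin{equation*}
    \partial_\nu (K_t * f)(x) = \int_{\R^d} \partial_\nu K_t(y)\, f(x - y)\, dy.
\end{equation*}
The key observation is that for every multi-index with $|\nu| \geq 1$,
\begin{equation*}
    \int_{\R^d} \partial_\nu K_t(y)\, dy = 0,
\end{equation*}
because $\partial_\nu K_t$ is itself a pure derivative of a Schwartz function, and the integral of $\partial_j g$ over $\R^d$ vanishes for any $g \in \mathscr{S}(\R^d)$. Subtracting $f(x)$ under the integral therefore costs nothing:
\begin{equation*}
    \partial_\nu (K_t * f)(x) = \int_{\R^d} \partial_\nu K_t(y)\, \bigl[f(x - y) - f(x)\bigr]\, dy.
\end{equation*}

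Next I would apply the Hölder bound $|f(x - y) - f(x)| \leq [f]_{C^{0, \alpha}_x} |y|^\alpha$ to obtain
\begin{equation*}
    \norm{\partial_\nu (K_t * f)}_{L^\infty_x} \leq [f]_{C^{0, \alpha}_x} \int_{\R^d} |\partial_\nu K_t(y)|\, |y|^\alpha\, dy,
\end{equation*}
and then evaluate the remaining integral by the parabolic scaling $y = \sqrt{t}\, z$. Since $\partial_\nu K_t(y) = t^{-d/2 - |\nu|/2} (\partial_\nu K_1)(y/\sqrt{t})$, the substitution yields
\begin{equation*}
    \int_{\R^d} |\partial_\nu K_t(y)|\, |y|^\alpha\, dy = t^{(\alpha - |\nu|)/2} \int_{\R^d} |\partial_\nu K_1(z)|\, |z|^\alpha\, dz.
\end{equation*}
The latter integral is finite because $\partial_\nu K_1$ is a polynomial times a Gaussian, and depends only on $\alpha$, $d$, and $|\nu|$; taking the max over $|\nu| \in \{1, 2, 3\}$ produces the single constant $C = C(\alpha, d)$.

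There is no real obstacle here — the only thing one has to be slightly careful about is the mean-zero cancellation, which is what makes the estimate gain a factor of $t^{\alpha/2}$ over the naive bound that would come from differentiating through without subtraction. The same argument would in fact work for any $|\nu| \geq 1$; the restriction to $|\nu| \leq 3$ in the statement merely reflects the range needed later in the paper.
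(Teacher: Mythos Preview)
Your proof is correct and follows essentially the same route as the paper: both exploit the mean-zero property of $\partial_\nu K_t$ to subtract $f(x)$, apply the H\"older bound, and then reduce to a $t$-independent integral via parabolic scaling. The only cosmetic difference is that the paper bounds $|\partial_\nu K_t|$ pointwise by $t^{-|\nu|/2} K_{2t}$ before changing variables, whereas you invoke the exact scaling identity $\partial_\nu K_t(y) = t^{-d/2-|\nu|/2}(\partial_\nu K_1)(y/\sqrt{t})$ directly; the two are equivalent.
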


\begin{proof}
    Since the derivatives $\partial_\nu K_t(x)$ integrate to zero on $\R^d$ for any $t > 0$ and $|\nu| \in \{1, 2, 3\}$, we have
    \begin{equation*} \label{eq:heat_kernel_symmetry}
        \begin{aligned}
            |\partial_\nu(K_t * f)(x)| = \bigl|\int_{\R^d} \partial_\nu K_{t}(x-y) f(y)\, dy\bigr| & = \bigl|\int_{\R^d} \partial_\nu K_{t}(x-y) (f(y) - f(x))\, dy\bigr| \\
            & \leq [f]_{C^{0, \alpha}_x} \int_{\R^d} |\partial_\nu K_{t}(x-y)| |x-y|^\alpha\, dy.
        \end{aligned}
    \end{equation*}
    Using that $|\partial_\nu K_{t}(x)| \lesssim t^{-\nicefrac{|\nu|}{2}} K_{2t}(x)$ on $\R^d$, the last integral can be bounded by
    \begin{equation*}
        \begin{aligned}
            \int_{\R^d} |\partial_\nu K_{t}(x-y)| |x-y|^\alpha\, dy & \lesssim \frac{1}{t^\frac{|\nu|}{2}} \frac{1}{(4 \pi t)^{\frac{d}{2}}} \int_{\R^d} e^{-\frac{|x-y|^2}{8 t}} |x-y|^\alpha\, dy \\
            & = \frac{1}{t^\frac{|\nu|-\alpha}{2}} \frac{1}{(4\pi)^\frac{d}{2}} \int_{\R^d} e^{-\frac{|z|^2}{8}} |z|^\alpha\, dz,
        \end{aligned}
    \end{equation*}
    where we have used the change of variables $z = (x-y)/\sqrt{t}$. Since the final integral is bounded by some constant depending only on $\alpha$ and $d$, we conclude that \eqref{eq:convolution_estimate} holds.
\end{proof}

\begin{proposition}[A priori regularity] \label{prop:apriori_regularity}
    Assume that $b$ and $f$ satisfy \eqref{eq:h1}--\eqref{eq:h2}. If $v$ is a solution of \eqref{eq:model_eq_classical}, then for any $\lambda > 0$ there exists a constant
    \begin{equation*} \label{eq:a_priori_constant}
        C_{f} = C_{f}\bigl(\alpha, d, q, T, \norm{b}_{L^q_t C^{0, \alpha}_{x}}, \norm{f}_{L^q_t C^{0, \alpha}_{x}}\bigr) \geq 0
    \end{equation*}
    such that $\norm{v}_{\W^{1, q}_{2, \alpha}} \leq C_{f}$. The constant $C_f$ is continuous with respect to $\norm{f}_{L^q_t C^{0, \alpha}_{x}}$ and admits the limit
    \begin{equation} \label{eq:apriori_limit_f}
        \lim_{\norm{f}_{L^q_t C^{0, \alpha}_{x}} \to 0}\, C_{f} = 0.
    \end{equation}
    In particular, solutions are unique. Furthermore, we have $\lim_{\lambda \to \infty} \norm{v}_{L^{\infty}_t C^{1}_{x}} = 0$.
\end{proposition}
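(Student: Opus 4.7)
My plan is to derive the a priori estimate by combining the Duhamel representation \eqref{eq:zvonkin_duhamel} with the heat-kernel bounds of Lemma~\ref{lemma:heat_kernel_diff}, and then closing the loop created by the drift term via the Hölder-algebra inequality $\norm{b\cdot\nabla v}_{C^{0,\alpha}_x}\leq\norm{b}_{C^{0,\alpha}_x}\norm{\nabla v}_{C^{0,\alpha}_x}$. Treating $g:=f-b\cdot\nabla v$ as a forcing term in $L^q_tC^{0,\alpha}_x$, I would differentiate \eqref{eq:zvonkin_duhamel} once or twice in $x$ and apply Lemma~\ref{lemma:heat_kernel_diff} for $|\nu|\in\{1,2,3\}$, together with a standard Schauder split of the $C^{0,\alpha}_x$-seminorm into the two regimes $|x-y|\leq\sqrt{t-s}$ (where one uses the higher-derivative bound and the mean value theorem) and $|x-y|>\sqrt{t-s}$ (where the lower-derivative bound plus the triangle inequality suffices). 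This gives pointwise-in-$t$ estimates of the form
\begin{equation*}
\norm{\nabla^k v_t}_{C^{0,\alpha}_x}\lesssim\int_0^t e^{-\lambda(t-s)}(t-s)^{-\gamma_k}\norm{g_s}_{C^{0,\alpha}_x}\,ds,\qquad k\in\{1,2\},
\end{equation*}
with exponents $\gamma_k<1$ tuned to the kernel powers in the lemma. Young's (or Hölder's) inequality in time then delivers $\norm{\nabla v}_{L^\infty_tC^{0,\alpha}_x}+\norm{\nabla^2 v}_{L^q_tC^{0,\alpha}_x}\leq C_\lambda\norm{g}_{L^q_tC^{0,\alpha}_x}$, while the time-derivative bound is read off the equation $\partial_tv=\Delta v-\lambda v+g$. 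Crucially, $C_\lambda$ inherits a strictly negative power of $\lambda$ from the factor $e^{-\lambda(t-s)}$ after the substitution $u=\lambda(t-s)$ inside the resulting Gamma-type integral.

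To close the estimate, I combine this with $\norm{g}_{L^q_tC^{0,\alpha}_x}\leq\norm{f}_{L^q_tC^{0,\alpha}_x}+\norm{b}_{L^q_tC^{0,\alpha}_x}\norm{\nabla v}_{L^\infty_tC^{0,\alpha}_x}$, and for $\lambda$ large enough that $C_\lambda\norm{b}_{L^q_tC^{0,\alpha}_x}<1/2$ the $\nabla v$-term is absorbed, producing $\norm{v}_{\W^{1,q}_{2,\alpha}(T)}\lesssim\norm{f}_{L^q_tC^{0,\alpha}_x}$. For the remaining small-$\lambda$ regime I would partition $[0,T]$ into subintervals short enough that the short-time analogue of the absorption succeeds uniformly in $\lambda$, and iterate from one subinterval to the next, using the semigroup contraction $\norm{K_\tau*w}_{C^{1,\alpha}_x}\leq\norm{w}_{C^{1,\alpha}_x}$ to propagate the $C^{1,\alpha}$-endpoint data across the intervals at the cost of a finite multiplicative constant. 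Linearity in $f$ then makes $C_f$ continuous in $\norm{f}_{L^q_tC^{0,\alpha}_x}$ and forces \eqref{eq:apriori_limit_f}; uniqueness is immediate upon applying the estimate to the difference of two solutions, which satisfies \eqref{eq:model_eq_classical} with $f\equiv 0$. The final assertion $\norm{v}_{L^\infty_tC^1_x}\to 0$ as $\lambda\to\infty$ then follows from the polynomial decay $C_\lambda=O(\lambda^{-\gamma})$ read off the $|\nu|=1$ instance of Lemma~\ref{lemma:heat_kernel_diff}.

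The main technical obstacle I anticipate is the endpoint $q=2$: the natural temporal kernel controlling $[\nabla v_t]_{C^{0,\alpha}_x}$ behaves like $(t-s)^{-1/2}$, which is \emph{just} outside $L^{q'}_t$ when $q'=2$, so the direct Hölder argument breaks down at $q=2$. To reach the endpoint I would characterize $C^{1,\alpha}_x$ through the second-order-difference Zygmund norm from \eqref{eq:zygmund_norm} (with $k=1$ and $m=0$), exploit the vanishing moment $\int_{\R^d}\Delta_h^2 K_\tau(z)\,dz=0$ in the spirit of the proof of Lemma~\ref{lemma:heat_kernel_diff}, and interpolate between the $|\nu|=2$ and $|\nu|=3$ bounds of that lemma to sharpen the temporal kernel; combined with Young's convolution inequality in its $L^1*L^q\to L^q$ form, this should close the endpoint case after first establishing the $L^q_tC^{2,\alpha}_x$-bound on $\nabla^2 v$ and then bootstrapping back to the $L^\infty_tC^{1,\alpha}_x$-bound on $v$.
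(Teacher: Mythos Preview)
Your overall strategy---Duhamel plus the heat-kernel bounds of Lemma~\ref{lemma:heat_kernel_diff}, then close the loop through the drift term---is the same as the paper's, but the closure mechanism is genuinely different. You rely on \emph{absorption}: make the constant in front of $\norm{\nabla v}_{L^\infty_tC^{0,\alpha}_x}$ small by taking $\lambda$ large (or the subinterval short), then move it to the left. The paper instead never absorbs: it writes down an integral inequality of the form
\[
\norm{v_t}_{C^{1,\alpha}_x}\lesssim\int_0^t w_{h}(t-s)\norm{f_s}_{C^{0,\alpha}_x}\,ds+\int_0^t w_{h}(t-s)\norm{b_s}_{C^{0,\alpha}_x}\norm{v_s}_{C^{1,\alpha}_x}\,ds,
\]
with an $h$-dependent kernel $w_h$ that is shown to lie in $L^{q'}(0,t)$ uniformly in $h$, and then invokes the tailored Gronwall inequality of Proposition~\ref{prop:gronwall}. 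This yields a $\lambda$-independent bound on $\norm{v}_{L^\infty_tC^{1,\alpha}_x}$ in one stroke, after which $L^q_tC^{2,\alpha}_x$ (via second differences on $\nabla v$ and Minkowski) and $\partial_t v$ follow; the $\lambda\to\infty$ limit for $\norm{v}_{L^\infty_tC^1_x}$ then comes from dominated convergence, not from a polynomial rate.

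Your plan has a real gap precisely at the endpoint $q=2$ you flag. The Schauder split gives $[\nabla(K_{t-s}*g)]_{C^{0,\alpha}_x}\lesssim(t-s)^{-1/2}[g]_{C^{0,\alpha}_x}$, and neither large $\lambda$ nor short time makes $\norm{e^{-\lambda r}r^{-1/2}}_{L^{2}(0,\delta)}$ finite, let alone small. Switching to the Zygmund second-difference description of $C^{1,\alpha}_x$ does produce an $L^2$-kernel (this is essentially the paper's computation of $\norm{w_h}_{L^p}$), but its $L^2$-norm is of order one uniformly in $h$, $\lambda$, and $\delta$: the dangerous piece $|h|^{1-\alpha}r^{(\alpha-2)/2}\indicator_{\{r>|h|^2\}}$ has $L^2$-norm $\sim 1$ for small $|h|$ regardless of the interval length. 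So you cannot make $C_\lambda\norm{b}_{L^q_tC^{0,\alpha}_x}<\tfrac12$ by tuning $\lambda$ or $\delta$. Your proposed escape---``first establish $L^q_tC^{2,\alpha}_x$, then bootstrap back''---is circular: the $C^{2,\alpha}_x$ estimate needs $g=f-b\cdot\nabla v\in L^q_tC^{0,\alpha}_x$, which is exactly the $L^\infty_tC^{1,\alpha}_x$ control you are trying to obtain. (Relatedly, your assertion $\gamma_2<1$ is false: the pointwise Schauder bound for $[\nabla^2(K_{t-s}*g)]_{C^{0,\alpha}_x}$ is $(t-s)^{-1}$, which is why the paper treats $C^{2,\alpha}_x$ through $\Delta_h^2[\nabla v]$ and Minkowski rather than through a Young-type convolution.)

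There are two clean ways out. The paper's way is to drop absorption and use Proposition~\ref{prop:gronwall}, which handles kernels that are merely in $L^{q'}$ without any smallness---the iteration there gains a factor $(n!)^{-1/q}$ that forces convergence. If you prefer to keep the absorption/partition philosophy, the smallness on a subinterval $[t_0,t_0+\delta]$ must come from $\norm{b}_{L^q([t_0,t_0+\delta];C^{0,\alpha}_x)}\to 0$ as $\delta\to 0$ (absolute continuity of the $L^q$-integral), not from the kernel constant; with that change your iteration goes through for all $q\in[2,\infty)$, and the number of subintervals is controlled by $\norm{b}_{L^q_tC^{0,\alpha}_x}$ alone.
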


\begin{proof}
    Let $v$ be a solution of \eqref{eq:model_eq_classical}. We first make the following claim.

    \begin{claim} \label{claim:1}
        There is a constant $C_{f}^{(1)} = C_{f}^{(1)} \bigl(\alpha, d, q, T, \norm{b}_{L^q_t C^{0, \alpha}_{x}}, \norm{f}_{L^q_t C^{0, \alpha}_{x}}\bigr) \geq 0$ such that $\norm{v}_{L^\infty_t C^{1, \alpha}_{x}} \leq C_{f}^{(1)}$.
    \end{claim}

    \begin{proof} [Proof of Claim \ref{claim:1}]
        To estimate $\norm{v}_{L^\infty_t C^{1, \alpha}_{x}}$, it would suffice to bound
        \begin{equation*}
            \eta_{h}(t) \coloneqq \norm{v_t}_{L^\infty_x} + \norm{\nabla v_t}_{L^\infty_x} + \frac{\norm{\Delta_h[\nabla v_t]}_{L^\infty_x}}{|h|^\alpha}
        \end{equation*}
        uniformly for a.e.~$t \in (0, T)$ and $h \in \R^d \setminus \{0\}$. Using Young's convolution inequality and that $\norm{K_{t-s}}_{L^1_x} \lesssim_{d} 1$ yields
        \begin{equation} \label{eq:v_bound}
            \begin{aligned}
                \norm{v_t}_{L^\infty_x} & \lesssim_{d} \int_0^t e^{-\lambda(t-s)} \bigl(\norm{f_s}_{L^\infty_x} + \norm{b_s}_{L^\infty_x} \norm{\nabla v_s}_{L^\infty_x}\bigr)\, ds \\
                & \leq \int_0^t \norm{f_s}_{L^\infty_x} + \norm{b_s}_{L^\infty_x} \norm{\nabla v_s}_{L^\infty_x}\, ds
            \end{aligned}
        \end{equation}
        Differentiating \eqref{eq:zvonkin_duhamel}, we obtain
        \begin{equation} \label{eq:dv_bound}
        \begin{aligned}
            \norm{\nabla v_t}_{L^\infty_x} & \lesssim_{\alpha, d} \int_0^t \frac{e^{-\lambda(t-s)}}{(t-s)^\frac{1-\alpha}{2}} [f_s - b_s \cdot \nabla v_s]_{C^{0, \alpha}_x}\, ds \\
            & \lesssim_d \int_0^t \frac{1}{(t-s)^\frac{1-\alpha}{2}} \bigl(\norm{f_s}_{C^{0, \alpha}_{x}} + \norm{b_s}_{C^{0, \alpha}_{x}} \norm{v_s}_{C^{1, \alpha}_{x}}\bigr)\, ds,
        \end{aligned}
        \end{equation}
        where we have used Lemma \ref{lemma:heat_kernel_diff}. To estimate the term $\norm{\Delta_h[\nabla v_t]}_{L^\infty_x} / |h|^\alpha$ in $\eta_h$, observe first that if $|h| > 1$, then
        \begin{equation} \label{eq:large_h}
            \frac{\norm{\Delta_h[\nabla v_t]}_{L^\infty_x}}{|h|^\alpha} < 2 \norm{\nabla v_t}_{L^\infty_x}.
        \end{equation}
        If on the other hand $|h| \leq 1$, we have
        \begin{equation} \label{eq:first_diff}
            \norm{\Delta_h[\nabla v_t]}_{L^\infty_x} \leq \int_0^t \bignorm{\Delta_h\bigl[\nabla\bigl(K_{t-s} * (f_s - b_s \cdot \nabla v_s)\bigr)\bigr]}_{L^\infty_x}\, ds,
        \end{equation}
        which can be obtained by differentiating \eqref{eq:zvonkin_duhamel} and applying the difference operator~$\Delta_h$. We will use two different ways to bound the integrand on the right-hand side of \eqref{eq:first_diff}: one on the interval $\bigl(0, (t-|h|^2) \vee 0\bigr)$, and one on the remaining interval $\bigl((t-|h|^2) \vee 0, t\bigr)$. First, by Lemma \ref{lemma:heat_kernel_diff}, we have for any $g \in C^{0, \alpha}(\R^d)$ and $i = 1,..., d$ that
        \begin{equation*}
            \begin{aligned}
                \bignorm{\Delta_h[\partial_i (K_{t-s} * g)]}_{L^\infty_x} & = \biggnorm{\int_0^1 \frac{\partial}{\partial r} \partial_i (K_{t-s} * g)(\cdot + rh)\, dr}_{L^\infty_x} \\
                & \leq |h| \bignorm{\nabla(\partial_i(K_{t-s} * g))}_{L^\infty_x} \lesssim_{\alpha, d} \frac{|h|}{(t-s)^\frac{2-\alpha}{2}} [g]_{C^{0, \alpha}_x}.
            \end{aligned}
        \end{equation*}
        Second, we use
        \begin{equation*}
            \bignorm{\Delta_h [\nabla (K_{t-s} * g)]}_{L^\infty_x} \leq 2 \bignorm{\nabla (K_{t-s} * g)}_{L^\infty_x} \lesssim_{\alpha, d} \frac{1}{(t-s)^\frac{1-\alpha}{2}} [g]_{C^{0, \alpha}_x},
        \end{equation*}
        again by Lemma \ref{lemma:heat_kernel_diff}. Inserted in \eqref{eq:first_diff}, these yield
        \begin{equation*}
            \begin{aligned}
                \norm{\Delta_h[\nabla v_t]}_{L^\infty_x} & \lesssim_{\alpha, d} |h| \int_0^{(t-|h|^2) \vee 0} \frac{1}{(t-s)^{\frac{2-\alpha}{2}}} [f_s - b_s \cdot \nabla v_s]_{C^{0, \alpha}_x}\, ds \\
                & \quad + \int_{(t-|h|^2) \vee 0}^t \frac{1}{(t-s)^{\frac{1-\alpha}{2}}} [f_s - b_s \cdot \nabla v_s]_{C^{0, \alpha}_x}\, ds.
            \end{aligned}
        \end{equation*}
        Dividing by $|h|^{\alpha}$, we have
        \begin{equation} \label{eq:dv_Holder}
        \begin{aligned}
            \frac{\norm{\Delta_h[\nabla v_t]}_{L^\infty_x}}{|h|^\alpha} \lesssim_{\alpha, d} \int_0^t \biggl(& \frac{|h|^{1-\alpha}}{(t-s)^{\frac{2-\alpha}{2}}} \indicator_{\{|h|^2 < t-s\}} + \frac{|h|^{-\alpha}}{(t-s)^{\frac{1-\alpha}{2}}} \indicator_{\{t - s < |h|^2\}}\biggr) \\
            & \qquad \qquad \times \bigl(\norm{f_s}_{C^{0, \alpha}_{x}} + \norm{b_s}_{C^{0, \alpha}_{x}} \norm{v_s}_{C^{1, \alpha}_{x}}\bigr)\, ds.
        \end{aligned}
        \end{equation}
        For $s \in (0, t)$, define the function
        \begin{equation*} \label{eq:w_definition}
        \begin{aligned}
            & w_h(t - s) \coloneqq 1 + \frac{1}{(t-s)^{\frac{1-\alpha}{2}}} \\
            & \quad + \biggl(\frac{|h|^{1-\alpha}}{(t-s)^{\frac{2-\alpha}{2}}} \indicator_{\{|h|^2 < t-s\}} + \frac{|h|^{-\alpha}}{(t-s)^{\frac{1-\alpha}{2}}} \indicator_{\{t - s < |h|^2\}}\biggr)\indicator_{\{|h| \leq 1\}}.
        \end{aligned}
        \end{equation*}
        The motivation for this definition is to organize the coefficients that appear in the estimates for each term in $\eta_h(t)$ in a single function. Indeed, adding together \eqref{eq:v_bound}, \eqref{eq:dv_bound}, \eqref{eq:large_h} and \eqref{eq:dv_Holder}, we see that
        \begin{equation} \label{eq:pre_gronwall}
            \eta_{h}(t) \lesssim_{\alpha, d} \int_0^t w_h(t - s) \norm{f_s}_{C^{0, \alpha}_{x}}\, ds + \int_0^t w_h(t - s) \norm{b_s}_{C^{0, \alpha}_{x}} \norm{v_s}_{C^{1, \alpha}_{x}}\, ds.
        \end{equation}
        Let us now fix $\varepsilon > 0$. Since $\sup_{0 \neq h \in \R^d} \eta_h(t) = \norm{v_t}_{C^{1, \alpha}_x}$, there exists for all $t \in (0, T)$ some $h^\varepsilon_t \in \R^d \setminus \{0\}$ such that
        \begin{equation*}
            \norm{v_t}_{C^{1, \alpha}_x} \leq \eta_{h^\varepsilon_t}(t) + \varepsilon.
        \end{equation*}
        Inserting this in \eqref{eq:pre_gronwall} yields
        \begin{equation} \label{eq:pre_gronwall_2}
            \norm{v_t}_{C^{1, \alpha}_x} \lesssim_{\alpha, d} \varepsilon + \int_0^t w_{h_t^\varepsilon}(t - s) \norm{f_s}_{C^{0, \alpha}_{x}}\, ds + \int_0^t w_{h_t^\varepsilon}(t - s) \norm{b_s}_{C^{0, \alpha}_{x}} \norm{v_s}_{C^{1, \alpha}_{x}}\, ds
        \end{equation}
        for all $t \in (0, T)$. Denote the first integral on the right-hand side by $I_t$, and let $p$ be such that $\nicefrac{1}{p} + \nicefrac{1}{q} = 1$. Towards applying Proposition~\ref{prop:gronwall}, we claim that $I \in L^\infty_t$ and $\sup_{t \in (0, T)}\norm{w_{h_t^\varepsilon}}_{L^p(0, t)} < \infty$. Beginning with $w_{h_t^\varepsilon}$, we have by Minkowski's inequality that for any $t \in (0, T)$ and $h \in \R^d \setminus \{0\}$, 
        \begin{equation*} \label{eq:w_bound}
            \begin{aligned}
                & \norm{w_h}_{L^p(0, t)} \leq \biggl(\int_0^t \bigl(1 + \frac{1}{r^\frac{(1-\alpha)}{2}}\bigr)^p\, dr\biggr)^\frac{1}{p} \\
                & \qquad + \biggl[|h|^{1-\alpha} \biggl(\int_{|h|^2}^{t\vee |h|^2} \frac{1}{r^{\frac{(2-\alpha)p}{2}}}\, dr\biggr)^{\frac{1}{p}} + |h|^{-\alpha}\biggl(\int_0^{|h|^2} \frac{1}{r^{\frac{(1-\alpha)p}{2}}}\, dr\biggr)^{\frac{1}{p}}\biggr] \indicator_{\{|h| \leq 1\}}.
            \end{aligned}
        \end{equation*}
        Up to some constant, the last two terms in the square bracket can be absolutely bounded by
        \begin{equation*}
            |h|^{1-\alpha} T^{\frac{1}{p} - 1 + \frac{\alpha}{2}} + |h|^{\frac{2}{p}-1},
        \end{equation*}
        and since $p \in (1, 2]$, the factor $|h|^{\nicefrac{2}{p} - 1}$ can be further bounded by $1$ for all $|h| \leq 1$. This means that $\norm{w_h}_{L^p(0, t)}$ is uniformly bounded for all $t \in (0, T)$ and $h \in \R^d\setminus \{0\}$, and consequently
        \begin{equation} \label{eq:w_bound_2}
            \sup_{t \in (0, T)}\norm{w_{h^\varepsilon_t}}_{L^p(0, t)} \leq C(\alpha, q, T) < \infty.
        \end{equation}
        For $I$, we then have by H\"older's inequality that
        \begin{equation} \label{eq:leading_term_bound}
            I_t = \int_0^t w_{h^\varepsilon_t}(t - s) \norm{f_s}_{C^{0, \alpha}_{x}}\, ds \leq \norm{w_{h^\varepsilon_t}}_{L^p(0, t)} \norm{f}_{L^q_t C^{0, \alpha}_{x}} \lesssim_{\alpha, q, T} \norm{f}_{L^q_t C^{0, \alpha}_{x}}
        \end{equation}
        for all $t \in (0, T)$. The modified Gronwall inequality from Proposition \ref{prop:gronwall} applied to \eqref{eq:pre_gronwall_2} now gives
        \begin{equation}\label{eq:c1_bound}
            \begin{aligned}
                \norm{v_t}_{C^{1, \alpha}_x} & \leq \gronwconst\biggl(C(\alpha, d)\sup_{s \in (0, t)}\norm{w_{h^\varepsilon_s}}_{L^p(0, s)}, \norm{b}_{L^q_t C^{0, \alpha}_x}\biggr) \bigl(\varepsilon + C(\alpha, d)\norm{I}_{L^\infty(0, t)}\bigr)\\
                & \leq \gronwconst\biggl(C(\alpha, d, q, T), \norm{b}_{L^q_t C^{0, \alpha}_x}\biggr) \bigl(\varepsilon + C(\alpha, d, q, T) \norm{f}_{L^q_t C^{0, \alpha}_{x}}\bigr),
            \end{aligned}
        \end{equation}
        where $\gronwconst$ is the constant defined in \eqref{eq:gronwconst} and $C(\alpha, d)$ is the implicit constant in~\eqref{eq:pre_gronwall_2}. Here, we have also used \eqref{eq:w_bound_2} and \eqref{eq:leading_term_bound}, and bounded the norms of $b$ and $f$ on the whole interval $(0, T)$. Since $\varepsilon$ was arbitrary, we pass $\varepsilon \to 0$ and denote the right-hand side by~$C_{f}^{(1)}$.
    \end{proof}

    It is clear from \eqref{eq:c1_bound} that $C^{(1)}_{f}$ is continuous in the arguments $\norm{f}_{L^q_t C^{0, \alpha}_{x}}$ and $\norm{b}_{L^q_t C^{0, \alpha}_{x}}$, and that $C^{(1)}_{f} \to 0$ when $\norm{f}_{L^q_t C^{0, \alpha}_{x}} \to 0$. Moreover, since $C^{(1)}_f$ is independent of $\lambda$, passing $\lambda \to \infty$ in \eqref{eq:v_bound} and \eqref{eq:dv_bound} yields $\lim_{\lambda \to \infty} \norm{v}_{L^\infty_t C^1_x} = 0$ (by dominated convergence).
     
    Towards proving higher order H\"older regularity, we next propose the following.

    \begin{claim} \label{claim:2}
        There is a constant $C_{f}^{(2)} = C_{f}^{(2)} \bigl(\alpha, d, q, T, \norm{b}_{L^q_t C^{0, \alpha}_{x}}, \norm{f}_{L^q_t C^{0, \alpha}_{x}}\bigr) \geq 0$ such that $\norm{v}_{L^q_t C^{2, \alpha}_{x}} \leq C_{f}^{(2)}$.
    \end{claim}

    \begin{proof}[Proof of Claim \ref{claim:2}]
        Let $h \in \R^d \setminus \{0\}$. Differentiating \eqref{eq:zvonkin_duhamel} and applying a second-order difference operator gives
        \begin{equation} \label{eq:second_order_diff}
            \begin{aligned}
                \Delta_h^2[\nabla v_t] & = \int_{0}^t \Delta_h^2\bigl[\nabla K_{t-s} * (f_s - b_s \cdot \nabla v_s)\bigr]\, ds \\
                & =  \int_{0}^{(t-|h|^2) \vee 0} \Delta_h^2\bigl[\nabla K_{t-s} * (f_s - b_s \cdot \nabla v_s)\bigr]\, ds \\
                & \quad + \int_{(t-|h|^2) \vee 0}^t \bigl(\Delta_h[\nabla K_{t-s}] * \Delta_h[f_s - b_s \cdot \nabla v_s]\bigr)\, ds \\
                & \eqqcolon I_t^{(1)} + I_t^{(2)}.
            \end{aligned}
        \end{equation}
        By Lemma \ref{lemma:heat_kernel_diff}, we have for any $g \in C^{0, \alpha}(\R^d)$ and $i = 1,...,d$ that
        \begin{equation*} \label{eq:kernel_mvt}
            \begin{aligned}
                \bignorm{\Delta_h^2[\partial_i K_{t-s} * g]}_{L^\infty_x} & = \biggnorm{\int_0^1 \biggl(\int_0^1 \nabla^2 (\partial_i K_{t-s} * g)(\cdot + (l+r)h) h\, dr \biggr)\cdot h \, dl}_{L^\infty_x} \\
                & \leq |h|^2 \norm{\nabla^2 (\partial_i K_t * g)}_{L^\infty_x} \lesssim_{\alpha, d} \frac{|h|^2}{(t-s)^{\frac{3-\alpha}{2}}} [g]_{C^{0, \alpha}_{x}}.
            \end{aligned}
        \end{equation*}
        Using this to estimate $I^{(1)} = I^{(1)}_t(x)$, we obtain
        \begin{equation*}
            \begin{aligned}
                \norm{I^{(1)}_t}_{L^\infty_x} & \leq \int_0^{(t-|h|^2)\vee 0} \norm{\Delta_h^2\bigl[\nabla K_{t-s} * (f_s - b_s \cdot \nabla v_s)\bigr]}_{L^\infty_x}\, ds \\
                & \lesssim_{\alpha, d} |h|^2 \int_0^{(t-|h|^2)\vee 0} \frac{1}{(t-s)^\frac{3 - \alpha}{2}} \bigl[f_s - b_s \cdot \nabla v_s\bigr]_{C^{0, \alpha}_x}\, ds \\
                & = |h|^2 \int_{|h|^2}^{t \vee |h|^2} \frac{1}{r^\frac{3 - \alpha}{2}} \bigl[f_{t-r} - b_{t-r} \cdot \nabla v_{t-r}\bigr]_{C^{0, \alpha}_x}\, dr,
            \end{aligned}
        \end{equation*}
        where we used a change of variables in the last equality. Minkowski's integral inequality gives
        \begin{equation} \label{eq:int_1}
            \begin{aligned}
                \norm{I^{(1)}}_{L^q_t L^\infty_x} & \lesssim_{\alpha, d} |h|^2 \biggl(\int_{0}^T \biggl(\int_{|h|^2}^{t\vee |h|^2} \frac{1}{r^\frac{3 - \alpha}{2}} \bigl[f_{t-r} - b_{t-r} \cdot \nabla v_{t-r}\bigr]_{C^{0, \alpha}_x}\, dr\biggr)^q dt\biggr)^{\frac{1}{q}}\\
                & = |h|^2 \biggl(\int_{|h|^2}^T \biggl(\int_{|h|^2}^t \frac{1}{r^\frac{3 - \alpha}{2}} \bigl[f_{t-r} - b_{t-r} \cdot \nabla v_{t-r}\bigr]_{C^{0, \alpha}_x}\, dr\biggr)^q dt\biggr)^{\frac{1}{q}}\\
                & \leq |h|^2 \int_{|h|^2}^T \frac{1}{r^\frac{3 - \alpha}{2}} \biggl(\int_{r}^T \bigl[f_{t-r} - b_{t-r} \cdot \nabla v_{t-r}\bigr]_{C^{0, \alpha}_x}^q\, dt \biggr)^\frac{1}{q}\, dr.
            \end{aligned}
        \end{equation}
        Since the inner integral can be bounded by
        \begin{equation*}
            \begin{aligned}
                \int_{r}^T \bigl[f_{t-r} - b_{t-r} \cdot \nabla v_{t-r}\bigr]_{C^{0, \alpha}_x}^q\, dt & \leq \int_{0}^T \bigl[f_t - b_t \cdot \nabla v_t\bigr]_{C^{0, \alpha}_x}^q\, dt \\
                & \lesssim_{q} \norm{f}_{L^q_t C^{0, \alpha}_{x}}^q + \norm{b}_{L^q_t C^{0, \alpha}_{x}}^q \norm{v}_{L^\infty_t C^{1, \alpha}_{x}}^q,
            \end{aligned}
        \end{equation*}
        we get that
        \begin{equation*}
            \norm{I^{(1)}}_{L^q_t L^\infty_x} \lesssim_{\alpha, d, q} |h|^2 \bigl(\norm{f}_{L^q_t C^{0, \alpha}_{x}} + \norm{b}_{L^q_t C^{0, \alpha}_{x}} \norm{v}_{L^\infty_t C^{1, \alpha}_{x}}\bigr) \int_{|h|^2}^T \frac{1}{r^\frac{3 - \alpha}{2}}\, dr.
        \end{equation*}
        For the second integral $I^{(2)} = I^{(2)}_t(x)$ in \eqref{eq:second_order_diff}, we have by Young's convolution inequality that
        \begin{equation*}
            \begin{aligned}
                \norm{I^{(2)}_t}_{L^\infty_x} & \leq \int_{(t-|h|^2)\vee 0}^t \norm{\Delta_h[\nabla K_{t-s}]}_{L^1_x} \norm{\Delta_h[f_s - b_s \cdot \nabla v_s]}_{L^\infty_x}\, ds \\
                & \lesssim |h|^\alpha \int_{(t-|h|^2)\vee 0}^t \frac{1}{\sqrt{t-s}} \bigl[f_s - b_s \cdot \nabla v_s\bigr]_{C^{0, \alpha}_x}\, ds \\
                & = |h|^\alpha \int_{0}^{|h|^2 \wedge t} \frac{1}{\sqrt{r}} \bigl[f_{t-r} - b_{t-r} \cdot \nabla v_{t-r}\bigr]_{C^{0, \alpha}_x}\, dr,
            \end{aligned}
        \end{equation*}
        where in the second line we used $\norm{\Delta_h[\nabla K_{t-s}]}_{L^1_x} \leq 2 \norm{\nabla K_{t-s}}_{L^1_x} \lesssim 1 / \sqrt{t-s}$. Applying Minkowski's inequality for integrals again yields
        \begin{equation} \label{eq:int_2}
            \begin{aligned}
                \norm{I^{(2)}}_{L^q_t L^\infty_x} & \lesssim |h|^\alpha \biggl(\int_0^T \biggl(\int_{0}^{|h|^2 \wedge t} \frac{1}{\sqrt{r}} [f_{t-r} - b_{t-r} \cdot \nabla v_{t-r}]_{C^{0, \alpha}_x}\, dr\biggr)^q\, dt \biggr)^{\frac{1}{q}} \\
                & \leq |h|^\alpha \int_{0}^{|h|^2} \frac{1}{\sqrt{r}} \biggl(\int_r^T [f_{t-r} - b_{t-r} \cdot \nabla v_{t-r}]_{C^{0, \alpha}_x}^q\, dt\biggr)^\frac{1}{q}\, dr \\
                & \lesssim_{q} |h|^{\alpha} \biggl(\norm{f}_{L^q_t C^{0, \alpha}_{x}} + \norm{b}_{L^q_t C^{0, \alpha}_{x}} \norm{v}_{L^\infty_t C^{1, \alpha}_{x}}\biggr) \int_{0}^{|h|^2} \frac{1}{\sqrt{r}}\, dr.
            \end{aligned}
        \end{equation}
        Inserting \eqref{eq:int_1} and \eqref{eq:int_2} in \eqref{eq:second_order_diff}, we obtain
        \begin{equation*}
            \begin{aligned}
                \bignorm{\Delta_h^2&[\nabla v]}_{L^q_t L^\infty_x}  \lesssim_{\alpha, d, q} \bigl(\norm{f}_{L^q_t C^{0, \alpha}_{x}} + \norm{b}_{L^q_t C^{0, \alpha}_{x}} \norm{v}_{L^\infty_t C^{1, \alpha}_{x}}\bigr)\\
                & \qquad\qquad\qquad\qquad \times \biggl(|h|^2 \int_{|h|^2}^T \frac{1}{r^\frac{3 - \alpha}{2}}\, dr + |h|^\alpha \int_{0}^{|h|^2} \frac{1}{\sqrt{r}}\, dr\biggr) \\
                & \leq |h|^{1+\alpha} \bigl(\norm{f}_{L^q_t C^{0, \alpha}_{x}} + \norm{b}_{L^q_t C^{0, \alpha}_{x}} \norm{v}_{L^\infty_t C^{1, \alpha}_{x}}\bigr) \biggl(\int_{1}^\infty \frac{1}{r^\frac{3 - \alpha}{2}}\, dr + \int_{0}^{1} \frac{1}{\sqrt{r}}\, dr\biggr).
            \end{aligned}
        \end{equation*}
        Dividing by $|h|^{1+\alpha}$ and inserting the result of Claim \ref{claim:1} gives
        \begin{equation} \label{eq:higher_Holder_bound}
            \frac{\bignorm{\Delta_h^2[\nabla v]}_{L^q_t L^\infty_x}}{|h|^{1+\alpha}} \leq C(\alpha, d, q) \bigl(\norm{f}_{L^q_t C^{0, \alpha}_{x}} + \norm{b}_{L^q_t C^{0, \alpha}_{x}} C^{(1)}_{f} \bigr).
        \end{equation}
        Denoting the right-hand side by $C_{f}^{(2)}$, the claim is proved by the equivalence \eqref{eq:zygmund_norm} of the H\"older and the Zygmund spaces for non-integer exponents.
    \end{proof}

    Since $C^{(2)}_{f}$ is given by the right-hand side of \eqref{eq:higher_Holder_bound}, we easily infer that $C^{(2)}_{f}$ is continuous with respect to $\norm{f}_{L^q_t C^{0, \alpha}_{x}}$ and $\norm{b}_{L^q_t C^{0, \alpha}_{x}}$, and that the limit \eqref{eq:apriori_limit_f} holds for $C^{(2)}_{f}$ as well.
    
    If $v$ is a solution of \eqref{eq:model_eq_weak}, then the weak derivative is given by the right-hand side of equation, and can thus be bounded by
    \begin{equation*}
        \begin{aligned}
            \norm{\partial_t v}_{L^q_t C^{0, \alpha}_x} & = \norm{\Delta v + f - b \cdot \nabla v - \lambda v}_{L^q_t C^{0, \alpha}_x} \\
            & \leq \norm{v}_{L^q_t C^{2, \alpha}_x} + \norm{f}_{L^q_t C^{0, \alpha}_x} + \norm{b}_{L^q_t C^{0, \alpha}_x} \norm{v}_{L^\infty_t C^{1, \alpha}_x} + \lambda \norm{v}_{L^q_t C^{0, \alpha}_x} \\
            & \leq (1+\lambda) C_{f}^{(2)} + \norm{f}_{L^q_t C^{0, \alpha}_x} + C_{\lambda, f}^{(1)} \norm{b}_{L^q_t C^{0, \alpha}_x} \eqqcolon C_f^{(3)}.
        \end{aligned}
    \end{equation*}
    Setting $C_{f} \coloneqq C_{f}^{(1)} + C_{f}^{(2)} + C_{f}^{(3)}$, we infer $\norm{v}_{\mathbb{W}^{1, q}_{2, \alpha}} \leq C_f$, as well as the limit~\eqref{eq:apriori_limit_f}. Uniqueness of solutions is now a consequence of the linearity of the equation.
\end{proof}

\subsection{Existence and stability}

We move on to the question about existence of solutions of \eqref{eq:model_eq_classical}, and stability with respect to perturbations of the coefficients $b$ and $f$.

\begin{theorem} \label{thm:pde_wellposedness}
    Assume that $b$ and $f$ satisfy \eqref{eq:h1}--\eqref{eq:h2}, and let $\lambda > 0$. Then there exists a unique solution $v \in \W^{1, q}_{2, \alpha}(T)$ of equation \eqref{eq:model_eq_classical}. For increasing values of $\lambda$, the solutions admit the limit
    \begin{equation} \label{eq:model_sol_limit_lambda}
        \lim_{\lambda \to \infty} \norm{v}_{L^\infty_t C^{1}_x} = 0.
    \end{equation}
    Furthermore, if $v^n$ and $v$ are the unique solutions in $\W^{1, q}_{2, \alpha}(T)$ corresponding to convergent sequences of coefficients
    \begin{equation} \label{eq:convergent_coefficients}
    \begin{aligned}
        & (b^n)_{n \in \N} \subset L^q_t C^{0, \alpha}_x; \qquad b^n \to b\ \text{in}\ L^q_t C^{0, \alpha}_x,\\
        & (f^n)_{n \in \N} \subset L^q_t C^{0, \alpha}_x; \qquad f^n \to f\ \text{in}\ L^q_t C^{0, \alpha}_x,
    \end{aligned}
    \end{equation}
    then
    \begin{equation} \label{eq:stability}
        \lim_{n \to \infty} v^n = v \quad \text{in}\quad W^{1, q}_{2, \alpha}(T).
    \end{equation}
\end{theorem}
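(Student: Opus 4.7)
The plan is to deduce uniqueness and the limit \eqref{eq:model_sol_limit_lambda} directly from Proposition \ref{prop:apriori_regularity}, to obtain existence by a regularization argument, and to establish stability \eqref{eq:stability} by applying the a priori estimate to the difference equation solved by $v^n - v$. In this sense, the heavy lifting has already been carried out.

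For \emph{existence}, I would approximate $b$ and $f$ by smooth coefficients $b^n, f^n \in C^\infty_b(\R^d \times [0, T])$ obtained through a standard space-time mollification, chosen so that $\sup_n (\norm{b^n}_{L^q_t C^{0, \alpha}_x} + \norm{f^n}_{L^q_t C^{0, \alpha}_x}) < \infty$ and $b^n \to b$, $f^n \to f$ in $L^q_t C^{0, \alpha'}_x$ for every $\alpha' \in (0, \alpha)$; strong convergence in $L^q_t C^{0, \alpha}_x$ itself cannot generally be expected, since the Hölder spaces are non-separable. For smooth coefficients, classical linear parabolic theory (or, alternatively, a direct Picard iteration on the Duhamel formula \eqref{eq:zvonkin_duhamel}) yields smooth solutions $v^n \in \W^{1, q}_{2, \alpha}(T)$, and Proposition \ref{prop:apriori_regularity} provides a uniform bound $\sup_n \norm{v^n}_{\W^{1, q}_{2, \alpha}(T)} < \infty$. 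Using the compact embedding $C^{k, \alpha}(K) \hookrightarrow C^{k, \alpha'}(K)$ on compact sets $K \subset \R^d$, combined with an Arzelà–Ascoli argument in time via the uniform control on $\partial_t v^n$ in $L^q_t C^{0, \alpha}_x$, one extracts a subsequence converging in $C([0, T]; C^{1, \alpha'}_{x, \loc}) \cap L^q_t C^{2, \alpha'}_{x, \loc}$ to a limit $v$. Passage to the limit in the weak formulation \eqref{eq:model_eq_weak} is then straightforward, and a final application of Proposition \ref{prop:apriori_regularity} to $v$ itself (together with lower semicontinuity of the Hölder norms) confirms that $v \in \W^{1, q}_{2, \alpha}(T)$.

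For \emph{stability}, let $v^n$ and $v$ denote the unique solutions corresponding to $(b^n, f^n)$ and $(b, f)$. By linearity, the difference $w^n \coloneqq v^n - v$ satisfies
\begin{equation*}
    \partial_t w^n + b^n \cdot \nabla w^n + \lambda w^n = \Delta w^n + F^n
\end{equation*}
with zero initial data, in the sense of Definition \ref{def:model_eq_weak}, where $F^n \coloneqq (f^n - f) + (b - b^n) \cdot \nabla v$. Applying Proposition \ref{prop:apriori_regularity} to this equation (with drift $b^n$ and source $F^n$) gives $\norm{w^n}_{\W^{1, q}_{2, \alpha}(T)} \leq C_{F^n}$. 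Since $C^{0, \alpha}(\R^d)$ is a Banach algebra and $v \in L^\infty_t C^{1, \alpha}_x$, we have
\begin{equation*}
    \norm{F^n}_{L^q_t C^{0, \alpha}_x} \lesssim \norm{f^n - f}_{L^q_t C^{0, \alpha}_x} + \norm{b - b^n}_{L^q_t C^{0, \alpha}_x} \norm{v}_{L^\infty_t C^{1, \alpha}_x} \longrightarrow 0
\end{equation*}
under the hypothesis \eqref{eq:convergent_coefficients}, while $\norm{b^n}_{L^q_t C^{0, \alpha}_x}$ remains bounded. The continuity of $C_F$ in its arguments, together with the vanishing limit \eqref{eq:apriori_limit_f}, then forces $C_{F^n} \to 0$, which establishes \eqref{eq:stability}.

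I expect the main obstacle to lie in the extraction step of the existence proof: because $C^{0, \alpha}$ is neither reflexive nor separable, weak-$*$ compactness is not directly available in the norm topology of $\W^{1, q}_{2, \alpha}(T)$, and one must instead work in the strictly weaker spaces $L^q_t C^{k, \alpha'}_{x, \loc}$ for $\alpha' < \alpha$ and upgrade the regularity of the limit via the uniform a priori bounds. The bookkeeping is standard but requires some care.
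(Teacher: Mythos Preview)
Your proposal is correct. The stability argument is essentially identical to the paper's (you write the difference equation with drift $b^n$ and source $(f^n-f)+(b-b^n)\cdot\nabla v$; the paper uses drift $b$ and source $(f^n-f)-(b^n-b)\cdot\nabla v^n$; either decomposition works once the a priori bound on $\norm{v}_{L^\infty_t C^{1,\alpha}_x}$ or $\norm{v^n}_{L^\infty_t C^{1,\alpha}_x}$ is in hand).

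For existence the paper takes a shorter route than you do: it proves the stability estimate \emph{first}, and then uses it to show that the smooth approximations $(v^\varepsilon)_\varepsilon$ form a Cauchy sequence directly in $\W^{1,q}_{2,\alpha}(T)$. This avoids the compactness extraction in $C^{k,\alpha'}_{x,\loc}$ and the subsequent upgrade of regularity that you anticipate as the main obstacle. Concretely, for any two indices $\varepsilon_1,\varepsilon_2$ the difference $v^{\varepsilon_1}-v^{\varepsilon_2}$ solves an equation of the form \eqref{eq:model_eq_classical} with source controlled by $\norm{f^{\varepsilon_1}-f^{\varepsilon_2}}_{L^q_t C^{0,\alpha}_x}+\norm{b^{\varepsilon_1}-b^{\varepsilon_2}}_{L^q_t C^{0,\alpha}_x}$, so Proposition~\ref{prop:apriori_regularity} and \eqref{eq:apriori_limit_f} give Cauchyness immediately.

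Your observation that mollifications need not converge in $L^q_t C^{0,\alpha}_x$ (only in $L^q_t C^{0,\alpha'}_x$ for $\alpha'<\alpha$) is well taken; the paper glosses over this. It does not, however, force you into a compactness argument: in the paper's scheme one simply runs the Cauchy estimate at level $\alpha'$ to obtain a limit in $\W^{1,q}_{2,\alpha'}(T)$, and then invokes the uniform $\W^{1,q}_{2,\alpha}(T)$ bound from Proposition~\ref{prop:apriori_regularity} (which depends only on $\sup_\varepsilon\norm{b^\varepsilon}_{L^q_t C^{0,\alpha}_x}$ and $\sup_\varepsilon\norm{f^\varepsilon}_{L^q_t C^{0,\alpha}_x}$) together with lower semicontinuity of the H\"older seminorms to place the limit in $\W^{1,q}_{2,\alpha}(T)$. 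This is the same upgrade step you already have, but the preceding convergence is obtained without Arzel\`a--Ascoli.
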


\begin{proof}
    Suppose first that $v^n$ and $v$ are solutions corresponding to sequences of coefficients as in \eqref{eq:convergent_coefficients}. Then the difference $\tilde{v}^n \coloneqq v^n - v$ satisfies
    \begin{equation*}
        \partial_t \tilde{v}^n + b \cdot \nabla \tilde{v}^n + \lambda \tilde{v}^n = \Delta \tilde{v}^n + (f^n - f) + (b^n - b)\cdot \nabla v^n,
    \end{equation*}
    in the sense of Definition~\ref{def:model_eq_weak}. Proposition~\ref{prop:apriori_regularity} implies that $\lim_{n \to \infty} \norm{\tilde{v}^n}_{\mathbb{W}^{1, q}_{2, \alpha}} = 0$ in view of
    \begin{equation*}
        \bignorm{(f^n - f) + (b^n - b)\cdot \nabla v^n}_{L^q_t C^{0, \alpha}_x} \leq \norm{f^n - f}_{L^q_t C^{0, \alpha}_x} + \norm{b^n - b}_{L^q_t C^{0, \alpha}_x} \norm{v}_{L^\infty_t C^{1, \alpha}_x} \to 0
    \end{equation*}
    for $n \to \infty$. This proves \eqref{eq:stability}.
    
    To obtain existence for given coefficients $b$ and $f$, we introduce mollified coefficients $b^\varepsilon = (b * \rho^\varepsilon)$ and $f^\varepsilon = (f * \rho^\varepsilon)$, where $(\rho^\varepsilon)_{\varepsilon > 0} = (\rho^\varepsilon(x, t))_{\varepsilon > 0}$ is a family of standard mollifiers on $\R^{d+1}$, and we have extended $b$ and $f$ by zero outside $\R^d \times [0, T]$ when taking convolutions. Then for any $\varepsilon > 0$, there is a unique classical (smooth) solution $v^\varepsilon$ of \eqref{eq:model_eq_classical} with coefficients $b^\varepsilon$ and $f^\varepsilon$ (one can check that this solution also satisfies \eqref{eq:zvonkin_duhamel}). Furthermore, the mollified coefficients both converge in ${L^q_t C^{0, \alpha}_x}$ as ${\varepsilon \to 0}$, which means that $(v^\varepsilon)_{\varepsilon > 0}$ is a Cauchy sequence in $\mathbb{W}^{1, q}_{2, \alpha}(T)$. Denoting the limit by $v$, we easily verify that it is indeed a solution of~\eqref{eq:model_eq_classical}. The limit \eqref{eq:model_sol_limit_lambda} now follows directly from the same a priori limit.
\end{proof}

\section{Stochastic flows for drifts in \texorpdfstring{$L^q_t C^{0, \alpha}_x$}{TEXT}} \label{sec:sde}

In this section, we consider the forward SDE
\begin{equation} \label{eq:sde_integral_form}
    X_t^{x, s} = x + \int_s^t b_r(X_r^{x, s})\, dr + \int_s^t dW_r
\end{equation}
and the backward SDE
\begin{equation} \label{eq:sde_backward_integral_form}
    X_s^{x, t} = x - \int_s^t b_r(X_r^{x, t})\, dr - \int_s^t \hat{d}W_r,
\end{equation}
where we recall that $\hat{d}W_t$ denotes the backward Itô differential (here, we simply have $\int_s^t \hat{d}W_r = W_s - W_t$). As before, $(W_t)_{t \geq 0}$ is a $d$-dimensional Brownian motion on ${(\Omega, \F, \P)}$. The forward equation has to be solved on $(s, T)$, with a solution being a continuous stochastic process $(X_t^{x, s})_{t \in [s, T]}$ which is $\F_{s, t}$-adapted and satisfies \eqref{eq:sde_integral_form} $\P$-a.s. On the other hand, the backward equation is solved on~$(0, t)$, for a continuous process $(X_s^{x, t})_{s \in [0, t]}$ which is $\F_{s, t}$-adapted and satisfies \eqref{eq:sde_backward_integral_form}~\mbox{$\P$-a.s.} For both equations, uniqueness will always be understood in the pathwise sense.

In Sec.~\ref{sec:zvonkin}, we begin by showing that \eqref{eq:sde_integral_form}--\eqref{eq:sde_backward_integral_form} can be transformed to equations with more regular coefficients by a Zvonkin-type transformation. Then, in Sec.~\ref{sec:forward_backward_system}, we show that the transformed equations generate unique forward/backward flows of \diffeomorphisms. This is used in Sec.~\ref{sec:proof_main_flow} to prove Theorem~\ref{thm:main_flow}.

\subsection{A Zvonkin-type transformation} \label{sec:zvonkin}

We carry out some formal calculations which expand on those outlined in Sec.~\ref{sec:background_approach} in the introduction. For given ${x \in \R^d}$ and ${s \in [0, T]}$, assume that $(X_t^{x, s})_{t \in [s, T]}$ is a continuous $\R^d$-valued stochastic process which satisfies~\eqref{eq:sde_integral_form}. Let ${v\colon \R^d \times [0, T] \to \R^d}$ be the function whose components are solutions of the backward PDEs
\begin{equation} \label{eq:zvonkin_transform_componentwise}
    \begin{cases}
        \partial_t v^i + b \cdot \nabla v^i + \frac{1}{2} \Delta v^i = \lambda v^i - b^i, \\
        v^i_{t = T} \equiv 0
    \end{cases}
\end{equation}
for $i = 1, ..., d$. Then the process $(v_t(X_t^{x, s}))_{t \in [s, T]}$ is given by
\begin{equation} \label{eq:forward_ito_transformation_componentwise}
    \begin{aligned}
        v_t^i(X_t^{x, s}) & = v_s^i(x)+ \int_s^t \bigl(\partial_r v_r^i + b_r\cdot \nabla v_r^i + \frac{1}{2}\Delta v_r^i \bigr)(X_r^{x, s})\, dr \\
        &\quad  + \int_s^t \nabla v_r^i(X_r^{x, s})\cdot dW_r \\
        & = v_s^i(x) + \int_s^t \bigl(\lambda v_r^i - b_r^i\bigr)(X_r^{x, s})\, dr + \int_s^t \nabla v_r^i(X_r^{x, s})\cdot dW_r
    \end{aligned}
\end{equation}
for each component $v^i$ of $v$. Now set $g_t(x) \coloneqq x + v_t(x)$ and define 
\begin{equation*} \label{eq:forward_transformation}
    Y_t^{y, s} \coloneqq g_t(X_t^{x, s}), \qquad \text{where} \qquad  y = g_s(x).    
\end{equation*}
Using the fact that $X_t^{x, t}$ satisfies \eqref{eq:sde_integral_form} combined with \eqref{eq:forward_ito_transformation_componentwise}, we obtain
\begin{equation*} \label{eq:zvonkin_process}
    \begin{aligned}
        \bigl(Y_t^{y, s}\bigr)^i & = y^i + \lambda \int_s^t v_r^i(X_r^{x, s})\, dr + \int_s^t dW_r^i + \int_s^t \nabla v_r^i(X_r^{x, s})\cdot dW_r \\
        & = y^i + \lambda \int_s^t v_r^i(g_r^{-1}(Y_r^{y, s}))\, dr + \int_s^t dW_r^i + \int_s^t \nabla v_r^i(g_r^{-1}(Y_r^{y, s}))\cdot dW_r
    \end{aligned}
\end{equation*}
for each component $\bigl(Y_t^{x, s}\bigr)^i$ of $Y_t^{x, s}$. If we define a new set of coefficients
\begin{equation} \label{eq:forward_tranformed_coeffs}
    \tilde{b}_t \coloneqq \lambda v_t \circ g_t^{-1}, \qquad \tilde{\sigma}_t \coloneqq I+\nabla v_t \circ g_t^{-1},
\end{equation}
where $I$ denotes the identity matrix, the equation for $Y_t^{y, s}$ can be compactly written as
\begin{equation} \label{eq:zvonkin_process_forward_renamed}
    Y_t^{y, s} = y + \int_s^t \tilde{b}_r(Y_r^{y, s})\, dr + \int_s^t \tilde{\sigma}_r(Y_r^{y, s})\,dW_r.
\end{equation}

The above calculations can be rigorously justified at this point, provided that $b$ satisfies hypothesis \eqref{eq:h1}. Then by Theorem~\ref{thm:pde_wellposedness}, for any $\lambda > 0$ there exists a unique solution $v = (v^1, ..., v^d)$ of \eqref{eq:forward_ito_transformation_componentwise} which belongs to $\W^{2, \alpha}_{1, q}(T)$. In view of~\eqref{eq:model_sol_limit_lambda}, we can choose $\lambda$ large enough so that the function $g_t(x) = x + v_t(x)$ is invertible on~$\R^d$ for any $t \in [0, T]$. Indeed, since $\norm{v^i}_{L^\infty_t C^{1}_x}$ can be made arbitrarily small at the expense of $\lambda$  for $i = 1, ..., d$, the Jacobian matrix of $g$ will for large enough $\lambda$ become strictly diagonally dominant and therefore nonsingular, uniformly in space and time. By Hadamard's theorem~\cite[Theorem~V.59]{protter_2005}, this implies that $g$ is a \diffeomorphism on $\R^d$ for all $t \in [0, T]$. Furthermore, as a consequence of the regularity of $v$, we see that ${x \mapsto g_t(x)}$ is a continuously differentiable function with $\alpha$-H\"older continuous derivatives, uniformly in time. The same holds for $x \mapsto g_t^{-1}(x)$, by smoothness of the matrix inversion operator for nonsingular matrices. The application of Itô's formula in \eqref{eq:forward_ito_transformation_componentwise} is valid due to the regularity of~$v$ (this can be proved by approximation, see e.g.~\mbox{\cite[Theorem 3.7]{krylov_rockner_2005}}).

As a consequence of the regularity of $v$ and $g$, we can write down the following corollary.

\begin{corollary} \label{corr:forward_coeff_regularity}
    Assume that $b$ satisfies \eqref{eq:h1}. Then the coefficients in \eqref{eq:zvonkin_process_forward_renamed} are bounded, measurable, and fulfill
    \begin{equation*}
        \tilde{b} \in L^\infty((0, T); C^{1, \alpha}(\R^d;\R^d)), \qquad \tilde{\sigma} \in L^q((0, T); C^{1, \alpha}(\R^d; \R^{d\times d})). 
    \end{equation*}
\end{corollary}

Let us also consider a process $(X_s^{x, t})_{s \in [0, t]}$ satisfying the backward equation~\eqref{eq:sde_backward_integral_form}. If now $\hat{v}\colon \R^d \times [0, T] \to \R^d$ solves the forward system of PDEs
\begin{equation} \label{eq:zvonkin_transform_backward}
    \begin{cases}
        \partial_t \hat{v} + b \cdot \nabla \hat{v} + b = \frac{1}{2} \Delta \hat{v} + \lambda \hat{v}, \\
        v_{t = 0} \equiv 0,
    \end{cases}
\end{equation}
(understood \componentwise as in \eqref{eq:zvonkin_transform_componentwise}), then using Itô's formula in the backward variable for the composition $s \mapsto (\hat{v}_s \circ X_s^{x, t})$ yields
\begin{equation} \label{eq:backward_ito_transformation}
    \begin{aligned}
        \hat{v}_s(X_s^{x, t}) & = \hat{v}_t(x) - \int_s^t \bigl(\partial_r \hat{v}_r + b_r \cdot \nabla \hat{v}_r - \frac{1}{2} \Delta \hat{v}_r\bigr)(X_r^{x, t})\, dr \\
        & \quad - \int_s^t \nabla \hat{v}_r(X_r^{x, t})\cdot \hat{d}W_r \\
        & = \hat{v}_t(x) - \int_s^t \bigl(\lambda\hat{v}_r - b_r\bigr)(X_r^{x, t})\, dr - \int_s^t \nabla \hat{v}_r(X_r^{x, t})\cdot \hat{d}W_r,
    \end{aligned}
\end{equation}
where we have used that $\hat{v}$ is a solution of the system \eqref{eq:zvonkin_transform_backward}. Setting $\hat{g}_t(x) \coloneqq x + \hat{v}_t(x)$ allows us to rewrite \eqref{eq:backward_ito_transformation} as
\begin{equation*}
    \hat{g}_s(X_s^{x, t}) = \hat{g}_t(x) - \lambda \int_s^t \hat{v}_r(X_r^{x, t})\, dr - \int_s^t \bigl(I + \nabla \hat{v}_r(X_r^{x, t})\bigr)\cdot \hat{d}W_r.
\end{equation*}
Similarly to the forward equation, we define $\hat{Y}_s^{y, t} \coloneqq \hat{g}_s(X_s^{x, t})$ for $y = \hat{g}_t(x)$ and new coefficients
\begin{equation*} \label{eq:backward_tranformed_coeffs}
    \hat{b}_t \coloneqq \lambda \hat{v}_t \circ \hat{g}_t^{-1}, \qquad \hat{\sigma}_t \coloneqq I+\nabla \hat{v}_t \circ \hat{g}_t^{-1}.
\end{equation*}
Thus, we have obtained the system
\begin{equation} \label{eq:zvonkin_process_backward_renamed}
    \hat{Y}_s^{y, t} = y - \int_s^t \hat{b}_r(\hat{Y}_r^{y, t})\, dr - \int_s^t \hat{\sigma}_r(\hat{Y}_r^{y, t})\, \hat{d}W_r
\end{equation}
for the backward process $(\hat{Y}_s^{y, t})_{s \in [0, t]}$. Based on the previous discussion, we see that an analogue of Corollary~\ref{corr:forward_coeff_regularity} holds for the backward coefficients.

\begin{corollary} \label{corr:backward_coeff_regularity}
    Assume that $b$ satisfies \eqref{eq:h1}. Then the coefficients in \eqref{eq:zvonkin_process_backward_renamed} are bounded, measurable, and fulfill
    \begin{equation*}
        \hat{b} \in L^\infty((0, T); C^{1, \alpha}(\R^d; \R^d)), \qquad \hat{\sigma} \in L^q((0, T); C^{1, \alpha}(\R^d; \R^{d\times d})). 
    \end{equation*}
\end{corollary}

\subsection{Forward/backward equations with coefficients in \texorpdfstring{$L^q_t C^{1, \alpha}_x$}{TEXT}} \label{sec:forward_backward_system}

Based on the preceding calculations in Sec.~\ref{sec:zvonkin}, we will assume that we are given deterministic bounded and measurable functions $a$ and $\sigma$ which satisfy
\begin{equation} \label{eq:h3}
    \tag{H3}
    a \in L^1((0, T); C^{1, \alpha}(\R^d; \R^d)), \quad \sigma \in L^2((0, T); C^{1, \alpha}(\R^d; \R^{d\times d})), \quad \alpha \in (0, 1).
\end{equation}
Due to Corollaries \ref{corr:forward_coeff_regularity} and \ref{corr:backward_coeff_regularity}, the coefficients $\tilde{b}, \tilde{\sigma}$ of the forward equation \eqref{eq:zvonkin_process_forward_renamed}, and the coefficients $\hat{b}, \hat{\sigma}$ of the backward equation \eqref{eq:zvonkin_process_backward_renamed}, both fulfill \eqref{eq:h3}. Since we shall not be interested in optimizing the temporal regularity of solutions, we only assume ${a \in L^1_t C^{1, \alpha}_x}$ and ${\sigma \in L^2_t C^{1, \alpha}_x}$ rather than higher integrability in time: these assumptions represent the minimal integrability conditions necessary for the forthcoming results to hold.

Consider the forward model equation
\begin{equation} \label{eq:general_system_forward}
    Z_t^{x, s} = x + \int_s^t a_r(Z_r^{x, s})\, dr + \int_s^t \sigma_r(Z_r^{x, s})\, dW_r
\end{equation}
for $0 \leq s \leq t \leq T$ and $x \in \R^d$, where the solution is denoted by $Z_t$ so that it is easily distinguished from the solution of the original SDE \eqref{eq:sde_integral_form}.  The following result is due to H.~Kunita (see Theorems 3.4.1, 4.5.1 and 4.6.5 in \cite{kunita_1990}).

\begin{theorem} \label{thm:kunita_flow_theorem}
    For any $x \in \R^d$ and $s \in [0, T]$, there exists a unique solution $(Z_t^{x, s})_{t \in [s, T]}$ of equation \eqref{eq:general_system_forward}. Moreover, for any~${\beta < \alpha}$, the system of solutions $(Z_t^{x, s}\setsep 0 \leq s \leq t \leq T,\, x \in \R^d)$ has a modification, denoted by $Z = Z_{s, t}(x)$, which is a forward stochastic flow of $C^{1, \beta}$-\diffeomorphisms. For any $x \in \R^d$ and $s \in [0, T]$, the process $(Z_{s, t}(x))_{t \in [s, T]}$ is a forward $C^{1, \beta}$-\semimartingale.
\end{theorem}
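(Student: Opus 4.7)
The plan is to follow the route laid out by Kunita in \cite{kunita_1990}, since under hypothesis \eqref{eq:h3} both coefficients are Lipschitz in~$x$ uniformly in~$t$ (with Lipschitz constants in $L^1_t$ and $L^2_t$ respectively), so the only work is to confirm that the classical arguments go through with time-integrable bounds on the spatial norms, and to extract $C^{1,\beta}$-regularity in~$x$ from $C^{1,\alpha}$-regularity of the coefficients.

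First I would establish existence and uniqueness of strong solutions of \eqref{eq:general_system_forward} for each fixed $(s,x)$ by a standard Picard iteration. The key moment estimates, obtained via the Burkholder--Davis--Gundy inequality combined with Gronwall's lemma, are of the form
\begin{equation*}
    \E\biggl[\sup_{t\in[s,T]}|Z_t^{x,s}-Z_t^{y,s}|^{p}\biggr]\lesssim_{p}|x-y|^{p}\exp\!\Bigl(C\,\bigl(\norm{a}_{L^1_tC^{1,\alpha}_x}^{p}+\norm{\sigma}_{L^2_tC^{1,\alpha}_x}^{p}\bigr)\Bigr)
\end{equation*}
for any $p\ge 2$, together with analogous bounds for differences in the parameter $s$ and for the process itself. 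Kolmogorov's continuity theorem (in the $d+2$ variables $(s,t,x)$) then yields a jointly continuous modification $Z_{s,t}(x,\omega)$. The flow property $Z_{s,t}=Z_{r,t}\circ Z_{s,r}$ follows from pathwise uniqueness together with the semigroup property of the SDE.

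To obtain differentiability of $x\mapsto Z_{s,t}(x)$, I would formally differentiate \eqref{eq:general_system_forward} with respect to $x$ to get that the Jacobian $J_{s,t}(x)\coloneqq \nabla Z_{s,t}(x)$ should satisfy the linear matrix SDE
\begin{equation*}
    J_{s,t}(x)=I+\int_s^t \nabla a_r(Z_r^{x,s})J_{s,r}(x)\,dr+\sum_{k=1}^d\int_s^t \nabla\sigma^{\cdot k}_r(Z_r^{x,s})J_{s,r}(x)\,dW_r^k,
\end{equation*}
with coefficients that are $\alpha$-Hölder continuous in~$x$ and integrable in~$t$ (in $L^1$ resp.\ $L^2$). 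Justification of this equation proceeds by studying difference quotients: one shows, again via BDG and Gronwall, that the quotient $(Z_{s,t}(x+h)-Z_{s,t}(x))/|h|$ converges in $L^p$ to the solution of this linear equation as $|h|\to 0$. The $\alpha$-Hölder regularity of $\nabla a$ and $\nabla\sigma$ transfers to $\beta$-Hölder regularity of $J_{s,t}(\cdot)$, for any $\beta<\alpha$, via moment bounds on increments of $J_{s,t}(x)-J_{s,t}(y)$ and a second application of Kolmogorov's continuity theorem (where the loss $\beta<\alpha$ is the standard cost of passing from moment bounds to almost-sure continuity).

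The main obstacle, and the reason the full \diffeomorphism property is nontrivial, is invertibility of $Z_{s,t}(\cdot)$. Following Kunita, I would construct a backward flow by running the time-reversed version of \eqref{eq:general_system_forward}, viewed as a backward SDE driven by $\hat{d}W$, and carrying out the parallel program: existence/uniqueness, continuous modification, differentiability. Pathwise uniqueness, applied simultaneously in both time directions, forces the backward flow to be the spatial inverse of the forward flow; combined with joint continuity of both flows in all three variables, this upgrades the forward flow from a family of $C^{1,\beta}$-maps to a genuine flow of $C^{1,\beta}$-\diffeomorphisms. The \semimartingale property of $(Z_{s,t}(x))_{t\in[s,T]}$ and of its derivatives is then immediate from the SDEs they solve, with continuous adapted bounded-variation parts and continuous martingale parts.
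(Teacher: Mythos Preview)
The paper does not prove this theorem at all: it is quoted directly from Kunita's monograph (Theorems~3.4.1, 4.5.1 and 4.6.5 in \cite{kunita_1990}), as stated in the sentence preceding the theorem. Your sketch is a faithful outline of Kunita's route and is essentially correct.

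One caution on the invertibility step: the backward SDE obtained by running \eqref{eq:general_system_forward} with the \emph{same} coefficients $a,\sigma$ against $\hat{d}W$ is \emph{not} in general the spatial inverse of the forward flow (the It\^o correction spoils this; the paper itself flags this point just before Thm.~\ref{thm:kunita_flow_theorem_backward}). So ``pathwise uniqueness in both time directions'' does not literally identify that backward flow with $Z_{s,t}^{-1}$. Kunita's actual argument either (i) proves the homeomorphism property directly from moment bounds on $|Z_{s,t}(x)-Z_{s,t}(y)|^{-p}$ (injectivity) and $(1+|Z_{s,t}(x)|^2)^{-p}$ (surjectivity via one-point compactification), then upgrades to diffeomorphism using nonsingularity of the Jacobian $J_{s,t}$; or (ii) passes to Stratonovich form, where the time-reversed equation \emph{does} generate the inverse. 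Either fix is routine, but as written your invertibility paragraph is slightly off.
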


\begin{remark}
    Kunita's notation in \cite{kunita_1990} differs from the one we use here; in particular, note that
    \begin{equation*}
        a_t(x)\, dt + \sigma_t(x)\, dW_t = F(x, dt)
    \end{equation*}
    and $B^{k, \beta}_b$ is exactly the space of functions with $C^{k, \beta}$-norm integrable over $[0, T]$.
\end{remark}

Next, we prove a result concerning the approximation of coefficients and convergence of the flow given hypothesis \eqref{eq:h3}.

\begin{lemma} \label{lemma:forward_stability}
    Let $(Z^n)_{n \in \N}$ be forward stochastic flows of $C^{1, \beta}$-\diffeomorphisms generated by \eqref{eq:general_system_forward} with coefficients
    \begin{equation}\label{eq:general_convergent_coeffs}
        \begin{aligned}
            & (a^n)_{n \in \N} \subset L^1_t C^{1, \alpha}_x;\qquad \lim_{n \to \infty} a^n = a\quad \text{in}\quad L^1_t C^{1}_x, \\
            & (\sigma^n)_{n \in \N} \subset L^2_t C^{1, \alpha}_x;\qquad \lim_{n \to \infty} \sigma^n = \sigma \quad \text{in}\quad L^2_t C^{1}_x.
        \end{aligned}
    \end{equation}
    Then for any $p > 0$, the sequence of flows $(Z^n)_n$ converges to $Z$ in the sense that
    \begin{align}
        \lim_{n \to \infty} \sup_{x \in \R^d} \sup_{s \in (0, T)} \E \biggl[ \sup_{t \in (s, T)} |Z_{s, t}^n(x) - Z_{s, t}(x)|^p\biggr] = 0, \label{eq:flow_stability} \\
        \lim_{n \to \infty} \sup_{x \in \R^d} \sup_{s \in (0, T)} \E \biggl[\sup_{t \in (s, T)} \norm{\nabla Z_{s, t}^n(x) - \nabla Z_{s, t}(x)}^p\biggr] = 0. \label{eq:gradient_flow_stability}
    \end{align}
\end{lemma}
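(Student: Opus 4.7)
The plan is to carry out two successive applications of the Burkholder--Davis--Gundy (BDG) inequality followed by Gr\"onwall, first for the SDE \eqref{eq:general_system_forward} itself (for \eqref{eq:flow_stability}) and then for the linear matrix SDE satisfied by the Jacobian $\nabla Z_{s,t}(x)$ (for \eqref{eq:gradient_flow_stability}). Uniformity in $x\in\R^d$ and $s\in[0,T]$ will follow automatically, since every spatial norm appearing in the estimates is of $L^\infty_x$-type and all time integrals are controlled by the norms in \eqref{eq:h3} and \eqref{eq:general_convergent_coeffs}.

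For \eqref{eq:flow_stability}, fixing $x,s$ and setting $\delta Z_t:=Z^n_{s,t}(x)-Z_{s,t}(x)$, I would subtract the two SDEs and split
\begin{equation*}
    a^n_r(Z^n_{s,r})-a_r(Z_{s,r}) = [a^n_r-a_r](Z^n_{s,r}) + [a_r(Z^n_{s,r})-a_r(Z_{s,r})],
\end{equation*}
bounding the first piece by $\|a^n_r-a_r\|_{C^0_x}$ and the second by $\|\nabla a_r\|_{C^0_x}\,|\delta Z_r|$; analogously for $\sigma$. After BDG on the stochastic term and Jensen's inequality, I would arrive at a Gr\"onwall inequality for $t\mapsto \E[\sup_{t'\in[s,t]}|\delta Z_{t'}|^p]$ whose source is bounded by $\|a^n-a\|_{L^1_tC^0_x}^p + \|\sigma^n-\sigma\|_{L^2_tC^0_x}^p\to 0$ and whose Gr\"onwall factor is finite by \eqref{eq:h3}.

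For \eqref{eq:gradient_flow_stability}, I would use the fact that, by Thm.~\ref{thm:kunita_flow_theorem}, $J_{s,t}(x):=\nabla Z_{s,t}(x)$ satisfies the linear matrix SDE
\begin{equation*}
    J_{s,t}(x) = I + \int_s^t \nabla a_r(Z_{s,r}(x))\,J_{s,r}(x)\,dr + \int_s^t \nabla\sigma_r(Z_{s,r}(x))\,J_{s,r}(x)\,dW_r
\end{equation*}
(componentwise). A preliminary BDG + Gr\"onwall argument on this linear equation gives the a priori moment bound $\sup_{x,s}\E[\sup_{t\in[s,T]}\|J_{s,t}(x)\|^p]\leq C$, with $C$ depending only on $p,T$ and the norms in \eqref{eq:h3}; by \eqref{eq:general_convergent_coeffs}, the same bound holds for $J^n:=\nabla Z^n$ uniformly in $n$. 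Then, with $\delta J_t := J^n_{s,t}(x) - J_{s,t}(x)$, I would subtract the two Jacobian SDEs and decompose
\begin{equation*}
    \nabla a^n_r(Z^n)J^n - \nabla a_r(Z)J = [\nabla a^n_r-\nabla a_r](Z^n)J^n + [\nabla a_r(Z^n)-\nabla a_r(Z)]J^n + \nabla a_r(Z)\delta J,
\end{equation*}
and analogously for $\nabla\sigma$; the three summands are bounded respectively by $\|\nabla a^n_r-\nabla a_r\|_{C^0_x}\|J^n\|$, $[\nabla a_r]_{C^{0,\alpha}_x}|\delta Z_r|^\alpha\|J^n\|$, and $\|\nabla a_r\|_{C^0_x}\|\delta J\|$. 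After BDG and Cauchy--Schwarz in probability (to peel off $\|J^n\|$ using the $n$-uniform moment bound) plus the pointwise stability already obtained (to control $\E[|\delta Z_r|^{\alpha p'}]$ for a suitable $p'$), Gr\"onwall closes the argument, yielding \eqref{eq:gradient_flow_stability}.

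The main obstacle is the mixed term $[\nabla a_r]_{C^{0,\alpha}_x}\,|\delta Z_r|^\alpha\,\|J^n\|$ appearing in the gradient step: it couples the convergence of $Z^n$ to $Z$ (from the first BDG + Gr\"onwall) with moments of $J^n$ (from the Jacobian a priori bound) through the H\"older seminorm of $\nabla a_r$. This H\"older regularity is available from \eqref{eq:h3} applied to the limit $a$ but, crucially, is not required for the approximants, which are only assumed convergent in the weaker norm $L^1_tC^1_x$. Handling this term while preserving uniformity in $x$ and $s$ is what ties the two BDG + Gr\"onwall arguments into a single stability result.
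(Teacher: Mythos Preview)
Your approach is essentially the paper's: the same two-stage BDG + Gr\"onwall argument (first for $\delta Z$, then for $\delta J$), the same three-term decomposition of $\nabla a^n_r(Z^n)J^n - \nabla a_r(Z)J$, and the same use of Cauchy--Schwarz in probability to separate the $n$-uniform Jacobian moment bound from the convergence of $\delta Z$. The one structural difference is that the paper begins with a deterministic time-change $t\mapsto\tau_t$, where $\tau$ is the inverse of $A_t=\int_0^t(\|a_r\|_{C^{1,\alpha}_x}+\|\sigma_r\|_{C^{1,\alpha}_x}^2+1)\,dr$, to reduce to coefficients in $L^\infty_tC^{1,\alpha}_x$; after this reduction the Gr\"onwall steps become standard (constant-coefficient) Gr\"onwall, and all $C^{1}_x$- and $C^{1,\alpha}_x$-norms pull out of the time integrals. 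Your direct route without the time-change is also valid, but it requires a weighted Gr\"onwall with kernel $\|\nabla a_r\|_{C^0_x}+\|\nabla\sigma_r\|_{C^0_x}^2$ and a Jensen/H\"older step to handle $p$-th powers of integrals when the coefficients are only $L^1_t$ (drift) and $L^2_t$ (diffusion) in time; this is routine but slightly heavier bookkeeping than the time-changed version.
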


\begin{proof}
    Before delving into estimates, we make some simplifications. First, note that we can take $s = 0$ without loss of generality: using that $Z_{s, t}^n$ and $Z_{s, t}$ are homeomorphims on $\R^d$ in addition to the flow property implies
    \begin{equation*}
        \sup_{x \in \R^d} \sup_{s \in (0, T)} \E \biggl[ \sup_{t \in (s, T)} |Z_{s, t}^n(x) - Z_{s, t}(x)|^p\biggr] = \sup_{x \in \R^d} \E \biggl[ \sup_{t \in (0, T)} |Z_{t}^n(x) - Z_{t}(x)|^p\biggr]
    \end{equation*}
    for all $n \in \N$. Furthermore, it is enough to prove the assertion for $p \geq 2$; if this is not the case, one can simply use H\"older's inequality to increase the exponent inside the expectation. Finally, we will assume that $a^n$ and $\sigma^n$ are coefficients in~$L^\infty_t C^{1, \alpha}_x$ which converges to $a$ and $\sigma$ in $L^\infty_t C^{1}_x$. In all of the subsequent integrals, the general case can be reduced to this case by a time-change. Let us demonstrate this idea for $(Z_t^x)_{t \in [0, T]}$, being a continuous stochastic process of the flow $Z_t(x)$ which satisfies \eqref{eq:general_system_forward} with $s=0$. Define 
    \begin{equation*}
        A_t \coloneqq \int_0^t \bigl(\norm{a_r}_{C^{1, \alpha}_x} + \norm{\sigma_r}_{C^{1, \alpha}_x}^2 + 1\bigr)\, dr, \qquad \tau_t \coloneqq \sup\{s \in [0, T]\setsep A_s \leq t\},
    \end{equation*}
    so that $\tau_t$ is the inverse of the strictly increasing function $A_t$. Then $\bar{Z}_t \coloneqq Z_{\tau_t}$ is also a $C^{1, \beta}$-\semimartingale with respect to the filtration $\bar{\F}_t \coloneqq \F_{\tau_t}$, and it satisfies
    \begin{equation*}
        \bar{Z}_t^x = x + \int_0^t \bar{a}_r(\bar{Z}_r^x)\, dr + \int_0^t \bar{\sigma}_r(\bar{Z}_r^x)\, dW_r
    \end{equation*}
    $\P$-a.s, where the coefficients are given by
    \begin{equation*}
        \bar{a}_t(x) = \frac{a_{\tau_t}(x)}{\norm{a_{\tau_t}}_{C^{1, \alpha}_x} + \norm{\sigma_{\tau_t}}_{C^{1, \alpha}_x}^2 + 1}, \qquad \bar{\sigma}_t(x) = \frac{\sigma_{\tau_t}(x)}{\norm{a_{\tau_t}}_{C^{1, \alpha}_x} + \norm{\sigma_{\tau_t}}_{C^{1, \alpha}_x}^2 + 1}.
    \end{equation*}
    These time-changed coefficients are bounded in $C^{1, \alpha}_x$ uniformly in time. (Note also that if \eqref{eq:general_convergent_coeffs} holds, then $\bar{a}^n \to \bar{a}$ and $\bar{\sigma}^n \to \bar{\sigma}$ in $L^\infty_t C^1_x$ as $n \to \infty$). For more details about the this method, see \mbox{\cite[Theorem~3.2.9]{kunita_1990}}. 
    
    For a clear organization, we split the proof into three claims.
    
    \begin{claim} \label{claim:stability_claim_1}
    There is a constant $C^{(1)} = C^{(1)}(d, p, T, \norm{a}_{L^\infty_t C^1_x}, \norm{\sigma}_{L^\infty_t C^1_x}) \geq 0$ such that
    \begin{equation*}
        \sup_{x \in \R^d} \E\biggl[\sup_{t \in (0, T)} |Z_{t}^n(x) - Z_{t}(x)|^p\biggr] \leq C^{(1)} \bigl(\norm{a^n - a}_{L^\infty_t C^0_x}^p + \norm{\sigma^n - \sigma}_{L^\infty_t C^0_x}^p\bigr).
    \end{equation*}
    for all $n \in \N$.    
    \end{claim}
    
    \begin{proof}
    The difference $Z_t^n(x) - Z_t(x)$ satisfies $\P$-a.s.
    \begin{equation*}
        Z_{t}^n(x) - Z_t(x) = \int_{0}^{t} a_r^n(Z_r^n(x)) - a_r(Z_r(x))\, dr + \int_{0}^{t} \sigma_r^n(Z_r^n(x)) - \sigma_r(Z_r(x))\, dW_r
    \end{equation*}
    Using Doob's maximal inequality and the Burkholder--Davies--Gundy (BDG) inequality, one can estimate, for any $t \in (0, T)$,
    \begin{equation*}
        \begin{aligned}
            & \E\biggl[\sup_{u \in (0, t)} |Z_{u}^n - Z_{u}|^p\biggr]\\
            & \lesssim_{p} \int_{0}^{t} \E\bigl[|a_r^n(Z^n_r) - a_r(Z_r)|^p\bigr]\, dr + \sum_{j = 1}^d \int_{0}^{t} \E\bigl[|\sigma_r^{n, j}(Z_r^n) - \sigma_r^{j}(Z_r)|^p\bigr]\, dr \\
            & \lesssim_{p, T, d} \norm{a^n - a}_{L^\infty_t C^0_x}^p + \norm{\sigma^n - \sigma}_{L^\infty_t C^0_x}^p \\
            & \qquad + \bigl(\norm{a}_{L^\infty_t C^1_x}^p + \norm{\sigma}_{L^\infty_t C^1_x}^p\bigr) \int_0^t \E\biggl[\sup_{u \in (0, r)}|Z_u^n(x) - Z_u(x)|^p\biggr]\, dr,
        \end{aligned}
    \end{equation*}
    where $\sigma^{n,j }$ denotes the $j$-th column vector of $\sigma^n$. Now \eqref{claim:stability_claim_1} follows upon application of Gronwall's inequality. 
    \end{proof}
    
    Since we have assumed that $a^n \to a$ and $\sigma^n \to \sigma$ in $L^\infty_t C^{1}_x$, this already proves \eqref{eq:flow_stability}. Next, we show that the derivatives of the flow are uniformly bounded in expectation.
    
    \begin{claim} \label{claim:stability_claim_2}
    There is a constant
    \begin{equation*}
        C^{(2)} = C^{(2)}\bigl(d, p, T, \sup_n \norm{a^n}_{L^\infty_t C^1_x}, \sup_n \norm{\sigma^n}_{L^\infty_t C^1_x}\bigr) \geq 0
    \end{equation*}
    such that
    \begin{equation} \label{eq:gradient_flow_boundedness}
        \sup_{x \in \R^d} \E\biggl[\sup_{t \in (0, T)}|\partial_i Z_t^n(x)|^p\biggr] \leq C^{(2)}
    \end{equation}
    for all $n \in \N$ and $i = 1, ..., d$. 
    \end{claim}

    \begin{proof}
    Since the forward stochastic flow $Z_{t}^n(x)$ is a $C^{1, \beta}$-\diffeomorphism, the $i$-th derivative $(\partial_i Z_t^n(x))_{t \in [0, T]}$ satisfies
    \begin{equation*}
        \partial_i Z_t^n = e_i + \int_{0}^t \nabla a_r^n(Z_r^n) \partial_i Z_r^n\, dr + \sum_{j = 1}^d \int_{0}^t \nabla \sigma_r^{n, j}(Z_r^n) \partial_i Z_r^n\, dW^j_r
    \end{equation*}
    $\P$-a.s (see \cite[Chap.~4.6]{kunita_1990}). We infer by Doob's inequality and the BDG inequality that
    \begin{equation*}
        \begin{aligned}
            & \E\biggl[\sup_{u \in (0, t)} |\partial_i Z_u^n(x)|^p \biggr] \\
            & \quad \lesssim_{d, p, T} 1 + \bigl(\norm{a^n}_{L^\infty_t C^{1}_x}^p + \norm{\sigma^n}_{L^\infty_t C^{1}_x}^p\bigr) \int_0^t \E\biggl[\sup_{u\in (0, r)}|\partial_i Z_u^n(x)|^p\biggr]\, dr.
        \end{aligned}
    \end{equation*}
    Gronwall's inequality now implies \eqref{eq:gradient_flow_boundedness}.
    \end{proof}
    
    Finally, me make the following claim, which would imply \eqref{eq:gradient_flow_stability} by convergence of the coefficients in $L^\infty_t C^1_x$.

    \begin{claim}
    There is a constant 
    \begin{equation*}
        C^{(3)} = C^{(3)}\bigl(\alpha, d, p, T, \sup_n \norm{a^n}_{L^\infty_t C^1_x}, \sup_n \norm{\sigma^n}_{L^\infty_t C^1_x}, \norm{a}_{L^\infty_t C^{1, \alpha}_x}, \norm{\sigma}_{L^\infty_t C^{1, \alpha}_x}\bigr) \geq 0    
    \end{equation*}
    such that
    \begin{equation} \label{eq:gradient_flow_stability_claim}
        \sup_{x \in \R^d} \E\biggl[\sup_{t \in (0, T)}|\partial_i Z_t^n(x) - \partial_i Z_t(x)|^p\biggr] \leq C^{(3)} \bigl(\norm{a^n - a}_{L^\infty_t C^1_x}^p + \norm{\sigma^n - \sigma}_{L^\infty_t C^1_x}^p\bigr)
    \end{equation}
    for all $n \in \N$ and $i = 1, ..., d$.    
    \end{claim}

    \begin{proof}
    Here, we will use that
    \begin{equation*}
        \begin{aligned}
            \partial_i Z_t^n(x) - \partial_i Z_t(x) & = \int_{0}^t \nabla a_r^n(Z_r^n(x)) \partial_i Z_r^n(x) - \nabla a(Z_r(x)) \partial_i Z_r(x)\, dr \\
            & \quad + \sum_{j = 1}^d \int_{0}^t \nabla \sigma^{n, j}(Z_r^n(x)) \partial_i Z_r^n(x) - \nabla \sigma^j(Z_r(x)) \partial_i Z_r(x)\, dW^j_r
        \end{aligned}
    \end{equation*}
    $\P$-a.s., and moreover
    \begin{equation*}
        \begin{aligned}
            & |\nabla a_t^n(Z_t^n) \partial_i Z_t^n - \nabla a(Z_t) \partial_i Z_t| \\
            & \quad \leq \norm{a_t^n - a_t}_{C^1_x} |\partial_i Z_t^n| + \norm{a_t}_{C^{1, \alpha}_x}|Z_t^n - Z_t|^{\alpha} |\partial_i Z_t^n| + \norm{a_t}_{C^1_x} |\partial_i Z_t^n - \partial_i Z_t|
        \end{aligned}
    \end{equation*}
    and similarly
    \begin{equation*}
        \begin{aligned}
            & |\nabla \sigma_t^{j, n}(Z_t^n) \partial_i Z_t^n - \nabla \sigma_t^{j, n}(Z_t) \partial_i Z_t| \\
            & \quad \leq \norm{\sigma_t^{n, j} - \sigma_t^j}_{C^1_x} |\partial_i Z_t^n| + \norm{\sigma_t^j}_{C^{1, \alpha}_x}|Z_t^n - Z_t|^{\alpha} |\partial_i Z_t^n| + \norm{\sigma_t^j}_{C^1_x} |\partial_i Z_t^n - \partial_i Z_t|
        \end{aligned}
    \end{equation*}
    for $j = 1, ..., d$. Using Doob's inequality and the BDG inequality, we obtain
    \begin{equation*}
        \begin{aligned}
            & \E\biggl[\sup_{u \in (0, t)} |\partial_i Z_u^n - \partial_i Z_u|^p\biggr] \lesssim_{p} \int_{0}^t \E\bigl[|\nabla a_r^n(Z_r^n) \partial_i Z_r^n - \nabla a(Z_r) \partial_i Z_r|^p\bigr]\, dr \\
            & \qquad + \sum_{j = 1}^d \int_{0}^t \E\bigl[|\nabla \sigma^{n, j}(Z_r^n) \partial_i Z_r^n - \nabla \sigma^j(Z_r) \partial_i Z_r|^p\bigr]\, dr \\
            & \quad \lesssim_{p} \biggl(\norm{a^n - a}_{L^\infty_t C^1_x}^p + \sum_{j = 1}^d \norm{\sigma^{n, j} - \sigma^j}_{L^\infty_t C^1_x}^p\biggr) \int_0^t \E\bigl[|\partial_i Z_r^n|^p\bigr]\, dr \\
            & \qquad + \biggl(\norm{a}_{L^\infty_t C^{1, \alpha}_x}^p + \sum_{j = 1}^d \norm{\sigma^j}_{L^\infty_t C^{1, \alpha}_x}^p\biggr) \int_0^t \E\bigl[|Z_t^n - Z_r|^{\alpha p} |\partial_i Z_r^n|^p \bigr]\, dr \\
            & \qquad + \biggl(\norm{a}_{L^\infty_t C^{1}_x}^p + \sum_{j = 1}^d \norm{\sigma^j}_{L^\infty_t C^{1}_x}^p\biggr) \int_0^t \E\biggl[\sup_{u \in (0, r)}|\partial_i Z_u^n - \partial_i Z_u|^p\biggr]\, dr.
        \end{aligned}
    \end{equation*}
    Using H\"older's inequality in the penultimate integral and then applying Claim \ref{claim:stability_claim_1} and \ref{claim:stability_claim_2} yields
    \begin{equation*}
        \begin{aligned}
            & \E\biggl[\sup_{u \in (0, t)} |\partial_i Z_u^n - \partial_i Z_u|^p\biggr] \lesssim_{d, p, T} C^{(2)}\bigl(\norm{a^n - a}_{L^\infty_t C^1_x}^p + \norm{\sigma^{n} - \sigma}_{L^\infty_t C^1_x}^p\bigr) \\
            & \qquad + C^{(1)}C^{(2)} \bigl(\norm{a}_{L^\infty_t C^{1, \alpha}_x}^p + \norm{\sigma}_{L^\infty_t C^{1, \alpha}_x}^p\bigr) \bigl(\norm{a^n - a}_{L^\infty_t C^0_x}^p + \norm{\sigma^n - \sigma}_{L^\infty_t C^0_x}^p\bigr) \\
            & \qquad + \bigl(\norm{a}_{L^\infty_t C^{1}_x}^p + \norm{\sigma}_{L^\infty_t C^{1}_x}^p\bigr) \int_0^t \E\biggl[\sup_{u \in (0, r)}|\partial_i Z_u^n - \partial_i Z_u|^p\biggr]\, dr.
        \end{aligned}
    \end{equation*}
    Application of Gronwall's inequality proves \eqref{eq:gradient_flow_stability_claim}.
    \end{proof}
    This completes the proof of Lemma \ref{lemma:forward_stability}.
\end{proof}

Next, we consider the backward model SDE
\begin{equation} \label{eq:general_system_backward}
    Z_s^{x, t} = x - \int_s^t a_r(Z_r^{x, t})\, dr - \int_s^t \sigma_r(Z_r^{x, t})\, \hat{d}W_r,
\end{equation}
for $x \in \R^d$ and $0 \leq s \leq t \leq T$. A solution should be viewed as a backward continuous stochastic process $(Z_{s}^{x, t})_{s \in [0, t]}$ adapted to $\F_{s, t}$. Since the assumption~\eqref{eq:h3} on the coefficients is the same here as for the forward equation, we have completely parallell results in the backward direction (again, see \cite{kunita_1990}).

In the following theorem, we denote the backward flow generated by \eqref{eq:general_system_backward} by $\hat{Z} = \hat{Z}_{s, t}(x)$. The reason for this is that the backward flow does not in general equal the inverse of the forward flow given in Theorem~\ref{thm:kunita_flow_theorem}. We will point this out whenever there is possible confusion in the succeeding sections.

\begin{theorem} \label{thm:kunita_flow_theorem_backward}
    Let coefficients $a$ and $\sigma$ be bounded, measurable and satisfy \eqref{eq:h3}.
    \begin{itemize}
        \item[\textit{(i)}] For any $x \in \R^d$ and $t \in [0, T]$, there exists a unique solution $(Z_s^{x, t})_{s \in [0, t]}$ of the backward equation \eqref{eq:general_system_backward}. Moreover, for any $\beta < \alpha$, the system of solutions $(Z_s^{x, t}\setsep 0 \leq s \leq t \leq T,\, x \in \R^d)$ has a modification, denoted by $\hat{Z}_{s, t}(x)$, which is a backward stochastic flow of $C^{1, \beta}$-\diffeomorphisms. For any $x \in \R^d$ and $s \in [0, T]$, the process $(\hat{Z}_{s, t}(x))_{s \in [0, t]}$ is a backward $C^{1, \beta}$-\semimartingale.
    
        \item[\textit{(ii)}] Let $(\hat{Z^{n}})_{n \in \N}$ and $\hat{Z}$ be backward stochastic flows of $C^{1, \beta}$-\diffeomorphisms corresponding to convergent sequences of coefficients as in \eqref{eq:general_convergent_coeffs}. Then for any $p > 0$, the sequence of flows $(\hat{Z}^{n})_n$ converges to $\hat{Z}$ in the sense that
        \begin{align*}
            \lim_{n \to \infty} \sup_{x \in \R^d} \sup_{t \in (0, T)} \E \biggl[ \sup_{s \in (0, t)} |\hat{Z}_{s, t}^{n}(x) - \hat{Z}_{s, t}(x)|^p\biggr] = 0, \\ %\label{eq:backward_flow_stability} \\
            \lim_{n \to \infty} \sup_{x \in \R^d} \sup_{t \in (0, T)} \E \biggl[\sup_{s \in (0, t)} \norm{\nabla \hat{Z}_{s, t}^{n}(x) - \nabla \hat{Z}_{s, t}(x)}^p\biggr] = 0. %\label{eq:backward_gradient_flow_stability}
        \end{align*}
    \end{itemize}
\end{theorem}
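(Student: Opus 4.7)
The plan is to deduce Theorem~\ref{thm:kunita_flow_theorem_backward} from the forward results (Theorem~\ref{thm:kunita_flow_theorem} and Lemma~\ref{lemma:forward_stability}) by a time-reversal argument. An equally valid route would be to run the arguments of Theorem~\ref{thm:kunita_flow_theorem} and Lemma~\ref{lemma:forward_stability} \emph{in situ} using the backward stochastic calculus of Kunita~\cite{kunita_1990}, which is completely parallel to the forward calculus (backward Doob, backward Burkholder--Davis--Gundy, backward Gronwall); but the time-reversal route is more economical since it avoids duplicating work.

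First I would set up the time reversal. Define the reversed Brownian motion $\tilde W_u \coloneqq W_T - W_{T-u}$ for $u \in [0, T]$, which is a Brownian motion with respect to the filtration $\tilde\F_u \coloneqq \F_{T-u, T}$. For the backward process $(Z_s^{x, t})_{s \in [0, t]}$, introduce $\tilde Z_u^{x, t} \coloneqq Z_{t-u}^{x, t}$ for $u \in [0, t]$. The key observation is that, by the very definition of the backward Itô integral with the right-endpoint Riemann-sum convention, one has
\begin{equation*}
    \int_s^t \sigma_r(Z_r^{x, t}) \,\hat d W_r \;=\; -\int_0^{t-s} \tilde\sigma_u(\tilde Z_u^{x, t}) \, d\tilde W_u,
\end{equation*}
where $\tilde a_u \coloneqq a_{t-u}$ and $\tilde\sigma_u \coloneqq \sigma_{t-u}$ inherit the regularity \eqref{eq:h3}. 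Consequently $\tilde Z$ solves a forward SDE of the type \eqref{eq:general_system_forward} on $[0,t]$ with coefficients $\tilde a,\tilde\sigma$ and driving Brownian motion $\tilde W$. Theorem~\ref{thm:kunita_flow_theorem} then provides existence, uniqueness, and a modification $\tilde Z_{u, u'}(x)$ that is a forward stochastic flow of $C^{1,\beta}$-\diffeomorphisms, whose dependence on $x$ yields a forward $C^{1,\beta}$-\semimartingale. Undoing the time reversal, namely $\hat Z_{s, t}(x) \coloneqq \tilde Z_{0, t-s}(x)$, produces the backward flow of part~\textit{(i)}, and the backward \semimartingale property follows since backward/forward semimartingales are exchanged by time reversal.

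For part~\textit{(ii)}, I would apply the same construction to the sequence $(\hat Z^n)_n$: the time change yields forward flows $(\tilde Z^n)_n$ associated to coefficients $\tilde a^n \coloneqq a^n_{T-\,\cdot}$ and $\tilde\sigma^n \coloneqq \sigma^n_{T-\,\cdot}$, which converge to $\tilde a, \tilde\sigma$ in $L^1_u C^1_x$ respectively $L^2_u C^1_x$ because the time change is a measure-preserving reparametrization of $[0, T]$. Lemma~\ref{lemma:forward_stability} applied to $(\tilde Z^n, \tilde Z)$ then gives the uniform convergence in $x$ and in the terminal time $t$ of $\E[\sup_{u}|\tilde Z^n_{0, u}(x) - \tilde Z_{0, u}(x)|^p]$ and of the analogous gradient quantity; transporting this back through $\sup_{u \in (0, t)} = \sup_{s \in (0, t)}$ yields the two claimed limits.

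The main technical obstacle, and really the only nonroutine point, is the precise verification of the identification between the backward Itô integral and the forward Itô integral under time reversal, with respect to the reversed filtration. This requires checking that the right-endpoint Riemann sums defining $\hat d W$ correspond exactly to the left-endpoint Riemann sums defining $d\tilde W$ after the change $r \mapsto t - u$, that the $\F_{s,t}$-adaptedness of the integrand becomes $\tilde\F_u$-adaptedness of the time-reversed integrand (automatic here as the coefficients are deterministic), and that the ``backward $C^{1,\beta}$-\semimartingale'' notion transfers to the forward one in the reversed filtration. All of this is standard and laid out in \cite{kunita_1990}, but it must be invoked carefully to justify quoting Theorem~\ref{thm:kunita_flow_theorem} and Lemma~\ref{lemma:forward_stability} as black boxes.
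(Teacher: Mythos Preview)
The paper does not give an explicit proof of this theorem: it simply states that ``we have completely parallel results in the backward direction'' and cites Kunita~\cite{kunita_1990}, implicitly relying on the backward stochastic calculus being a mirror image of the forward one. Your proposal is therefore more detailed than what the paper offers, and you correctly identify both viable routes (re-running the forward arguments with backward calculus, or time-reversing).

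There is, however, an inconsistency in your time-reversal setup that would need to be repaired. You reverse the Brownian motion globally, $\tilde W_u = W_T - W_{T-u}$, but you reverse the process with a $t$-dependent shift, $\tilde Z_u = Z_{t-u}$ and $\tilde a_u = a_{t-u}$. These do not match: the increments of $\tilde W$ correspond to times $T - u$, while the process and coefficients sit at times $t - u$. More importantly, the $t$-dependent reversal produces, for each terminal time $t$, a \emph{different} forward SDE (with coefficients $a_{t-\cdot}$), and the resulting forward flows for distinct values of $t$ are not a priori compatible; so the backward flow property $\hat Z_{s,r} \circ \hat Z_{r,t} = \hat Z_{s,t}$ does not follow from your definition $\hat Z_{s,t} = \tilde Z_{0,t-s}$. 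The clean fix --- which you in fact adopt in part~\textit{(ii)} when writing $\tilde a^n = a^n_{T-\cdot}$ --- is to use the global reversal $u = T - s$ throughout: then $\tilde a_u = a_{T-u}$, $\tilde\sigma_u = \sigma_{T-u}$ are independent of $t$, one obtains a single forward flow $\tilde Z$, and $\hat Z_{s,t}(x) \coloneqq \tilde Z_{T-t,\, T-s}(x)$ inherits the flow property directly from $\tilde Z$. Lemma~\ref{lemma:forward_stability}, which already treats arbitrary initial times, then transfers to give the uniform-in-$t$ convergence in part~\textit{(ii)}.
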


\subsection{Proof of Theorem \ref{thm:main_flow}} \label{sec:proof_main_flow}

We will now consolidate the results of the two previous sections into a proof of Theorem~\ref{thm:main_flow}. Let us first note that after applying the Zvonkin-type transformation to \eqref{eq:sde_integral_form}, the transformed equation \eqref{eq:zvonkin_process_forward_renamed} possesses all the desired properties:

\begin{lemma} \label{lemma:zvonkin_flow}
    For all $y \in \R^d$ and $s \in [0, T]$, there exists a unique solution $(Y_t^{y, s})_{t \in [s, T]}$ of \eqref{eq:zvonkin_process_forward_renamed} on $(s, T)$. For any $\beta < \alpha$, the system of such solutions has a modification $Y = Y_{s, t}(y)$ which is a forward stochastic flow of $C^{1, \beta}$-\diffeomorphisms on $\R^d$. Moreover, if $(b^n)_{n \in \N}$ is a sequence of coefficients for \eqref{eq:sde_integral_form} satisfying~\eqref{eq:convergent_coeff} for some $\alpha' > 0$, then the corresponding forward flows $Y^n$ converge to $Y$ in the sense of \mbox{\eqref{eq:forward_stability}--\eqref{eq:forward_gradient_stability}}.
\end{lemma}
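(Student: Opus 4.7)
The plan is to reduce the lemma to an application of Kunita's flow theorem (Thm.~\ref{thm:kunita_flow_theorem}) and the corresponding stability result (Lemma~\ref{lemma:forward_stability}) for the transformed equation \eqref{eq:zvonkin_process_forward_renamed}, using the PDE theory of Section~\ref{sec:pde} to propagate regularity from the drift $b$ to the Zvonkin correction $v$ and to its approximations.

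First, existence, uniqueness, and the flow structure. By Thm.~\ref{thm:pde_wellposedness}, each component of the system \eqref{eq:zvonkin_transform_componentwise} admits a unique solution $v^i \in \W^{1,q}_{2,\alpha}(T)$, and the limit \eqref{eq:model_sol_limit_lambda} allows me to fix $\lambda$ large enough that $\norm{v}_{L^\infty_t C^{1}_x}$ is small enough to make the Jacobian of $g_t = I + v_t$ strictly diagonally dominant uniformly in $(x,t)$. Hadamard's theorem then turns $g_t$ into a $C^{1,\alpha}$-\diffeomorphism of $\R^d$ for every $t$, so that by Cor.~\ref{corr:forward_coeff_regularity} the coefficients $\tilde{b}, \tilde{\sigma}$ of \eqref{eq:zvonkin_process_forward_renamed} satisfy hypothesis \eqref{eq:h3}. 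A direct application of Thm.~\ref{thm:kunita_flow_theorem} yields the unique strong solutions $Y_t^{y,s}$ together with the modification $Y = Y_{s,t}(y)$ as a forward stochastic flow of $C^{1,\beta}$-\diffeomorphisms for every $\beta < \alpha$.

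For the stability statement, let $v^n$ denote the PDE solution associated with drift $b^n$ and source $-b^n$, and set $g^n_t = I + v^n_t$. Thm.~\ref{thm:pde_wellposedness}, applied at Hölder exponent $\alpha'$, converts the convergence $b^n \to b$ in $L^q_t C^{0,\alpha'}_x$ into $v^n \to v$ in $\W^{1,q}_{2,\alpha'}(T)$, in particular in $L^\infty_t C^{1,\alpha'}_x$. Since the a priori constants in Prop.~\ref{prop:apriori_regularity} depend continuously on the input norms and $\norm{b^n}_{L^q_t C^{0,\alpha'}_x}$ is uniformly bounded, a single $\lambda$ independent of $n$ can be chosen so that each $g^n_t$ is a $C^{1,\alpha'}$-\diffeomorphism with a uniform quantitative modulus of invertibility. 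Decomposing
\begin{equation*}
  \tilde{b}^n - \tilde{b} = \lambda (v^n - v)\circ (g^n)^{-1} + \lambda\bigl(v\circ (g^n)^{-1} - v\circ g^{-1}\bigr),
\end{equation*}
and analogously for $\tilde{\sigma}^n - \tilde{\sigma}$, the first piece is controlled by $\norm{v^n - v}_{L^\infty_t C^1_x}$, while the second is controlled via the mean-value inequality by $\norm{\nabla v}_{L^\infty_t C^{0,\alpha'}_x}$ times $\norm{(g^n)^{-1} - g^{-1}}_{L^\infty_t C^0_x}$, with an analogous bound after differentiation. This gives $\tilde{b}^n \to \tilde{b}$ in $L^\infty_t C^1_x$ and $\tilde{\sigma}^n \to \tilde{\sigma}$ in $L^q_t C^1_x$, which embed into $L^1_t C^1_x$ and $L^2_t C^1_x$ respectively on $[0,T]$. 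Lemma~\ref{lemma:forward_stability} applied to the transformed system then produces the limits \eqref{eq:forward_stability}--\eqref{eq:forward_gradient_stability} for $Y^n \to Y$.

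The main technical hurdle is the last compositional step, since $(g^n)^{-1}$ and $g^{-1}$ must be compared uniformly on all of $\R^d$ without any compactness. The key observation is that $g^n$ and $g$ are identity plus a bounded perturbation with uniformly small $C^{1,\alpha'}$-norm, so their inverses inherit an analogous structure, and uniform diagonal dominance of $\nabla g^n$ yields a uniform bound on $(\nabla g^n)^{-1}$ together with $\nabla g^{-1}$. Combined with the uniform Hölder continuity of $\nabla v$, this lets me extract $(g^n)^{-1} \to g^{-1}$ in $L^\infty_t C^1_x$ from the PDE-level convergence $g^n \to g$ in $L^\infty_t C^{1,\alpha'}_x$, supplying the missing ingredient and closing the argument.
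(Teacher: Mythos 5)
Your proposal is correct and follows essentially the same route as the paper's proof: reduce to Kunita's flow theorem and Lemma~\ref{lemma:forward_stability} for the transformed equation, transfer $b^n \to b$ to $v^n \to v$ in $\W^{1,q}_{2,\alpha'}(T)$ via Thm.~\ref{thm:pde_wellposedness}, fix a single $\lambda$ making all $g^n$ uniformly invertible, and propagate this to $(g^n)^{-1}\to g^{-1}$ in $L^\infty_t C^{1}_x$ (the paper does this via a mean-value-theorem identity for $g^n$ at the two inverse points plus the inverse-function-theorem formula for $\nabla(g^n)^{-1}$, which is exactly the mechanism your ``uniform bound on $(\nabla g^n)^{-1}$ plus Hölder continuity of $\nabla v$'' observation encodes). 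Your explicit decomposition of $\tilde{b}^n - \tilde{b}$ into a ``new numerator'' and ``new argument'' piece is a harmless unpacking of a step the paper leaves implicit.
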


\begin{proof}
    The first part of the claim is a direct consequence of Theorem~\ref{thm:kunita_flow_theorem}. Let $Y^n$ be the forward stochastic flows of $C^{1, \beta}$-\diffeomorphisms generated by~\eqref{eq:zvonkin_process_forward_renamed} with coefficients
    \begin{equation*}
        \tilde{b}_t^n \coloneqq \lambda v_t^n \circ (g_t^n)^{-1}, \qquad \tilde{\sigma}_t^n \coloneqq I+\nabla v_t^n \circ (g_t^n)^{-1},
    \end{equation*}
    where $v^n$ are the unique solutions of \eqref{eq:zvonkin_transform_componentwise} in $\W^{1, q}_{2, \alpha}(T)$ corresponding to $b^n$, and ${g_t^n(x) \coloneqq x + v_t^n(x)}$. The parameter $\lambda$ is fixed independently of $n$, so that all $g^n$ are invertible on $\R^d$. To prove that $Y^n$ converges to $Y$ in the sense of \mbox{\eqref{eq:forward_stability}--\eqref{eq:forward_gradient_stability}}, we will show that
    \begin{equation} \label{eq:convergence_transformed_coeffs}
        \lim_{n \to \infty} \tilde{b}^n = \tilde{b} \quad \text{in}\quad  L^\infty_t C^{1, \alpha'}_x, \qquad \lim_{n \to \infty} \tilde{\sigma}^n = \tilde{\sigma}\quad \text{in}\quad L^q_t C^{1, \alpha'}_x
    \end{equation}
    where $\tilde{b}$ and $\tilde{\sigma}$ are given by \eqref{eq:forward_tranformed_coeffs}, and then infer convergence in view of Lemma \ref{lemma:forward_stability}.
    
    Assume without loss of generality that $\alpha' \leq \alpha$ (since $C^{\alpha'}_x \subset C^\alpha_x$ if $\alpha' > \alpha$). By Theorem~\ref{thm:pde_wellposedness}, the convergence of $b^n$ to $b$ in $L^q_t C^{0, \alpha'}_x$ implies that $v^n$ converges to $v$ in~$W^{1, q}_{2, \alpha'}(T)$. Furthermore, note that for a.e.~$t \in [0, T]$, for any $n \in \N$ and $x \in \R^d$, there exists $\xi \in \R^d$ by the mean value theorem such that
    \begin{equation*}
        \bigl(g_t^n\circ (g_t^n)^{-1}\bigr)(x) - \bigl(g_t^n\circ g_t^{-1}\bigr)(x) = \nabla g_t^n(\xi) \bigl((g_t^n)^{-1}(x) - g_t^{-1}(x)\bigr).
    \end{equation*}
    Since $\nabla g_t^n$ is invertible on $\R^d$, we obtain the estimate
    \begin{equation*}
        \begin{aligned}
            \bigl|(g_t^n)^{-1}(x) - g_t^{-1}(x)\bigr| & = \bigl|(\nabla g_t^n(\xi))^{-1} \bigl(x - \bigl(g_t^n\circ g_t^{-1}\bigr)(x)\bigr)\bigr| \\
            & \leq \norm{(\nabla g_t^n)^{-1}}_{C^0_x} \bigl|\bigl(g_t\circ g_t^{-1}\bigr)(x) - \bigl(g_t^n \circ g_t^{-1}\bigr)(x)\bigr| \\
            & = \norm{(\nabla g_t^n)^{-1}}_{C^0_x} \bigl|\bigl(v_t \circ g_t^{-1}\bigr)(x) - \bigl(v_t^n \circ g_t^{-1}\bigr)(x)\bigr|,
        \end{aligned}
    \end{equation*}
    from which convergence of $(g^n)^{-1}$ to $g^{-1}$ in $L^\infty_t C^{0, \alpha'}_x$ follows (note that $g_t^{-1}$ and $(g_t^n)^{-1}$ do not belong to $L^\infty_t C^{0, \alpha'}_x$ since they are unbounded, but the unbounded parts cancel when taking the difference). Convergence of $(g^n)^{-1}$ to $g^{-1}$ in $L^\infty_t C^{1, \alpha'}_x$ can now be obtained via the formula
    \begin{equation*}
        \bigl(\nabla (g_t^n)^{-1}\bigr)(x) = \bigl[(\nabla g_t^n \circ (g_t^n)^{-1})(x)\bigr]^{-1}
    \end{equation*}
    which is due to the inverse function theorem. Combined with the convergence $v^n \to v$ in $\W^{1, q}_{2, \alpha'}(T)$, this proves \eqref{eq:convergence_transformed_coeffs} and thereby the claim.
\end{proof}

\begin{proof}[Proof of Theorem~\ref{thm:main_flow}]
    We begin with the proof of part (i), for     which we will have to show that the properties from Lemma \ref{lemma:zvonkin_flow} also hold for the original equation \eqref{eq:sde_integral_form}. First, if for any $x \in \R^d$ and $s \in [0, T]$ there are two distinct solutions $(X_t^{x, s})_{t \in [s, T]}$ and $(\bar{X}_t^{x, s})_{t \in [s, T]}$ of \eqref{eq:sde_integral_form}, then by the calculations in Sec.~\ref{sec:zvonkin}, both $g_t(X_t^{x, s})$ and $g_t(\bar{X}_t^{x, s})$ satisfy \eqref{eq:zvonkin_process_forward_renamed} with initial condition $y = g_s^{-1}(x)$. This is a contradiction, since pathwise uniqueness holds for \eqref{eq:zvonkin_process_forward_renamed}, and we conclude that pathwise uniqueness also holds for \eqref{eq:sde_integral_form}. Combined with the existence of weak solutions which can be obtained by Girsanov's theorem, this gives existence of strong solutions for all $x \in \R^d$ and $s \in [0, T]$ by the Yamada--Watanabe theorem (see e.g.~\cite{karatzas_1996}).
    
    Let $X = X_{s, t}(x)$ be defined as
    \begin{equation*}
        X_{s, t}(x) \coloneqq \bigl(g_t^{-1} \circ Y_{s, t} \circ g_s\bigr)(x).
    \end{equation*}
    Then for all $x \in \R^d$ and $s \in [0, T]$, the continuous stochastic process $(X_{s, t}(x))_{t \in [s, T]}$ coincides with the solution $(X_t^{x, s})_{t \in [s, T]}$ of \eqref{eq:sde_integral_form} $\P$-a.s. Moreover, $X$ is a forward stochastic flow of $C^{1, \beta}$-\diffeomorphisms. Indeed, we have \mbox{$\P$-a.s.}~that
    \begin{equation*}
        \bigl(X_{r, t} \circ X_{s, r}\bigr)(x) = \bigl(g_t^{-1} \circ Y_{r, t} \circ g_r \circ g_r^{-1} \circ Y_{s, r} \circ g_s\bigr)(x) = X_{s, t}(x)
    \end{equation*}
    for all $x \in \R^d$ and $0 \leq s \leq r \leq t \leq T$, and furthermore 
    \begin{equation*}
        X_{s, s}(x) = g_s^{-1}(Y_{s, s}(g_s(x))) = x.
    \end{equation*}
    Also, $x \mapsto X_{s, t}(x)$ inherits the \diffeomorphism property and the regularity of $Y_{s, t}$, $g_s$ and $g_t^{-1}$. This shows that $X$ has the properties of a forward stochastic flow of $C^{1, \beta}$-\diffeomorphisms, and it is generated by \eqref{eq:sde_integral_form}.

    Next, let $(b^n)_{n \in \N}$ be a sequence of coefficients for \eqref{eq:sde_integral_form} converging to $b$ according to~\eqref{eq:convergent_coeff}, and let $X_{s, t}^n(x) \coloneqq \bigl((g_t^n)^{-1} \circ Y_{s, t}^n \circ g_s^n\bigr)(x)$. Then by the above discussion, $X^n$ is for all $n \in \N$ a stochastic flow of $C^{1, \beta}$-diffeomorphisms. To prove the convergences \mbox{\eqref{eq:forward_stability}--\eqref{eq:forward_gradient_stability}}, we first claim that
    \begin{equation} \label{eq:convergence_transfer}
        \begin{aligned}
            & \sup_{x \in \R^d} \sup_{s \in (0, T)} \E\biggl[\sup_{t \in (s, T)} \bigl|X_{s, t}^n(x) - X_{s, t}(x)\bigr|^p\biggr] \\
            & \quad \lesssim_p \norm{({g^n})^{-1} - g^{-1}}_{L^\infty_t C^0_x}^p + \norm{\nabla g^{-1}}_{L^\infty_t C^0_x}^p \sup_{x \in \R^d} \E\biggl[\sup_{t \in (0, T)} \bigl|Y_{t}^n(x) - Y_{t}(x)\bigr|^p\biggr]
        \end{aligned}
    \end{equation}
    for all $n \in \N$. Indeed, we have
    \begin{equation*}
        \begin{aligned}
            & \sup_{x \in \R^d} \sup_{s \in (0, T)} \E\biggl[\sup_{t \in (s, T)} \bigl|X_{s, t}^n(x) - X_{s, t}(x)\bigr|^p\biggr] = \sup_{x \in \R^d} \E\biggl[\sup_{t \in (0, T)} \bigl|X_{t}^n(x) - X_{t}(x)\bigr|^p\biggr] \\
            & \quad = \sup_{x \in \R^d} \E\biggl[\sup_{t \in (0, T)} \bigl|(g_t^{n})^{-1} \bigl(Y_{t}^n(x)\bigr) - (g_t)^{-1}\bigl(Y_{t}(x)\bigr) \bigr|^p\biggr],
        \end{aligned}
    \end{equation*}
    and \eqref{eq:convergence_transfer} follows by inserting
    \begin{equation*}
        \bigl|(g_t^n)^{-1}\bigl(Y_{t}^n(x)\bigr) - g_t^{-1}\bigl(Y_{t}(x)\bigr)\bigr|^p \lesssim_p \norm{({g_t^n})^{-1} - g_t^{-1}}_{C^0_x}^p + \norm{\nabla g_t^{-1}}_{C^0_x}^p |Y_{t}^n(x) - Y_{t}(x)|^p.
    \end{equation*}
    Since we know that $(g^n)^{-1} \to g^{-1}$ in $L^\infty_t C^{1, \alpha'}_x$ from the proof of Lemma \ref{lemma:zvonkin_flow}, this proves \eqref{eq:forward_stability}. Next, we claim that
    \begin{equation} \label{eq:convergence_transfer_derivative}
        \begin{aligned}
            & \sup_{x \in \R^d} \sup_{s \in (0, T)}\E\biggl[\sup_{t \in (s, T)} \bigl|\partial_i X_{s, t}^n(x) - \partial_i X_{s, t}(x)\bigr|^p\biggr] \\
            & \lesssim_{p} \norm{\nabla(g^n)^{-1} - \nabla g^{-1}}_{L^\infty_t C^0_x}^p \sup_{x \in \R^d} \E\biggl[\sup_{t \in (0, T)}\bigl|\partial_i Y_t^n(x)\bigr|^p\biggr] \\
            & + \norm{\nabla g^{-1}}_{L^\infty_t C^{0, \alpha'}_x}^p \sup_{x \in \R^d} \E \biggl[\sup_{t \in (0, T)}\bigl|Y_t^n(x) - Y_t(x)\bigr|^{2 \alpha' p}\biggr]^\frac{1}{2} \sup_{x \in \R^d} \E\biggl[\sup_{t \in (0, T)} \bigl|\partial_i Y_t^n(x)\bigr|^{2p}\biggr]^\frac{1}{2} \\
            & + \norm{\nabla g^{-1}}_{L^\infty_t C^0_x}^p \sup_{x \in \R^d} \E\biggl[\sup_{t \in (0, T)}\bigl|\partial_i Y_t^n(x) - \partial_i Y_t(x)\bigr|^p\biggr]
        \end{aligned}
    \end{equation}
    for all $n \in \N$ and $i = 1, ..., d$. To see this, observe that for the $i$-th partial derivative, we have
    \begin{equation*}
        \begin{aligned}
            & \sup_{x \in \R^d} \sup_{s \in (0, T)} \E\biggl[\sup_{t \in (s, T)} \bigl|\partial_i X_{s, t}^n(x) - \partial_i X_{s, t}(x)\bigr|^p\biggr] \\
            & \quad = \sup_{x \in \R^d} \E\biggl[\sup_{t \in (0, T)} \bigl|\partial_i X_{t}^n(x) - \partial_i X_{t}(x)\bigr|^p\biggr] \\
            & \quad = \sup_{x \in \R^d} \E\biggl[\sup_{t \in (0, T)} \bigl|\partial_i \bigl((g_t^n)^{-1} \circ Y_{t}^n\bigr)(x) - \partial_i \bigl((g_t)^{-1} \circ Y_{t}\bigr)(x)\bigr|^p\biggr].
        \end{aligned}
    \end{equation*}
    Using the estimate
    \begin{equation*}
        \begin{aligned}
            & \bigl|\partial_i \bigl((g_t^n)^{-1} \circ Y_{t}^n\bigr)(x) - \partial_i \bigl((g_t)^{-1} \circ Y_{t}\bigr)(x)\bigr|^p & \\
            & \lesssim_{p} \norm{\nabla(g_t^n)^{-1} - \nabla g_t^{-1}}_{C^0_x}^p |\partial_i Y_t^n(x)|^p  + \norm{\nabla g_t^{-1}}_{C^{0, \alpha'}_x}^p |Y_t^n(x) - Y_t(x)|^{\alpha' p} |\partial_i Y_t^n(x)|^p \\
            & \quad + \norm{\nabla g_t^{-1}}_{C^0_x}^p |\partial_i Y_t^n(x) - \partial_i Y_t(x)|^p
        \end{aligned}
    \end{equation*}
    and H\"older's inequality gives \eqref{eq:convergence_transfer_derivative}. This proves \eqref{eq:forward_gradient_stability} by virtue of \eqref{eq:flow_stability}, \eqref{eq:gradient_flow_stability} and \eqref{eq:gradient_flow_boundedness}, and concludes part (i) of the proof.

    For the proof of (ii) for the backward equation \eqref{eq:backward_sde}, we define
    \begin{equation*}
        \hat{X}_{s, t}(x) \coloneqq \bigl(\hat{g}_s^{-1} \circ \hat{Y}_{s, t} \circ \hat{g}_t\bigr)(x),
    \end{equation*}
    where $\hat{Y}$ is the backward stochastic flow of $C^{1, \beta}$-\diffeomorphisms generated by the solutions $\hat{Y}_s^{x, s}$ of \eqref{eq:zvonkin_process_backward_renamed}, provided by Theorem~\ref{thm:kunita_flow_theorem_backward}. It is evident that a lemma analogous to Lemma \ref{lemma:zvonkin_flow} holds. One can now show, in the same way as above, that $\hat{X}$ is indeed a backward stochastic flow of $C^{1, \beta}$-\diffeomorphisms generated by~\eqref{eq:backward_sde}. 
    
    Let us finally prove that the inverse of the forward flow, $X^{-1}$, coincides with the backward flow $\hat{X}$. Since $(X_{s, t}(x))_{t \in [s, T]}$ satisfies \eqref{eq:sde}, we have
    \begin{equation*}
        X_{s, t}(X_{s, t}^{-1}(x)) = X_{s, t}^{-1}(x) + \int_s^t b_r(X_{s, r}(X_{s, t}^{-1}(x)))\, dr + \int_s^t dW_r.
    \end{equation*}
    But $X^{-1}$ is the inverse of the forward flow $X$, and thus $X_{s, r}(X_{s, t}^{-1}(x)) = X_{r, t}^{-1}(x)$. This means that
    \begin{equation*}
            X_{s, t}^{-1}(x) = x - \int_s^t b_r(X_{r, t}^{-1}(x))\, dr - \int_s^t dW_r = x - \int_s^t b_r(X_{r, t}^{-1}(x))\, dr - \int_s^t \hat{d}W_r
    \end{equation*}
    holds $\P$-a.s., which is exactly the equation uniquely satisfied by paths of the backward flow $(\hat{X}_{s, t}(x))_{s \in [0, t]}$. We conclude that $\hat{X} = X^{-1}$ $\P$-a.s.
\end{proof}

\begin{remark}
    There is no technical need to take the H\"older exponent $\alpha'$ different from $\alpha$ in \eqref{eq:convergent_coeff}; we have only included this option to adopt the weakest assumptions necessary to ensure the validity of the proof.
\end{remark}

\begin{remark}
    The processes $(X_{s, t}(x))_{t \in [s, T]}$ of the forward flow $X_{s, t}(x)$ are clearly $C^{0, \alpha}$-\semimartingales (which can be seen directly from \eqref{eq:sde_integral_form}), but note that they are not necessarily $C^{1, \beta}$-\semimartingales, even though the flow is a $C^{1, \beta}$-\diffeomorphism.
\end{remark}

\begin{corollary} \label{cor:flow_uniform_convergence}
    Assume that $b$ satisfies \eqref{eq:h1}. Let $(b^n)_{n \in \N}$ be a convergent sequence of coefficients according to \eqref{eq:convergent_coeff} with corresponding stochastic flows $X^n$ and $X$. Then $\P$-a.s.~for all $(s, t) \in [0, T]^2$, there are subsequences of $X_{s, t}^n$ and $(X_{s, t}^n)^{-1}$ which converge to $X_{s, t}$ and $X_{s, t}^{-1}$ locally uniformly on $\R^d$. Moreover, $\P$-a.s.~for all $(s, t) \in [0, T]^2$ there are subsequences of $\nabla X_{s, t}^n$ and $\nabla (X_{s, t}^n)^{-1}$ which converge to $\nabla X_{s, t}$ and $\nabla X_{s, t}^{-1}$ locally uniformly on $\R^{d\times d}$.
\end{corollary}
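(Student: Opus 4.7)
The plan is to upgrade the $L^p$-convergence statements of Theorem~\ref{thm:main_flow} to $\mathbb{P}$-a.s.\ locally uniform convergence along a subsequence, by combining a diagonal extraction at a countable dense set of parameters with an Arzelà--Ascoli argument driven by uniform-in-$n$ gradient bounds. Since part~(ii) of Theorem~\ref{thm:main_flow} gives the fully analogous estimates for the backward flow $(X^n)^{-1}$, it suffices to describe the argument for $X^n$; the corresponding subsequences for $(X^n)^{-1}$ and $\nabla (X^n)^{-1}$ are produced verbatim.

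\textbf{Step 1 (a.s.\ convergence at a dense set).} Fix countable dense sets $D \subset \R^d$ and $S \subset \{(s,t) \in [0, T]^2 \setsep s \leq t\}$. For each $(x, s, t) \in D \times S$, the stability bounds \eqref{eq:forward_stability}--\eqref{eq:forward_gradient_stability} imply $L^p(\Omega)$-convergence of $X^n_{s, t}(x)$ to $X_{s, t}(x)$ and of $\nabla X^n_{s, t}(x)$ to $\nabla X_{s, t}(x)$. A standard diagonal extraction over $D \times S$ yields a subsequence $(n_k)$ and a $\P$-null set $N_1$ off which pointwise convergence of both $X^{n_k}_{s, t}(x, \omega)$ and $\nabla X^{n_k}_{s, t}(x, \omega)$ holds for every $(x, s, t) \in D \times S$.

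\textbf{Step 2 (uniform-in-$n$ equicontinuity in $x$).} Composing Claim~\ref{claim:stability_claim_2} for the transformed flows $Y^n$ with the uniform $C^{1, \alpha'}$-bounds on $g^n$ and $(g^n)^{-1}$ established in the proof of Lemma~\ref{lemma:zvonkin_flow} (which rest on the continuity of the a priori constant $C_f$ in Proposition~\ref{prop:apriori_regularity}), one obtains
\[
\sup_n \sup_{x \in \R^d} \sup_{(s, t) \in [0, T]^2} \E\bigl[\norm{\nabla X^n_{s, t}(x)}^p\bigr] \leq C
\]
for every $p \in [2, \infty)$. Combining this with the $C^{1, \beta}$-\diffeomorphism property—again uniformly in $n$, by the same argument applied to the Hölder seminorms—a Kolmogorov continuity criterion applied jointly in $(x, s, t)$ and a Borel--Cantelli step along a further subsequence produce a $\P$-null set $N_2$ such that, for all $\omega \notin N_2$ and every compact $K \subset \R^d$,
\[
\sup_k \sup_{(s, t) \in [0, T]^2} \bignorm{X^{n_k}_{s, t}(\cdot, \omega)}_{C^{1, \beta'}(K)} < \infty
\]
for every $\beta' < \beta$.

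\textbf{Step 3 (Arzelà--Ascoli).} For $\omega \notin N_1 \cup N_2$ and $(s, t) \in [0, T]^2$, the Step~2 bound renders $\{X^{n_k}_{s, t}(\cdot, \omega)\}_k$ and $\{\nabla X^{n_k}_{s, t}(\cdot, \omega)\}_k$ equicontinuous on every compact set and thus precompact in $C_{\mathrm{loc}}(\R^d)$. Approximating $(s, t)$ by elements of $S$ and $x$ by elements of $D$ and using joint continuity of $X^{n_k}$ and $X$ in $(s, t, x)$, the pointwise limits from Step~1 uniquely identify every cluster point with $X_{s, t}(\cdot, \omega)$ (respectively $\nabla X_{s, t}(\cdot, \omega)$). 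Hence every subsequence of $X^{n_k}_{s, t}$ has a further subsequence converging locally uniformly to $X_{s, t}$, and likewise for the gradients.

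\textbf{Main obstacle.} The delicate step is Step~2: producing uniform-in-$n$ a.s.\ control of the $C^{1, \beta'}(K)$-norms of $X^{n_k}_{s, t}$. Pointwise moment bounds on $\nabla X^n$ are not enough by themselves; one needs uniform Hölder-in-$x$ moment bounds, which is where one pays back the investment in the continuity of the a priori constants in Proposition~\ref{prop:apriori_regularity}, and where a Kolmogorov-type argument (possibly in Garsia--Rodemich--Rumsey form to cover the joint $(s, t, x)$ dependence) is required to transfer these moment bounds to a.s.\ supremum bounds uniformly along the subsequence.
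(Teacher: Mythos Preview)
Your approach is defensible, but it is considerably more elaborate than the paper's, and the extra machinery does not buy you anything. The paper bypasses the countable-dense-set and diagonal-extraction business entirely: for a fixed compact $K$ and fixed $(s,t)$, Fubini gives
\[
\E\biggl[\int_K |X^n_{s,t}(x)-X_{s,t}(x)|^p\,dx\biggr]
=\int_K \E\bigl[|X^n_{s,t}(x)-X_{s,t}(x)|^p\bigr]\,dx \to 0,
\]
so along a subsequence $X^{n_k}_{s,t}\to X_{s,t}$ in $L^p(K)$ $\P$-a.s., and along a further subsequence pointwise a.e.\ on $K$. The paper then invokes the uniform $C^{1,\beta}$-regularity of the $X^n_{s,t}$ to upgrade a.e.\ convergence to locally uniform convergence. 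The same scheme is run for $\nabla X^n$ and for the backward flows. This is shorter than your Step~1 and lands at exactly the place you call the ``main obstacle'' without any Kolmogorov or Borel--Cantelli detour.

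That said, you have put your finger on a genuine soft spot: the uniform-in-$n$ $C^{1,\beta}$ bound that both arguments need at the end is simply asserted in the paper, whereas you at least try to justify it. Your sketch of Step~2, however, is not quite closed. Uniform $L^p(\Omega)$-bounds on $\|X^{n}_{s,t}\|_{C^{1,\beta'}(K)}$ do not by themselves yield $\sup_k\|X^{n_k}_{s,t}\|_{C^{1,\beta'}(K)}<\infty$ $\P$-a.s.\ via Borel--Cantelli; Chebyshev only gives $\P(Z_{n_k}>M_k)\lesssim M_k^{-p}$, and choosing $M_k\to\infty$ summably still allows $Z_{n_k}\to\infty$. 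A cleaner way to close the gap (and one that avoids a.s.\ uniform Hölder control altogether) is to run the paper's Fubini argument simultaneously for $X^n$ and $\nabla X^n$, obtaining $W^{1,p}(K)$-convergence $\P$-a.s.\ along a subsequence for $p>d$, and then invoke Sobolev embedding to get $C^0(K)$-convergence of $X^{n_k}$; for the gradient part one still needs the uniform $C^{0,\beta}$-control of $\nabla X^n$, which can be extracted from the uniform moment bounds on the transformed flows $Y^n$ (Claim~2 in Lemma~\ref{lemma:forward_stability}) together with a quantitative Kolmogorov estimate whose random constant has moments bounded uniformly in $n$---but you should make explicit how you pass from that to an a.s.\ statement (e.g.\ by absorbing the supremum over $k$ into a single Kolmogorov estimate for the difference process, rather than for each $X^{n_k}$ separately).
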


\begin{proof}
    We show convergence only for $X^n$; the other convergences follow by analogous arguments. Using Fubini's theorem, we have
    \begin{equation*}
        \begin{aligned}
            \E\biggl[\int_{K} |X_{s, t}^n(x) - X_{s, t}(x)|^p\, dx \biggr]& \leq \E\biggl[\int_{K} \sup_{t \in (s, T)} |X_{s, t}^n(x) - X_{s, t}(x)|^p\, dx \biggr] \\
            & = 
            \int_K \E\biggl[\sup_{t \in (s, T)} |X_{s, t}^n(x) - X_{s, t}(x)|^p \biggr]\, dx
        \end{aligned}
    \end{equation*}
    for any compact set $K \subset \R^d$. Since the right-hand side converges by Theorem~\ref{thm:main_flow}, we get $\P$-a.s.~convergence in a subsequence of $X_{s, t}^n$ to $X_{s, t}$ in $L^p_{\loc}(\R^d)$. Taking a further subsequence, the integrand $|X_{s, t}^n(x) - X_{s, t}(x)|^p$ converges $\P$-a.s.~for a.e.~$x \in K$. But since all $X_{s, t}^n$ are uniformly $C^{1, \beta}$-regular, this yields convergence locally uniformly on $\R^d$.
\end{proof}

\section{Application to linear SPDEs} \label{sec:main_proofs}

This section contains proofs of Theorem~\ref{thm:main_transport} and Theorem~\ref{thm:main_continuity}. We begin by explaining the relevant solution concepts for the STE \eqref{eq:stoch_transport_strong} and the SCE \eqref{eq:stoch_continuity_strong}, and the duality relation between them.

$\BV_{\loc}$-solutions of the STE will be defined as follows.

\begin{definition} \label{def:stoch_transport_weak}
    Let $u_\initial \in \BV_{\loc}(\R^d)$. A $\BV_{\loc}$-solution of \eqref{eq:stoch_transport_strong} is a random field $u\in L^\infty(\R^d \times (0, T)\times \Omega)$ such that
    \begin{enumerate}[label=(\roman*)]
        \item $u_t(\cdot, \omega) \in \BV_\loc(\R^d)$ for a.e.~$(t, \omega) \in [0, T] \times \Omega$, i.e.
        \begin{equation} \label{eq:bounded_bv}
        \int_{\R^d} \vartheta(x)\, |\nabla u_t|(dx) < \infty
    \end{equation}
    for all $\vartheta \in C_c^\infty(\R^d)$, for a.e.~$t\in [0, T]$ $\Pas$.
    \item For all $\psi \in L^q((0, T); C_c(\R^d; \R^d))$, the process $t \mapsto \int_{\R^d} \psi(x, t) \cdot \nabla u_t(dx)$ is progressively measurable.
    \item The process $t \mapsto \int_{\R^d} \vartheta u_t\, dx \in L^\infty((0, T) \times \Omega)$ has a representative which is a continuous $\F_t$-\semimartingale and satisfies
    \begin{equation} \label{eq:stoch_transport_weak}
        \begin{aligned}
            \int_{\R^d} u_t \vartheta\, dx & = \int_{\R^d} u_\initial \vartheta\, dx - \int_0^t \int_{\R^d} \vartheta b_r \cdot \nabla u_r(dx) dr \\
            & \quad - \int_0^t \biggl(\int_{\R^d} \vartheta \nabla u_r(dx)\biggr) \circ dW_r
        \end{aligned}
    \end{equation}
    for all $t \in [0, T]$ and $\vartheta \in C_c^\infty(\R^d)$, $\Pas$.
    \end{enumerate}
\end{definition}

In \eqref{eq:stoch_transport_weak}, the derivative $\nabla u$ denotes the $d$-dimensional measure with components $\partial_i u \in \radon_{\loc}(\R^d)$ for $i = 1, ..., d$. In view of the regularity of $b$ given by \eqref{eq:h1}, there is no problem in writing down the product $b \cdot \nabla u$. Moreover, since
\begin{equation*}
    \int_{\R^d} \vartheta \partial_i u_t(dx) = - \int_{\R^d} \partial_i \vartheta u_t\, dx
\end{equation*}
and the right-hand side is a continuous $\F_t$-\semimartingale, the stochastic integral is well-defined. It can be written as an Itô integral by observing that
\begin{equation*}
    \int_0^t \biggl(\int_{\R^d} \vartheta \nabla u_r(dx)\biggr) \circ dW_r = \int_0^t \biggl(\int_{\R^d} \vartheta \nabla u_r(dx)\biggr) dW_r - \frac{1}{2} \int_0^t \int_{\R^d} \nabla \vartheta \cdot \nabla u_r(dx) dr,
\end{equation*}
which can bee seen by using $\partial_i \vartheta$, $i = 1, ..., d$, as test functions in \eqref{eq:stoch_transport_weak}.

Next, we define weak solutions for the SCE.

\begin{definition} \label{def:stoch_continuity_weak}
    Let $\mu_\initial \in \radon_{\loc}(\R^d)$. A weak solution of \eqref{eq:stoch_continuity_strong} is a random measure $\mu\colon \Omega \times [0, T] \times \B(\R^d) \to \R$ such that
    \begin{enumerate}[label=(\roman*)]
        \item $\mu_t(\omega, \cdot) \in \meas_\loc(\R^d)$ a.e.~$(t, \omega) \in [0, T] \times \Omega$, i.e.
        \begin{equation*} \label{eq:bounded_measure}
            \int_{\R^d} \vartheta(x)\, |\mu_t|(dx) < \infty,
        \end{equation*}
        for all $\vartheta \in C_c^\infty(\R^d)$, for a.e.~$t\in [0, T]$ $\Pas$.
        \item For all $\psi \in L^q((0, T); C_c(\R^d))$, the process $t \mapsto \int_{\R^d} \psi(x, t)\, \mu_t(dx)$ is progressively measurable.
        \item The process $t \mapsto \int_{\R^d} \vartheta\, d\mu_t$ is a continuous $\F_t$-\semimartingale and satisfies
        \begin{equation} \label{eq:stoch_continuity_weak}
            \begin{aligned}
                \int_{\R^d} \vartheta\, \mu_t(dx) & = \int_{\R^d} \vartheta\, \mu_\initial(dx) + \int_0^t \int_{\R^d} b_t \cdot \nabla \vartheta\, \mu_r(dx) dr \\
                & \quad + \int_0^t \biggl(\int_{\R^d} \nabla \vartheta\, \mu_r(dx) \biggr) \circ dW_r
            \end{aligned}
        \end{equation}
        for all $t \in [0, T]$ and $\vartheta \in C_c^\infty(\R^d)$, $\Pas$.
        \end{enumerate}
\end{definition}

Note that the product $b\mu$ in \eqref{eq:stoch_continuity_weak} is well-defined. As before, the SCE can be written in Itô form using the formula
\begin{equation*}
    \int_0^t \biggl(\int_{\R^d} \nabla \vartheta\, \mu_r(dx) \biggr) \circ dW_r = \int_0^t \biggl(\int_{\R^d} \nabla \vartheta\, \mu_r(dx) \biggr) dW_r + \int_0^t \int_{\R^d} \Delta \vartheta\, \mu_r(dx) dr,
\end{equation*}
which can be derived from \eqref{eq:stoch_continuity_weak} with test functions $\partial_i \vartheta$, $i = 1, ..., d$. 

As in the deterministic case, there is a natural duality between solutions of the STE and the SCE. To prove this, we will perform calculations on mollified versions of $\BV_\loc$-solutions of the STE and weak solutions of the SCE, and then pass to the limit. The next lemma will be useful for the latter.

\begin{lemma} \label{lemma:mollification_convergence}
    Assume that $f \in L^1((0, T); C^0_c(\R^d))$ and let $(\nu_t)_{t \in [0, T]}$ be a family of measures in $\meas_\loc(\R^d)$ with $\int_{0}^{T} \int_{\R^d} \vartheta(x)\, |\nu_t|(dx)dt < \infty$ for all $\vartheta \in C^0_c(\R^d)$. Let $f^\varepsilon = (f * \rho^\varepsilon)$ and $\nu^\varepsilon = (\nu * \rho^\varepsilon)$ for a family of standard mollifiers $(\rho^\varepsilon)_{\varepsilon > 0}$ in $C^{\infty}_c(\R^d)$. Then
    \begin{equation*}
        \lim_{\varepsilon \to 0} \int_0^t \int_{\R^d} f_r^\varepsilon(x) \nu_r^\varepsilon(x)\, dx dr = \int_0^t \int_{\R^d} f_r(x)\, \nu_r(dx) dr
    \end{equation*}
    for all $t \in [0, T]$.
\end{lemma}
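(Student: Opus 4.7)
My plan is to use the symmetry of the mollifier $\rho^\varepsilon$ to transfer one of the two convolutions from $\nu^\varepsilon$ onto $f^\varepsilon$, thereby reducing the quantity $\int f^\varepsilon_r \nu^\varepsilon_r\, dx$ to a pairing between a doubly-mollified $f$ and the measure $\nu_r$ itself. Pointwise convergence in the time variable, combined with dominated convergence in $r$, will then conclude the argument.

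First, since $\rho^\varepsilon$ is symmetric ($\rho^\varepsilon(-\cdot) = \rho^\varepsilon(\cdot)$), Fubini's theorem yields, for each fixed $r \in (0, T)$,
\begin{equation*}
\int_{\R^d} f^\varepsilon_r(x)\, \nu^\varepsilon_r(x)\, dx = \int_{\R^d} \biggl(\int_{\R^d} f^\varepsilon_r(x) \rho^\varepsilon(x - y)\, dx \biggr)\, \nu_r(dy) = \int_{\R^d} \tilde{f}^\varepsilon_r(y)\, \nu_r(dy),
\end{equation*}
where $\tilde{f}^\varepsilon_r := f_r * \rho^\varepsilon * \rho^\varepsilon$ is the double mollification of $f_r$. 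Fubini applies because the integrand is bounded and compactly supported in $(x, y)$ for each fixed $r$.

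Second, for a.e.\ $r \in (0, T)$ we have $f_r \in C^0_c(\R^d)$, so the standard theory of mollifiers gives $\tilde{f}^\varepsilon_r \to f_r$ uniformly on $\R^d$ as $\varepsilon \to 0$, with $\|\tilde{f}^\varepsilon_r\|_{C^0_x} \leq \|f_r\|_{C^0_x}$ and $\mathrm{supp}(\tilde{f}^\varepsilon_r)$ contained in a fixed compact $K$ for all $\varepsilon$ small enough. Consequently, $\int_{\R^d} \tilde{f}^\varepsilon_r\, d\nu_r \to \int_{\R^d} f_r\, d\nu_r$ pointwise in $r$, with the uniform majorant $\bigl|\int \tilde{f}^\varepsilon_r\, d\nu_r\bigr| \leq \|f_r\|_{C^0_x} \int_K d|\nu_r|$.

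Finally, dominated convergence in $r$ over $(0, t)$ yields the stated limit, provided the majorant $r \mapsto \|f_r\|_{C^0_x} \int_K d|\nu_r|$ belongs to $L^1(0, T)$. This is the main technical point: such integrability is needed implicitly for the right-hand side of the limit to be well-defined, and in the applications of the lemma it holds automatically because the total variation of $\nu_r$ on compact sets is essentially bounded in $r$ (cf.\ \eqref{eq:bounded_bv}), which combined with $\|f\|_{L^1_t C^{0}_x} < \infty$ gives the desired $L^1_t$ bound. The rest of the argument is routine.
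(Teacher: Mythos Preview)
The paper does not give a full proof of this lemma; it only notes that the result is ``relatively standard, combining the uniform convergence of $f^\varepsilon$ to $f$ and the locally weak-$*$ convergence of $\nu^\varepsilon$ to $\nu$.'' Your argument is correct and is a mild repackaging of that hint: instead of splitting $\int f^\varepsilon_r \nu^\varepsilon_r\,dx - \int f_r\,d\nu_r$ into a ``uniform convergence'' piece and a ``weak-$*$'' piece, you use the symmetry of $\rho^\varepsilon$ and Fubini to rewrite the pairing as $\int (f_r * \rho^\varepsilon * \rho^\varepsilon)\,d\nu_r$ and then invoke only uniform convergence of the doubly mollified $f$. This is slightly more direct, since it absorbs the weak-$*$ step into the Fubini identity rather than stating it separately; the paper's route has the minor advantage of citing a named convergence mode for the measure side. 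Either way the final passage to the limit in $r$ requires the same dominated-convergence majorant, and your remark that the hypotheses as stated do not obviously furnish an $L^1_t$ bound on $r \mapsto \|f_r\|_{C^0_x}\,|\nu_r|(K_r)$ --- but that the applications supply it via \eqref{eq:bounded_bv} --- is accurate and applies equally to the paper's sketch.
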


The proof of the above lemma is relatively standard, combining the uniform convergence of $f^\varepsilon$ to $f$ and the locally weak-$*$ convergence of $\nu^\varepsilon$ to $\nu$ (see e.g.~\cite{ambrosio_fusco_pallara_2000} on local weak-$*$ convergence of measures). We are now able to prove the duality principle when one of the solutions of the STE and the SCE is sufficiently regular.

\begin{lemma} \label{lemma:duality}
    Assume that $b$ is a velocity field which satisfies \eqref{eq:h1}. Let $u$ be a $\BV_{\loc}$-solution of the STE \eqref{eq:stoch_transport_strong} with $u_\initial \in \BV_{\loc}(\R^d)$, and $\mu$ a weak solution of the SCE \eqref{eq:stoch_continuity_strong} with $\mu_\initial \in \radon_{\loc}(\R^d)$. Assume that $\P$-a.s., either
    \begin{equation*} \label{eq:duality_conds}
        \text{(i)}\quad u \in L^1((0, T); C^1_c(\R^d)) \qquad \text{or} \qquad (ii) \quad \mu \in L^1((0, T); C^0_c(\R^d)),
    \end{equation*}
    where the last condition should be understood as $\mu$ having a compactly supported, continuous density with respect to the Lebesgue measure on $\R^d$. Then
    \begin{equation} \label{eq:duality}
        \int_{\R^d} u_t(x) \mu_t(dx) = \int_{\R^d} u_\initial(x) \mu_\initial(dx), \qquad t \in [0, T]
    \end{equation}
    $\P$-a.s.
\end{lemma}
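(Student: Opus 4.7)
The plan is to regularize both $u$ and $\mu$ by convolution with a symmetric standard mollifier $\rho^\varepsilon$ on $\R^d$, apply Itô's product formula to the smooth regularizations at each fixed $x$, integrate in $x$, and pass to the limit $\varepsilon \to 0$, using the extra regularity assumption to eliminate the resulting DiPerna--Lions type commutator.

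Set $u_t^\varepsilon(x) := (u_t * \rho^\varepsilon)(x)$ and $\mu_t^\varepsilon(x) := (\rho^\varepsilon * \mu_t)(x)$. Using $y \mapsto \rho^\varepsilon(x-y)$ as a test function in \eqref{eq:stoch_transport_weak} and \eqref{eq:stoch_continuity_weak} (after a routine density argument accommodating $x$ as a parameter), one sees that for every $x$ the processes $t \mapsto u_t^\varepsilon(x)$ and $t \mapsto \mu_t^\varepsilon(x)$ are continuous $\F_t$-\semimartingales. Converting the Stratonovich integrals to Itô form yields a $\tfrac{1}{2}\Delta$-correction, and
\begin{align*}
    du_t^\varepsilon(x) & = \bigl[-(b_t \cdot \nabla u_t)^\varepsilon(x) + \tfrac{1}{2} \Delta u_t^\varepsilon(x)\bigr]\, dt - \nabla u_t^\varepsilon(x) \cdot dW_t, \\
    d\mu_t^\varepsilon(x) & = \bigl[-\nabla \cdot (b_t \mu_t)^\varepsilon(x) + \tfrac{1}{2} \Delta \mu_t^\varepsilon(x)\bigr]\, dt - \nabla \mu_t^\varepsilon(x) \cdot dW_t,
\end{align*}
where $(\,\cdot\,)^\varepsilon$ denotes convolution with $\rho^\varepsilon$ (applied to a measure or a pointwise product as appropriate). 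Applying the Itô product rule pointwise in $x$ (the joint quadratic variation equals $\nabla u^\varepsilon \cdot \nabla \mu^\varepsilon\, dt$), integrating in $x$ by stochastic Fubini, and performing integration by parts, the second-order terms collapse via $\int \Delta(u^\varepsilon \mu^\varepsilon)\, dx = 0$ and the stochastic integrals via $\int \nabla(u^\varepsilon \mu^\varepsilon)\, dx = 0$. What remains is the identity
\begin{equation*}
    \int u_t^\varepsilon \mu_t^\varepsilon\, dx - \int u_\initial^\varepsilon \mu_\initial^\varepsilon\, dx = -\int_0^t \int \bigl(u^\varepsilon \nabla \cdot (b \mu)^\varepsilon + \mu^\varepsilon (b \cdot \nabla u)^\varepsilon \bigr)\, dx\, dr.
\end{equation*}

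For the limit $\varepsilon \to 0$, Lemma~\ref{lemma:mollification_convergence} together with the extra regularity of $u$ or $\mu$ yields convergence of the left-hand side to $\int u_t\, d\mu_t - \int u_\initial\, d\mu_\initial$. On the right-hand side, integration by parts against the smooth $u^\varepsilon$ and Fubini give, using the symmetry of $\rho^\varepsilon$,
\begin{equation*}
    \int u^\varepsilon \nabla \cdot (b \mu)^\varepsilon dx = -\int b \cdot \nabla u^{2\varepsilon}\, d\mu, \qquad \int \mu^\varepsilon (b \cdot \nabla u)^\varepsilon dx = \int b \mu^{2\varepsilon} \cdot \nabla u(dy),
\end{equation*}
with $u^{2\varepsilon} := u * \rho^\varepsilon * \rho^\varepsilon$ and $\mu^{2\varepsilon} := \mu * \rho^\varepsilon * \rho^\varepsilon$. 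In case (i), $\nabla u^{2\varepsilon} \to \nabla u$ uniformly and $\mu^{2\varepsilon}\, dy$ converges narrowly to $\mu$ on $\supp \nabla u$; in case (ii), $\mu^{2\varepsilon} \to \mu$ uniformly and $\nabla u^{2\varepsilon}\, dy$ converges narrowly to $\nabla u$ on $\supp \mu$. In both cases the two summands converge to the common value $\int b \cdot \nabla u\, d\mu$ with opposite signs, so the time integrand tends to zero and \eqref{eq:duality} follows.

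The main obstacle is securing the uniform-in-$\varepsilon$ bounds required for dominated passage inside $\int_0^t$ and for the stochastic Fubini step; without extra regularity this would amount to a genuine DiPerna--Lions commutator estimate, which under the merely Hölder regularity \eqref{eq:h1} of $b$ is delicate. The hypothesis that one of $u, \mu$ is actually $C^1_c$ or $C^0_c$ supplies the missing uniform control on one factor and reduces the commutator analysis to a soft continuity argument built from Lemma~\ref{lemma:mollification_convergence} and standard properties of mollification.
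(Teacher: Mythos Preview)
Your proposal is correct and follows essentially the same route as the paper's proof: mollify both solutions, apply the product rule, integrate over $\R^d$ so that the stochastic terms cancel via integration by parts (using the compact support furnished by (i) or (ii)), and then let $\varepsilon \to 0$ so the two remaining drift terms cancel. The paper stays in Stratonovich form throughout and invokes Lemma~\ref{lemma:mollification_convergence} directly on the pair $\int (b\cdot\nabla u)^\varepsilon \mu^\varepsilon\,dx$ and $\int \nabla u^\varepsilon \cdot (b\mu)^\varepsilon\,dx$ rather than your Fubini rewrite with $u^{2\varepsilon}$, $\mu^{2\varepsilon}$, but these are cosmetic differences.
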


\begin{proof}
    Let $(\rho^\varepsilon)_{\varepsilon > 0}$ be a family of standard mollifiers in $C^{\infty}_c(\R^d)$. Since $u$ and $\mu$ are solutions of \eqref{eq:stoch_transport_weak} and \eqref{eq:stoch_continuity_weak}, the mollified functions ${u_t^\varepsilon(x) = (u_t * \rho^\varepsilon)(x)}$ and $\mu_t^\varepsilon = \int_{\R^d} \rho^\varepsilon(x-y)\, d\mu_t(y)$ are continuous $\F_t$-\semimartingales and satisfy
    \begin{equation*} \label{eq:mollified_equations}
        \begin{aligned}
            & u^\varepsilon_t = u_\initial^\varepsilon + \int_0^t (b_r \cdot \nabla u_r)^\varepsilon\, dr + \int_{0}^{t}\nabla u_r^\varepsilon\circ dW_r, \\
            & \mu_t^\varepsilon = \mu_\initial^\varepsilon + \int_0^t \nabla \cdot (b_r \mu_r)^\varepsilon\, dr + \int_0^t \nabla \mu_r^\varepsilon\circ dW_r
        \end{aligned}
    \end{equation*}
    for all $t \in [0, T]$, $\Pas$, where the other terms are defined similarly as convolutions against~$\rho^\varepsilon$. Itô's formula for the product gives
    \begin{equation*}
        \begin{aligned}
            u_t^\varepsilon \mu_t^\varepsilon & = u_\initial^\varepsilon \mu_\initial^\varepsilon + \int_0^t (b_r \cdot \nabla u_r)^\varepsilon \mu_r^\varepsilon \, dr + \int_0^t u_r^\varepsilon \nabla \cdot (b_r \mu_r)^\varepsilon\, dr \\
            & \quad + \int_{0}^{t}\nabla u_r^\varepsilon \mu_r^\varepsilon \circ dW_r + \int_0^t u_r^\varepsilon \nabla \mu_r^\varepsilon\circ dW_r.
        \end{aligned}
    \end{equation*}
    Assume first that condition (i) holds. Then $u_t^\varepsilon$ has compact support in $\R^d$ for a.e.~$t \in [0, T]$, and integrating the product $u^\varepsilon \mu^\varepsilon$ yields
    \begin{equation} \label{eq:mollified_duality}
        \begin{aligned}
            \int_{\R^d} u_t^\varepsilon \mu_t^\varepsilon\, dx & = \int_{\R^d} u_\initial^\varepsilon \mu_\initial^\varepsilon\, dx + \int_0^t \int_{\R^d} (b_r \cdot \nabla u_r)^\varepsilon \mu_r^\varepsilon\, dx dr \\
            & \quad - \int_0^t \int_{\R^d} \nabla u_r^\varepsilon \cdot (b_r \mu_r)^\varepsilon\, dx dr,
        \end{aligned}
    \end{equation}
    where we have changed the order of integration and then used integration by parts to cancel the stochastic integrals. Choose $t \in [0, T]$ such that $\mu_t \in \meas_{\loc}(\R^d)$ and $u_t \in C^1_c(\R^d)$ $\P$-a.s. Then $\mu^\varepsilon_t$ converges locally weak-$*$ to $\mu_t$, and $u_t^\varepsilon$ converges uniformly to $u_t$ on $\R^d$, so we get
    \begin{equation*}
        \lim_{\varepsilon \to 0} \int_{\R^d} u_t^\varepsilon(x) \mu_t^\varepsilon(x)\, dx = \int_{\R^d} u_t(x) \mu_t(dx), \qquad t \in [0, T]
    \end{equation*}
    $\P$-a.s. By Lemma \ref{lemma:mollification_convergence} the last two terms in \eqref{eq:mollified_duality} both converge $\P$-a.s.~to 
    \begin{equation*}
        \int_0^t \int_{\R^d} (b_r \cdot \nabla u_r)\, \mu_r(dx)dr,
    \end{equation*}
    which means that in the limit we are left with \eqref{eq:duality} for all $t \in [0, T]$, $\Pas$. 

    If condition (ii) holds instead of (i), a similar argument with $\nabla u$ taking the role of the measure yields the same conclusion.
\end{proof}

At this point, we have all the necessary ingredients to prove Theorem~\ref{thm:main_transport}.

\begin{proof}[Proof of Theorem~\ref{thm:main_transport}]
    We first claim that the function $u_t(x) = u_\initial(X_t^{-1}(x))$ is a solution of \eqref{eq:stoch_transport_strong} in the sense of Definition~\ref{def:stoch_transport_weak}, where $X$ is the stochastic flow of $C^{1, \beta}$-\diffeomorphisms from Theorem~\ref{thm:main_flow}. Note that by a change of variables,
    \begin{equation} \label{eq:transport_substitution}
        \int_{\R^d} \vartheta(x) u_t(x)\, dx = \int_{\R^d} \vartheta(X_t(x)) \det(\nabla X_t(x)) u_\initial(x)\, dx
    \end{equation}
    holds for all $\vartheta \in C_c(\R^d)$, $\P$-a.s., from which we infer that $t \mapsto \int_{\R^d} \vartheta(x) u_t(x)\, dx$ is a continuous $\F_t$-semimartingale. Moreover, from the identity
    \begin{equation} \label{eq:transport_derivative_substitution}
            \int_{\R^d} \vartheta(x)\, \partial_i u_t(dx) = \int_{\R^d} \vartheta(X_t(x)) \det(\nabla X_t(x)) \bigl(\partial_i X_t^{-1}\bigr)(X_t(x)) \cdot \nabla u_\initial(dx)
    \end{equation}
    (see \cite{flandoli_gubinelli_priola}) we obtain
    \begin{equation*}
        \begin{aligned}
            \int_{\R^d} \vartheta(x)\, |\partial_i u_t|(dx) & \leq \sup_{\substack{\psi \in C_c(\R^d)\\ |\psi| \leq 1}} \int_{\R^d} \vartheta(x) \psi(x)\, \partial_i u_t(dx) \\
            & \leq \int_{\R^d} |\vartheta(X_t(x))| \det(\nabla X_t(x)) \bigl|\bigl(\partial_i X_t^{-1}\bigr)(X_t(x))\bigr| |\nabla u_\initial|(dx)
        \end{aligned}
    \end{equation*}
    for $i = 1,..., d$, which proves \eqref{eq:bounded_bv}.

    We show that $u_\initial(X_t^{-1}(x))$ satisfies equation \eqref{def:stoch_transport_weak}. Let $u_\initial^\delta  = (u_\initial * \rho^\delta)$ for a family of standard mollifiers $(\rho^\delta)_{\delta > 0}$, and let furthermore $X^\varepsilon$ be the flow generated by the mollified drift $b^\varepsilon = (b * \rho^\varepsilon)$, where $(\rho^\varepsilon)_{\varepsilon > 0}$ is another family of standard mollifiers. Then a classical result (see \cite[Chap.~4.4]{kunita_1990}) is that the function ${u^{\delta, \varepsilon} = u_\initial^\delta \bigl((X_t^{\varepsilon})^{-1}(x)\bigr)}$ satisfies
    \begin{equation} \label{eq:approximate_ste}
        \int_{\R^d} \vartheta u^{\delta, \varepsilon}_t\, dx = \int_{\R^d} \vartheta u^{\delta}_0\, dx + \int_0^t \int_{\R^d} \vartheta b_r^\varepsilon \cdot \nabla u_r^{\delta, \varepsilon}\, dx dr + \int_0^t \biggl(\int_{\R^d} \vartheta \nabla u^{\delta, \varepsilon}_r\, dx \biggr)\circ dW_r
    \end{equation} 
    for all $\vartheta \in C^\infty_c(\R^d)$ and $t \in [0, T]$, $\P$-a.s. In short, this can be obtained using the forward formula
    \begin{equation*} \label{eq:backward_flow_forward_formula}
        (X_{t}^\varepsilon)^{-1}(x) = x - \int_0^t \nabla (X_{r}^\varepsilon)^{-1}(x) b_r^\varepsilon(x)\, dr - \int_0^t \nabla (X_{r}^\varepsilon)^{-1}(x) \circ dW_r
    \end{equation*}
    for the backward flow $(X_t^\varepsilon)^{-1}$, combined with Ito's formula for the composition $u_\initial \circ (X_t^{\varepsilon})^{-1}$. The identities \eqref{eq:transport_substitution} and \eqref{eq:transport_derivative_substitution} imply that we can pass $\delta \to 0$ in \eqref{eq:approximate_ste} and get
    \begin{equation} \label{eq:approximate_ste_2}
        \int_{\R^d} \vartheta u^{\varepsilon}_t\, dx = \int_{\R^d} \vartheta u_\initial\, dx + \int_0^t \int_{\R^d} \vartheta b_r^\varepsilon \cdot \nabla u_r^{\varepsilon}\, dx dr + \int_0^t \biggl(\int_{\R^d} \vartheta \nabla u^{\varepsilon}_r\, dx \biggr)\circ dW_r,
    \end{equation}
    due to convergence of $u_\initial^\delta$ to $u_\initial$ for a.e.~$x \in \R^d$ and local weak-$*$ convergence of $\nabla u_\initial^\delta$ to $\nabla u_\initial$. It remains to show that each term in \eqref{eq:approximate_ste_2} converges as $\varepsilon \to 0$. Since for all $t \in [0, T]$, the mollified inverse flow $(X_t^\varepsilon)^{-1}$ converges $\P$-a.s.~locally uniformly in a subsequence to $X_t^{-1}$ by Corollary~\ref{cor:flow_uniform_convergence}, we have
    \begin{equation*}
        \begin{aligned}
            \lim_{\varepsilon_n \to 0} \int_{\R^d} \vartheta u^{\varepsilon_n}_t\, dx & = \lim_{\varepsilon_n \to 0} \int_{\R^d} \vartheta(x) u_\initial((X_t^{\varepsilon_n})^{-1}(x))\, dx \\
            & = \int_{\R^d} \vartheta(x) u_\initial(X_t^{-1}(x))\, dx = \int_{\R^d} \vartheta u_t\, dx
        \end{aligned}
    \end{equation*}
    for all $t \in [0, T]$, $\P$-a.s., by dominated convergence. Moreover, we have that for all $t \in [0, T]$, $\nabla u_t^\varepsilon$ converges in a subsequence $\P$-a.s.~locally weak-$*$ to $\nabla u_t$ on $\R^d$. Indeed,
    \begin{equation*}
        \begin{aligned}
            & \lim_{\varepsilon_n \to 0} \int_{\vartheta} \vartheta(x)\, \partial_i u_t^{\varepsilon_n}(dx) \\
            & \quad = \lim_{\varepsilon_n} \int_{\R^d} \vartheta(X_t^{\varepsilon_n}(x)) \det(\nabla X_t^{\varepsilon_n}(x)) \bigl(\partial_i (X_t^{\varepsilon_n})^{-1}\bigr)(X_t^{\varepsilon_n})\cdot \nabla u_\initial(dx)\\
            & \quad = \int_{\R^d} \vartheta(X_t(x)) \det(\nabla X_t(x)) \bigl(\partial_i X_t^{-1}\bigr)(X_t)\cdot \nabla u_\initial(dx) = \int_{\vartheta} \vartheta(x)\, \partial_i u_t(dx)
        \end{aligned}
    \end{equation*}
    for all $\vartheta \in C^\infty_c(\R^d)$ and $i = 1, ..., d$, also by Corollary~\ref{cor:flow_uniform_convergence}. We infer that the last two terms of \eqref{eq:approximate_ste_2} converge along a subsequence, so that we are left with \eqref{def:stoch_transport_weak}.

    Suppose that $u$ is any $\BV_\loc$-solution of \eqref{eq:stoch_transport_strong} for given $u_\initial \in \BV_\loc(\R^d)$. $\P$-a.s.~for all $\vartheta \in C_c(\R^d)$, we then have
    \begin{equation*}
        \int_{\R^d} \vartheta(x) u_t(X_t(x))\, dx = \int_{\R^d} u_t(x)\, (X_t)_\# \vartheta(dx) = \int_{\R^d} \vartheta(x) u_\initial(x)\, dx
    \end{equation*}
    for all $t \in [0, T]$ by Lemma \ref{lemma:duality}, since $(X_t)_\# \vartheta$ is a weak solution of the SCE \eqref{eq:stoch_continuity_weak} which $\P$-a.s.~belongs to $L^1((0, T); C^0_c(\R^d))$. This implies that $u_t(x) = u_\initial(X_t^{-1}(x))$ for a.e.~$x \in \R^d$ and all $t \in [0, T]$, $\P$-a.s., and thereby uniqueness.
\end{proof}

\begin{remark}
    The fact that $u_t(x) = u_\initial(X_t^{-1}(x))$ is a $\BV_\loc$-solution to the STE can be verified without smoothing the initial data and the flow, using \eqref{eq:transport_substitution} and the formulas for $X_t(x)$ and $\nabla X_t(x)$. However, this requires more calculation, and is not necessary when stability of the backward flow with respect to smoothing of the drift is known.
\end{remark}

We finally prove Theorem~\ref{thm:main_continuity}, where uniqueness will be defined as
\begin{equation} \label{eq:uniqueness_continuity}
    \P\biggl(\int_{\R^d} \vartheta\, d\mu_t = \int_{\R^d} \vartheta\ d\nu_t\ \text{for all}\ \vartheta \in C^\infty_c(\R^d)\ \text{and all}\ t \in [0, T]\biggr) = 1
\end{equation}
for any two weak solutions $\mu$ and $\nu$.

\begin{proof}[Proof of Theorem~\ref{thm:main_continuity}]
    We claim that the measure $\mu_t = (X_t)_\# \mu_\initial$ is a weak solution of the SCE \eqref{eq:stoch_continuity_strong}. Since
    \begin{equation} \label{eq:pushforward}
        \int_{\R^d} \vartheta(x)\, \mu_t(dx) = \int_{\R^d} \vartheta(X_t(x))\, \mu_\initial(dx),
    \end{equation}
    the process $t \mapsto \int_{\R^d} \vartheta\, d\mu_t$ is a continuous $\F_t$-\semimartingale, and applying Itô's formula on the right-hand side of \eqref{eq:pushforward}, we see that $\mu_t$ indeed satisfies \eqref{eq:stoch_continuity_weak}. Properties (i) and (ii) of Definition \ref{def:stoch_continuity_weak} can be checked using the pushforward formula for the solution.

    Let now $\mu$ be a any weak solution of \eqref{eq:stoch_continuity_strong}. Then
    \begin{equation*}
        \int_{\R^d} \vartheta(x)\, (X_t^{-1})_{\#} \mu_t (dx) = \int_{\R^d} \vartheta(X_t^{-1})\, \mu_t (dx) = \int_{\R^d} \vartheta(x)\, \mu_\initial(dx)
    \end{equation*}
    by Lemma \ref{lemma:duality}, since $\vartheta(X_t^{-1}(x))$ is a $\BV_\loc$-solution of the STE which is also $\P$-a.s.~in $L^1((0, T); C^1_c(\R^d))$. This proves the representation formula \eqref{eq:continuity_formula}, and thus uniqueness in the sense of \eqref{eq:uniqueness_continuity}.
\end{proof}

\appendix

\section{A Gronwall-type inequality} \label{appendix:gronwall}

When proving a priori regularity for solutions of the parabolic PDE \eqref{eq:model_eq_classical}, we would like to use a quantitative Gronwall-type inequality for time-dependent functions $u$ which satisfies an integral inequality
\begin{equation} \label{eq:generic_integral_inequality_1}
    u(t) \leq f(t) + \int_0^t g(t-s) u(s)\, ds
\end{equation}
on $(0, T)$, where $f$ and $g$ are nonnegative functions. In \cite[Lemma 3.1]{fedrizzi_flandoli_2013} it is claimed that \eqref{eq:generic_integral_inequality_1} implies
\begin{equation} \label{eq:wrong_inequality}
    u(t) \leq f(t) + \int_0^t f(s) g(t-s) \exp\biggl(\int_s^t g(t-r)\, dr \biggr)\, ds
\end{equation}
on $(0, T)$, given that $g \in L^1(0, T)$ and $g u \in L^1(0, T)$. (Their result is written backward in time, but can easily be converted to the above by a change of variables). This claim is not true in general, as the following example shows: If
\begin{equation*}
    u(t) = 1+t, \qquad f(t) = 1,\qquad g(t) = e^{-t},
\end{equation*}
then $u(t)$ satisfies \eqref{eq:generic_integral_inequality_1} with equality, but the right-hand side of \eqref{eq:wrong_inequality} is
\begin{equation*}
    1 + \int_0^t e^{-(t-s)} \exp\biggl(\int_s^t e^{-(t-r)}\, dr\biggr)\, ds = e^{1-e^{-t}} < 1 + t
\end{equation*}
for all $t > 0$. The inequality \eqref{eq:wrong_inequality} would be valid if one in addition assumed $g$ to be nondecreasing, but this is not the case for terms of the form $1/t^\beta$ typically appearing in heat kernel estimates.

To correct this situation, we prove a slightly different, more general result.

\begin{proposition} \label{prop:gronwall}
    Assume that $u \in L^\infty(0, T)$ satisfies the integral inequality
    \begin{equation} \label{eq:generic_integral_inequality_2}
        u(t) \leq f(t) + \int_0^t g(t, s) h(s) u(s)\, ds
    \end{equation}
    for a.e.~$t \in (0, T)$, where $f, g, h$ are nonnegative functions such that $f \in L^\infty(0, T)$,
    \begin{equation*}
        \sup_{t \in (0, T)} \norm{g(t, \cdot)}_{L^p(0, t)} < \infty
    \end{equation*}
    and $h \in L^q(0, T)$, for $\nicefrac{1}{p} + \nicefrac{1}{q} = 1$ with $p, q \in (1, \infty)$. Then for a.e.~$t \in (0, T)$, there is a constant
    \begin{equation*}
        \gronwconst = \gronwconst\biggl(\sup_{s \in (0, t)} \norm{g(s, \cdot)}_{L^p(0, s)}, \norm{h}_{L^q(0, t)}\biggr) > 0
    \end{equation*}
    which is continuous and nondecreasing in the two arguments, such that
    \begin{equation} \label{eq:modified_gronwall}
        u(t) \leq \gronwconst \norm{f}_{L^\infty(0, t)}.
    \end{equation}
\end{proposition}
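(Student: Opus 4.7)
The plan is to prove the bound by a local-in-time iteration that sidesteps the shortcoming of \eqref{eq:wrong_inequality} discussed above. I would first replace $u$ by $u \vee 0$ so as to assume $u \geq 0$; this is harmless since the right-hand side of \eqref{eq:generic_integral_inequality_2} is monotone in $u$ and the stated bound on $u$ is one-sided. Fix $t \in (0, T)$ and set $K := \sup_{s \in (0, t)} \norm{g(s, \cdot)}_{L^p(0, s)}$, $H := \norm{h}_{L^q(0, t)}$, and $F := \norm{f}_{L^\infty(0, t)}$, assuming $K, H > 0$ (otherwise the claim is trivial).

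The central estimate I would use is Hölder's inequality: for any $0 \leq \tau_1 \leq \tau_2 \leq t$,
\begin{equation*}
    \int_{\tau_1}^{\tau_2} g(\tau_2, s) h(s) u(s)\, ds \leq K \norm{h}_{L^q(\tau_1, \tau_2)} \esssup_{s \in (\tau_1, \tau_2)} u(s),
\end{equation*}
so that the prefactor in front of $u$ can be made small on sub-intervals where $\norm{h}_{L^q}$ is small. Using that $s \mapsto \int_0^s h(r)^q\, dr$ is continuous and nondecreasing, I would partition $[0, t]$ into $N := \lceil (2KH)^q \rceil$ equal pieces $0 = t_0 < t_1 < \cdots < t_N = t$ in the sense that $\int_{t_i}^{t_{i+1}} h(r)^q\, dr = H^q/N$, which gives $\norm{h}_{L^q(t_i, t_{i+1})} = H N^{-1/q} \leq 1/(2K)$ for every $i$. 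Writing $M_i := \esssup_{s \in (0, t_i)} u(s)$ and splitting the integral in \eqref{eq:generic_integral_inequality_2} as $\int_0^{t_i} + \int_{t_i}^\tau$ for $\tau \in (t_i, t_{i+1})$, the first piece is bounded by $KH M_i$ and the second by $\tfrac{1}{2} M_{i+1}$; combining with $M_{i+1} \geq M_i$ yields the recursion $M_{i+1} \leq 2F + 2KH M_i$, and a straightforward induction from $M_0 = 0$ gives $M_N \leq 2F \sum_{j=0}^{N-1} (2KH)^j$. This proves \eqref{eq:modified_gronwall} with, for instance,
\begin{equation*}
    \gronwconst(K, H) := 2 \bigl(1 + 2KH\bigr)^{(2KH)^q + 1},
\end{equation*}
which is continuous and nondecreasing in its two arguments, as required.

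The main subtlety, which is precisely the one exposed by the counterexample to \eqref{eq:wrong_inequality}, is that one cannot hope to exponentiate a convolution of $g$, since $g$ need not be monotone in time. The partition trick bypasses this by replacing a single global iteration with $N \sim (KH)^q$ iterations on short intervals on which $\norm{h}_{L^q}$ can be absorbed into $\tfrac{1}{2} M_{i+1}$, at the cost of a factor $(2KH)^N$ in the final constant. The remainder is routine bookkeeping: existence of the equal-partition follows from continuity of $s \mapsto \int_0^s h(r)^q\, dr$, and the continuity and monotonicity of $\gronwconst$ are read off immediately from the explicit formula above.
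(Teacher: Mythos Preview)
Your proof is correct and takes a genuinely different route from the paper's. The paper iterates \eqref{eq:generic_integral_inequality_2} globally, writing $u \leq \sum_{n=0}^N \mathcal{I}^n[f] + \mathcal{I}^{N+1}[u]$ for the integral operator $\mathcal{I}[\phi](t) = \int_0^t g(t,s)h(s)\phi(s)\,ds$, and then shows by induction that $\mathcal{I}^n[f](t) \leq (n!)^{-1/q} K^n H^n \norm{f}_{L^\infty(0,t)}$. The factorial gain comes from the identity $\int_0^t h^q(s)\norm{h}_{L^q(0,s)}^{nq}\,ds = \frac{1}{n+1}\norm{h}_{L^q(0,t)}^{(n+1)q}$, which is just the symmetry of the simplex; the resulting constant is the convergent series $\gronwconst = \sum_{n\geq 0}(n!)^{-1/q}(KH)^n$. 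Your local-in-time absorption argument avoids this combinatorial step entirely and is arguably more elementary and robust, at the price of a somewhat larger constant (roughly $\exp\bigl(c(KH)^q\log(KH)\bigr)$ versus the paper's $\exp\bigl(c(KH)^q\bigr)$). One small point worth making explicit: your recursion $M_{i+1}\leq 2F+2KHM_i$ follows cleanly because either $N=1$ (when $KH\leq 1/2$), in which case $M_0=0$ handles it directly, or $KH>1/2$, in which case the inequality $M_i \leq F + KHM_i + \tfrac{1}{2}M_{i+1}$ needed to cover $\tau\in(0,t_i)$ is automatic from $M_{i+1}\geq M_i$.
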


\begin{proof}
    Define the operator
    \begin{equation*}
        \mathcal{I}[f](t) \coloneqq \int_0^t g(t, s) h(s) f(s)\, ds,
    \end{equation*}
    which is clearly linear and nondecreasing in the first argument. Then \eqref{eq:generic_integral_inequality_2} can be written as $u(t) \leq f(t) + \mathcal{I}[u](t)$. Iterating this inequality $N$ times yields 
    \begin{equation} \label{eq:gronwall_expansion}
        u(t) \leq \sum_{n = 0}^N \mathcal{I}^n[f](t) + \mathcal{I}^{N+1}[u](t),
    \end{equation}
    where $\mathcal{I}^0[f] = f$ and $\mathcal{I}^n[f] = \underbrace{\mathcal{I} \circ ... \circ \mathcal{I}}_{n}[f]$. We claim that
    \begin{equation} \label{eq:iterated_kernel}
        \mathcal{I}^n[f](t) \leq \biggl(\frac{1}{(n!)^\frac{1}{q}} \sup_{s \in (0, t)} \norm{g(s, \cdot)}_{L^p(0, s)}^{n} \norm{h}_{L^q(0, t)}^{n}\biggr) \norm{f}_{L^\infty(0, t)}
    \end{equation}
    for all $n \geq 0$. If this is indeed the case, then we can pass $N \to \infty$ in \eqref{eq:gronwall_expansion}, and obtain the bound
    \begin{equation} \label{eq:gronwconst}
        u(t) \leq \underbrace{\biggl(\sum_{n = 0}^\infty \frac{1}{(n!)^\frac{1}{q}} \sup_{s \in (0, t)} \norm{g(s, \cdot)}_{L^p(0, s)}^{n} \norm{h}_{L^q(0, t)}^{n}\biggr)}_{\gronwconst} \norm{f}_{L^\infty(0, t)}.
    \end{equation}
    We prove \eqref{eq:iterated_kernel} by induction. Using H\"older's inequality, $\mathcal{I}^1[f]$ is clearly bounded by
    \begin{equation*}
        \begin{aligned}
            \mathcal{I}^1[f](t) & \leq \norm{g(t, \cdot)}_{L^p(0, t)} \norm{h}_{L^q(0, t)} \norm{f}_{L^\infty(0, t)} \\
            & \leq \sup_{s \in (0, t)} \norm{g(s, \cdot)}_{L^p(0, s)} \norm{h}_{L^q(0, t)} \norm{f}_{L^\infty(0, t)}.
        \end{aligned}
    \end{equation*}
    Assuming that \eqref{eq:iterated_kernel} holds, we have
    \begin{equation*} \label{eq:induction_step}
        \begin{aligned}
            \mathcal{I}_{n+1}(t) & \leq \frac{1}{(n!)^\frac{1}{q}} \biggl(\int_0^t g(t, s) h(s) \sup_{r \in (0, s)}\norm{g(r, \cdot)}_{L^p(0, r)}^n \norm{h}_{L^q(0, s)}^n\, ds\biggr) \norm{f}_{L^\infty(0, t)} \\
            & \leq \frac{1}{(n!)^\frac{1}{q}} \biggl(\int_0^t g^p(t, s) \sup_{r \in (0, s)}\norm{g(r, \cdot)}_{L^p(0, r)}^{np} \, ds\biggr)^\frac{1}{p} \\
            & \quad \times \biggl(\int_0^t h^q(s) \norm{h}_{L^q(0, s)}^{nq}\, ds \biggr)^\frac{1}{q} \norm{f}_{L^\infty(0, t)}
        \end{aligned}
    \end{equation*}
    For the second integral, we use the identity
    \begin{equation*}
        \begin{aligned}
            & \int_0^t h^q(s) \norm{h}_{L^q(0, s)}^{nq}\, ds = \int_0^t h^q(s) \biggl(\int_0^s h^q(r)\, dr\biggr)^n\, ds \\
            & = \int_0^t \int_0^s \cdots \int_0^s \bigl(h^q(s) h^q(r_1)...h^q(r_n) \bigr)\, dr_1...dr_n ds = \frac{1}{n+1} \biggl(\int_0^t h^q(s)\, ds\biggr)^{n+1},
        \end{aligned}
    \end{equation*}
    due to the integrand $h^q \cdot ... \cdot h^q$ being symmetric over the $(n+1)$-dimensional hyperplane $\{s = r_1 = ... = r_n\}$. For the first integral, we have
    \begin{equation*}
        \begin{aligned}
            \int_0^t g^p(t, s) \sup_{r \in (0, s)}\norm{g(r, \cdot)}_{L^p(0, r)}^{np} \, ds & \leq \sup_{s \in (0, t)} \norm{g(s, \cdot)}_{L^p(0, s)}^{np} \int_0^t g^p(t, s)\, ds \\
            & \leq \sup_{s \in (0, t)} \norm{g(s, \cdot)}_{L^p(0, s)}^{(n+1)p}.
        \end{aligned}
    \end{equation*}
    This concludes the induction step and thereby \eqref{eq:iterated_kernel}.
\end{proof}

For the special case $g(t, s) = g(t-s)$, the result can be simplified somewhat: Since in this case $\sup_{s \in (0, t)} \norm{g(s-\cdot)}_{L^p(0, s)} = \norm{g}_{L^p(0, t)}$, we obtain the following corollary.

\begin{corollary}
    Assume that $g(t, s) = g(t-s)$. Then \eqref{eq:modified_gronwall} holds with a constant $E = E(\norm{g}_{L^p(0, t)}, \norm{h}_{L^q(0, t)})$.
\end{corollary}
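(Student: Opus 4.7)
The plan is to apply Proposition \ref{prop:gronwall} directly, after observing that the convolution structure $g(t,s) = g(t-s)$ trivializes the supremum in the definition of the constant $E$. The key computation is a single change of variable: for fixed $s \in (0,t)$, setting $u = s - r$ gives
\begin{equation*}
    \norm{g(s, \cdot)}_{L^p(0, s)} = \biggl(\int_0^s g(s-r)^p\, dr\biggr)^{\nicefrac{1}{p}} = \biggl(\int_0^s g(u)^p\, du\biggr)^{\nicefrac{1}{p}} = \norm{g}_{L^p(0, s)}.
\end{equation*}

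Since $s \mapsto \norm{g}_{L^p(0, s)}$ is nondecreasing in $s$, taking the supremum over $s \in (0, t)$ yields
\begin{equation*}
    \sup_{s \in (0, t)} \norm{g(s, \cdot)}_{L^p(0, s)} = \norm{g}_{L^p(0, t)}.
\end{equation*}
Substituting this into the formula \eqref{eq:gronwconst} for the constant $\gronwconst$ from Proposition~\ref{prop:gronwall} gives precisely $\gronwconst = \gronwconst\bigl(\norm{g}_{L^p(0, t)}, \norm{h}_{L^q(0, t)}\bigr)$, and the inequality \eqref{eq:modified_gronwall} follows from the proposition.

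There is no real obstacle here; the corollary is essentially a notational simplification of the general proposition in the translation-invariant case. The only thing to be careful about is verifying that the hypotheses of Proposition \ref{prop:gronwall} are inherited: the function $(t,s) \mapsto g(t-s)$ is nonnegative whenever $g$ is nonnegative, and the uniform bound $\sup_{t \in (0, T)} \norm{g(t, \cdot)}_{L^p(0, t)} < \infty$ required by the proposition becomes simply $g \in L^p(0, T)$, which is implied by the finiteness of $\norm{g}_{L^p(0, t)}$ for $t$ up to $T$.
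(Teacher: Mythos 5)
Your proof is correct and matches the paper's argument: the paper also observes that $\sup_{s \in (0, t)} \norm{g(s-\cdot)}_{L^p(0, s)} = \norm{g}_{L^p(0, t)}$ via the same change of variables and monotonicity in $s$, then invokes Proposition~\ref{prop:gronwall}. Nothing further to add.
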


\bibliography{bibliography}
\bibliographystyle{abbrv}
\end{document}